\documentclass[a4paper,11pt]{amsart}

\usepackage[english]{babel}

\usepackage{amsmath,amsfonts,amssymb,amsthm,mathrsfs}
\usepackage{mathtools}
\usepackage{graphicx}
\usepackage{stackrel}
\usepackage{xcolor}
\usepackage[shortlabels]{enumitem}
\usepackage{esint}

\textwidth15cm \textheight23cm \flushbottom
\usepackage[capitalise, nameinlink, noabbrev]{cleveref} 

\numberwithin{equation}{section}
\newtheorem{theorem}{Theorem}[section]
\newtheorem{lemma}[theorem]{Lemma}
\newtheorem{proposition}[theorem]{Proposition}

\theoremstyle{definition}
\newtheorem{definition}[theorem]{Definition}
\newtheorem{remark}[theorem]{Remark}

\newcommand{\R}{\mathbb{R}}

\newcommand{\cc}{\mathbf{C}}
\newcommand{\res}{\mathop{\hbox{\vrule height 6pt width .5pt depth 0pt \vrule height .5pt width 6pt depth 0pt}}\nolimits}

\newcommand{\Fc}{\mathcal{F}}

\renewcommand{\H}{\mathcal{H}}
\newcommand{\e}{\varepsilon}
\newcommand{\de}{\delta}
\newcommand{\pa}{\partial}
\newcommand{\beq}{\begin{equation}}
\newcommand{\eeq}{\end{equation}}
\newcommand{\Om}{\Omega}

\newcommand{\wtos}{\stackrel{*}{\rightharpoonup}}

\newcommand{\medint}{-\kern  -,425cm\int}

\newcommand{\eps}{\varepsilon}



 \DeclarePairedDelimiter\abs{\lvert}{\rvert}      

\DeclareMathOperator{\Id}{Id}

\DeclareMathOperator{\Div}{div}

\DeclareMathOperator{\dist}{dist}



\title{Regularity of capillarity droplets with obstacle}

\author[G. De Philippis]{Guido De Philippis}
\address{(G. De Philippis) Courant Institute of Mathematical Sciences, New York University,
 New York, NY, USA}
\email{guido@cims.nyu.edu}

\author[N. Fusco]{Nicola Fusco}
\address{(N. Fusco) Dipartimento di Matematica e Applicazioni, 
Universit\`{a} degli Studi di Napoli ``Federico II'', Napoli, Italy}
\email{n.fusco@unina.it}

\author[M. Morini]{Massimiliano Morini}
\address{(M. Morini) Dipartimento di Scienze Matematiche Fisiche e Informatiche, Universit\`{a} degli Studi di Parma, Parma, Italy}
\email{massimiliano.morini@unipr.it}

\thanks{{\bf Acknowledgments.}
	The work of G.D.P is  partially supported by the NSF grant DMS 2055686 and by the Simons Foundation. The work of N.F. is supported by  the PRIN project 2017TEXA3H. M.M. is  supported by the University of Parma FIL grant ``Regularity, non-linear potential theory and related topics''. Part of this work has been carried out during various visits to the authors institutions, whose support is greatly acknowledged.}


\begin{document}
\begin{abstract} 
In this paper we study the regularity properties of $\Lambda$-minimizers of the capillarity energy in a half space with the wet part constrained to be confined inside a given planar region. Applications to a model for nanowire growth are also provided.
\end{abstract}

\maketitle

\tableofcontents

\section{Introduction}

 Capillarity phenomena occur whenever two or more fluids are situated adjacent each other and do not mix. The separating interface is usually refereed to as a capillary surface. Since the pioneering works by  Young and  Laplace (see  Finn's book \cite{Finnbook} for an historical introduction) these phenomena have been the subject of countless studies in the mathematical and interdisciplinary literature. A modern treatment of the problem is based on Gauss' idea of describing  equilibrium configurations as critical points or (local) minimizers of a free energy accounting for the area of the surface separating the fluids and the surrounding media, for the area of the {\it wet region} due to the  adhesion between the fluids and the walls of the container, and for the possible presence of external fields acting on the system (such as gravity). The existence of minimizing configurations is easily obtained in the framework of sets of finite perimeter. While the regularity inside the container of such configurations reduces to the more classical study of minimal surfaces, a more specific question is related to the regularity of the contact line between the container and the fluid, see \cite{Taylor77} for the physically relevant three dimensional case and \cite{De-PhilippisMaggi15} for a wide extension to more general anisotropic energies and to higher dimensions.
 
 In this paper we study the regularity of the contact line for a capillarity problem where the `container' is a half space $H$ and the wet region  is constrained to be confined inside a given planar domain  $O\subset\pa H$. The motivation for this problem comes from the study of a mathematical model of vapor-liquid-solid
(VLS) nanowire growth considered in the physical literature. 
We recall that during VLS growth a
nanoscale liquid drop of catalyst deposited on the flat tip of the solid cylindrical 
nanowire feeds its vertical growth. In the experiments it is observed that the sharp edge of the nanowire produces a pinning effect and forces the wet part to remain confined inside the top face of the cylinder and the  liquid drop to be contained in the upper half space, see  Figure~\ref{figure1}. 
\begin{figure}\label{figure1}
\includegraphics[scale=0.71]{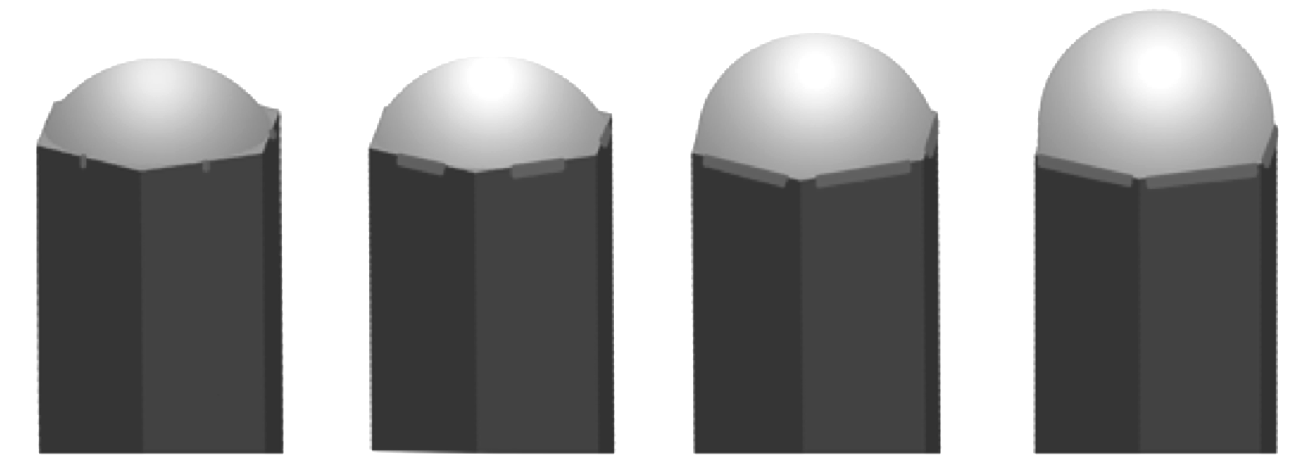}
 \caption{As the volume of the liquid drop increases the drop wets larger regions of the nanowire edge, but remains pinned at the top (reproduced from P. Krogstrup et al. \cite{Curiotto})}
\end{figure}

\subsection{Setting of the problem and main results}

Let us start by fixing some notations: We will work in \(\R^{N}\) and we set
$$
H=\{x \in \R^{N}: x_1> 0\}.
$$
Given \(\sigma\in (-1,1)\) we consider for a set of finite perimeter \(E\subset H\) and an open set \(A\) (not necessarily contained in \(H\)) the {\it capillarity energy}
\[
\begin{split}
 \Fc_\sigma(E;A)&=P(E; H\cap A)+\sigma P(E;\partial H\cap A)
 \\
 &=\H^{N-1}(\partial^* E\cap H \cap A)+\sigma \H^{N-1}(\partial^* E\cap \partial H\cap A)
 \end{split}
\]
where \(\partial^* E\) is the reduced boundary of \(E\), $P(E;G)$ is the perimeter of $E$ in $G$ (see the definitions at the beginning of Section~\ref{sec:densityestimates}) and $\H^{N-1}$ stands for the $(N-1)$-dimensional Hausdorff measure. In case \(A=\R^N\) we will simply write \( \Fc_\sigma(E)\).

We aim to impose a constraint on the {\it wet region}  \(\partial^* E\cap \partial H\) of \(E\). To this end we consider a relatively open set \(O\subset \partial H\) and  we denote by
\[
\mathcal C_{O}=\bigl\{E\subset H \text{ sets of locally finite perimeter  such that \(\partial^* E \cap \partial H\subset \overline O\)}\bigr\}
\]
the class of admissible competitors. We aim at studying the regularity properties of (local) minimizers of the variational problem
\[
\min\{ \Fc_\sigma(E): E\in \mathcal C_O, |E|=m\}.
\]
Note that classical variational arguments imply that if one assumes that \(M:=\overline{ \partial E\cap H}\) is a smooth manifold with boundary, then any  minimizer satisfies the following  Euler-Lagrange conditions:
\begin{enumerate}[label=(\roman*)]
\item (\emph{Constant mean curvature}) There exists \(\lambda>0\) such that  \(H_{M}=\lambda \) in \(M\cap H\), where $H_M$ is the sum of the principle curvatures of $M$ and more precisely coincides with the tangential divergence of the outer unit normal field $\nu_{E}$ to the boundary of $E$;
\item (\emph{Young's inequality}) \(\nu_{E}\cdot \nu_{H}\ge \sigma\) on   \(M\cap \partial H\); 
\item (\emph{Young's law inside $O$}) \(\nu_{E}\cdot \nu_{H}= \sigma\) on  \((M\cap \partial H)\setminus \partial_{\partial H} O\)\footnote{Here and in the sequel  for a set \(U\subset \partial H\) we will denote by \(\partial_{\partial H} U\) its relative boundary in \(\partial H\)}. 
\end{enumerate}

Note that   (iii) above is the classical Young's law which holds true outside the {\it thin   obstacle} $\partial_{\partial H} O$, while  (ii) is a global inequality which should hold true on the whole free boundary   \(M\cap \partial H\).

As it is customary in Geometric Measure Theory, we will remove volume type constraints and we will deal with some perturbed minimality conditions. Note that  this allows to treat several problems at once  (volume constraints, potential terms, etc.).
Concerning the regularity of the obstacle we introduce the following class of open subsets of $\partial H$ satisfying a uniform inner and outer ball condition at every point of the (relative) boundary. More precisely, we give the following definition.
\begin{definition}\label{def: BR} Let $R\in (0,+\infty)$.  We denote by 
$\mathcal{B}_{R}$ the family of all relatively open subsets $O$ of   $\partial H$    such that   for every $x\in \partial_{\partial H} O$  there exist two $(N-1)$-dimensional relatively open balls $B'$, $B''\subset \partial H$ of radius $R$ such that $B'\subset O$,  $B''\subset \partial H\setminus O$, and $\partial_{\partial H} B'\cap  \partial_{\partial H} B''=\{x\}$.

Morever, we set $\mathcal{B}_{\infty}=\cap_{R>0}\mathcal{B}_{R}$.  Note that $\mathcal{B}_{\infty}$ is made up of   all relatively open half-spaces of $\partial H$ and of $\pa H$ itself.
\end{definition}

\begin{remark}\label{rem:BR}
Note that if $O\in \mathcal{B}_{R}$, then $O$ is  of class $C^{1,1}$ with  principal curvatures bounded by $\frac{1}{R}$, see \cite{MoMo00, Dalphin}. Therefore, if $O_h$ is a sequence in $\mathcal{B}_{R_h}$ with $R_h\to R\in(0,+\infty]$, then there exists  a (not relabelled) subsequence such that $\overline{O_h}\to \overline O$ in the Kuratowski sense, where $O\in\mathcal{B}_{R}$. Moreover, for all $\alpha\in(0,1)$ $O_h\to O$ in $C^{1,\alpha}_{loc}$ in the following sense:  given $x\in \partial_{\partial H} O$, there exist a $(N-1)$-dimensional ball $B$ centered at $x$, $\psi_h,\psi\in C^{1,1}(\R^{N-2})$ such that, up to a rotation in $\pa H$,  $\partial_{\partial H} O_h\cap B$ coincides with the graph of $\psi_h $ in $B$, $\partial_{\partial H} O\cap B$ coincides with the graph of $\psi $ in $B$, and $\psi_h\to\psi$ locally in $C^{1,\alpha}(\R^{N-2})$.
\end{remark}

\begin{definition}\label{def:lambdamin}
Given \(\Lambda\geq 0\), \(r_0>0\),  and $O\subset\pa H$ a relatively open set, we say that \(E\in \mathcal C_O\) is a \((\Lambda, r_0)\)-minimizer of \(\Fc_{\sigma}\)  with obstacle \(O\) if
\begin{multline*}
 \Fc_\sigma(E;B_{r_0}(x_0))\le  \Fc_\sigma(F;B_{r_0}(x_0))+\Lambda|E\Delta F|
 \\
 \text{for all \(F\in \mathcal C_O\) such that  \(E\Delta F\Subset B_{r_0}(x_0)\)}\,.
\end{multline*}
When $E$ is a $(\Lambda,r_0)$-minimizer (with obstacle $O$) for every $r_0>0$ we will simply say that $E\subset H$ is a  \((\Lambda,+\infty)\)-minimizer (with obstacle $O$) or simply a \(\Lambda\)-minimizer (with obstacle $O$) of $\Fc_{\sigma}$.
\end{definition}

We are in a position to state the main result of the paper, which establishes full regularity in three dimensions and partial regularity in any dimension, with an estimate on the Hausdorff dimension $\text{dim}_{\mathcal H}$ of the singular set.
\begin{theorem}\label{th:reg}
Let $O\subset\pa H$ be a relatively open set of class $C^{1,1}$ and let $E\in\mathcal C_O$ a $(\Lambda,r_0)$-minimizer with obstacle $O$ of $\mathcal F_\sigma$. Then, the following conclusions hold true:
\begin{itemize}
\item[(i)] if $N=3$, then $\overline{\pa E\cap H}$ is a surface with boundary of class $C^{1,\tau}$ for all $\tau\in(0,\frac12)$.
\item[(ii)] if $N\geq4$, there exists a closed set $\Sigma\subset \overline{\pa E\cap H}\cap\pa H$, with   ${\rm dim}_{\mathcal H}(\Sigma)\leq N-4$, such that $\overline{\pa E\cap H}\setminus\Sigma$ is (locally) a $C^{1,\tau}$ hypersurface with boundary for all $\tau\in(0,\frac12)$.
\end{itemize}
Moreover, 
\begin{align*}
\nu_{E}\cdot \nu_{H}\ge \sigma \quad &\text{on} \quad   M\cap \partial H; \\
\nu_{E}\cdot \nu_{H}= \sigma \quad &\text{on} \quad   (M\cap \partial H)\setminus \partial_{\partial H} O,
\end{align*}
where we set  $M:= \overline{\pa E\cap H}\setminus\Sigma$.
\end{theorem}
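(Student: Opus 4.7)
The proof combines classical tools from the regularity theory of $\Lambda$-minimizers of perimeter with the theory of the thin (Signorini) obstacle problem. The overall strategy is to reduce the analysis of $\overline{\partial E\cap H}$ near a point $x_0$ to one of three model cases and to prove an $\varepsilon$-regularity theorem in each: (a) $x_0\in H$ is an interior point, where the problem is a classical $\Lambda$-minimizer of the perimeter; (b) $x_0\in\partial H\setminus\partial_{\partial H} O$, i.e.\ $x_0$ lies in the interior of $O$ or in $\partial H\setminus\overline O$, so that the obstacle is inactive near $x_0$ and the problem reduces to a classical capillarity problem with fixed contact angle $\sigma$; and (c) $x_0\in\partial_{\partial H} O$, the genuinely new case. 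The supporting toolbox---uniform upper and lower density estimates for $E$ and $H\setminus E$ at points of $\overline{\partial E\cap H}$, compactness of $(\Lambda,r_0)$-minimizers under local $L^{1}$-convergence joint with Kuratowski and $C^{1,\alpha}$ convergence of obstacles as in \cref{rem:BR}, and a suitable monotonicity formula---is set up from \cref{sec:densityestimates} onward.

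Cases (a) and (b) are essentially available in the literature. Interior regularity is classical and yields an $(N-8)$-dimensional singular set there. Boundary regularity away from $\partial_{\partial H} O$ follows from the De Philippis--Maggi analysis \cite{De-PhilippisMaggi15} of the pointwise Young's law $\nu_{E}\cdot\nu_{H}=\sigma$, which gives full regularity in $N=3$ and an $(N-4)$-dimensional bound on the singular set in higher dimensions. The core of the argument therefore concentrates on obstacle-boundary points.

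At $x_0\in\partial_{\partial H} O$ we perform a blow-up $E_k:=(E-x_0)/r_k$ with $r_k\downarrow 0$. Density estimates and the compactness of $(\Lambda,r_0)$-minimizers, together with the Kuratowski convergence of the rescaled obstacles to a half-space of $\partial H$ (an immediate consequence of \cref{rem:BR}), produce a limit $E_\infty$ which is a global $0$-minimizer of $\Fc_\sigma$ constrained by a half-space obstacle. A monotonicity argument forces $E_\infty$ to be a cone, and the key intermediate result is a flatness theorem: every such tangent cone is a half-space in $\R^N$ whose trace on $\partial H$ is a half-space of $\partial H$ contained in the rescaled obstacle, with contact angle determined by $\sigma$. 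From this classification one derives an $\varepsilon$-regularity statement: whenever a suitable excess of $E$ and the $C^{1,\alpha}$-oscillation of $\partial_{\partial H} O$ about a hyperplane are small in $B_r(x_0)$, then $\overline{\partial E\cap H}$ is the graph of a function $u$ over the tangent half-plane. Linearizing conditions (ii)--(iii) about the half-space profile transforms the $\Lambda$-minimality inequality for $u$ into a thin (Signorini-type) obstacle problem: $u$ solves an approximately elliptic equation in the interior and, on $\partial H$, satisfies a complementarity system expressing the Young inequality $\nu_E\cdot\nu_H\ge\sigma$ globally together with equality off the rescaled obstacle boundary. The sharp $C^{1,1/2}$ regularity for the Signorini problem then transfers, via a De Giorgi/Campanato-type iteration of the excess decay, to $C^{1,\tau}$ regularity of $u$ for every $\tau<\tfrac12$; this is precisely the origin of the restriction on $\tau$ in the statement.

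Finally, the bound $\dim_{\mathcal H}(\Sigma)\le N-4$ at obstacle-boundary points, and the complete regularity in $N=3$, follow from Federer's dimension reduction: any singular tangent cone must be a nontrivial minimizer in the constrained class, and the flatness classification rules out such cones in ambient dimension at most $3$, while in higher dimensions the stratification admits at most an $(N-4)$-dimensional singular stratum. The pointwise form of Young's inequality and identity on $M\cap\partial H$ follows by first variation on the regular part $M$, combined with the thin-obstacle complementarity on $\partial_{\partial H} O$. The main obstacle of the proof is step (c): identifying the correct linearized problem as genuinely of Signorini type, establishing the flatness classification of tangent cones at $\partial_{\partial H} O$, and transferring the sharp $C^{1,1/2^-}$ regularity of the linear model to the nonlinear geometric problem while simultaneously handling the $C^{1,1}$ curvature of the obstacle boundary rather than a flat model.
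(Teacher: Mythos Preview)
Your high-level architecture is right and matches the paper's: density estimates and compactness, blow-up at $x_0\in\partial_{\partial H}O$, monotonicity to make the limit a cone, a flatness classification in low dimension, an $\varepsilon$-regularity theorem, and Federer reduction for $N\ge 4$. The identification of the Signorini problem as the relevant linearization is also correct and is indeed the source of the $\tau<\tfrac12$ restriction.

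There is, however, a genuine gap in your $\varepsilon$-regularity step. You assert that every tangent cone at a point of $\partial_{\partial H}O$ is a half-space ``with contact angle determined by $\sigma$'' and then reduce everything to Signorini. This is not quite true: in the paper's classification (Proposition~\ref{cor:bu} for $N=3$) the blow-up is either the whole of $\partial H$ or a half-plane $\{x_N=\alpha x_1\}\cap H$ with $\alpha\ge\sigma/\sqrt{1-\sigma^2}$, and strict inequality does occur. The Signorini linearization is only valid when the reference slope is close to the critical Young value; if $\alpha$ is strictly larger (or the blow-up is $\partial H$), your scheme as written provides no mechanism to conclude. The paper handles this via a dichotomy that is absent from your outline: a \emph{barrier argument} (Lemma~\ref{lm:barrier}) shows that whenever $E$ is $\varepsilon$-flat at slope $\alpha\ge\sigma/\sqrt{1-\sigma^2}+M_0\varepsilon$, the wet boundary must coincide with $\partial_{\partial H}O$ at a smaller scale, reducing to a classical fixed-boundary problem (Lemma~\ref{iteraostac}); only in the complementary regime $\alpha\le\sigma/\sqrt{1-\sigma^2}+M_0\varepsilon$ does one linearize to Signorini (Lemma~\ref{itera}). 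The flatness-improvement iteration (Lemma~\ref{lm:epsilon}) then alternates between these two regimes.

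A second technical point you elide is how one obtains the $L^\infty$ (strip) flatness needed to run this dichotomy: the barrier comparison requires uniform closeness, not merely $L^2$ excess smallness, so a Campanato-type $L^2$ iteration is not enough. The paper secures this through a boundary extension of Savin's partial Harnack inequality (Lemma~\ref{lm:savin} and Lemma~\ref{lm:savinglobal}), proved via viscosity-type arguments for the upper and lower envelopes $u^\pm$ and a delicate reflection across $\partial H$; this is a substantial part of the work and does not follow from the standard capillarity references you cite.
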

The main new point of the above result is the regularity at the  points of the free-boundary  $\overline{\pa E\cap H}\cap\pa H$ lying on the  thin obstacle  $\pa_{\pa H}O$. Indeed the full regularity at the boundary points $\overline{\pa E\cap H}\cap O$ for $N=3$ follows  from the classical result of \cite{Taylor77}, while the partial regularity for $N\geq4$ from the more recent paper \cite{De-PhilippisMaggi15}. The idea the proof of Theorem~\ref{th:reg} is based on the following dichotomy, inspired by the work of Fern\'andez-Real and Serra,~\cite{Fernandez-RealSerra20}, where a thin obstacle problem for the area functional is studied,  see also the work of Focardi and Spadaro  \cite{focardi-spadaro20} for the non parametric case.  If in a $r$-neighborhood of $x\in\overline{\pa E\cap H}\cap\pa_{\pa H}O$ the boundary $\pa E\cap H$ is contained in a sufficiently thin strip with a slope  sufficiently larger than the slope $\alpha(\sigma)$ given by the Young's law, then we show by a barrier argument that at scale $r/2$ the free boundary fully coincides with the thin obstacle $\pa_{\pa H}O$.  If instead the slope of the strip is sufficiently close to $\alpha(\sigma)$, then a  linearization procedure allows us to reduce to a Signorini type problem and to deduce from the  decay estimates available for the latter   a flatness improvement result at a smaller scale. Iterating this dichotomy argument leads to a  {\it boundary $\e$-regularity} result, see Theorem~\ref{th:epsreg}, which, combined with a suitable monotonicity formula,  finally yields  the proof of Theorem~\ref{th:reg}. Since the barrier argument requires closeness in the ``uniform norm'', in order to make the dichotomy effective, we need to establish a decay of the excess in the same norm. For this step we follow the ideas introduced by Savin in \cite{Savin}  to deal with interior regularity of minimal surfaces and we rely on a partial Harnack inequality which is obtained by a nontrivial extension to the boundary of these ideas.
 The connection  with the Signorini problem also explains why we cannot expect better regularity than $C^{1,1/2}$, see for instance \cite{Richardson78}.

\subsection{Applications to a model for nanowire growth}\label{nanosec}

\begin{figure}\label{figure2}
\vskip -1cm
\includegraphics[scale=0.25]{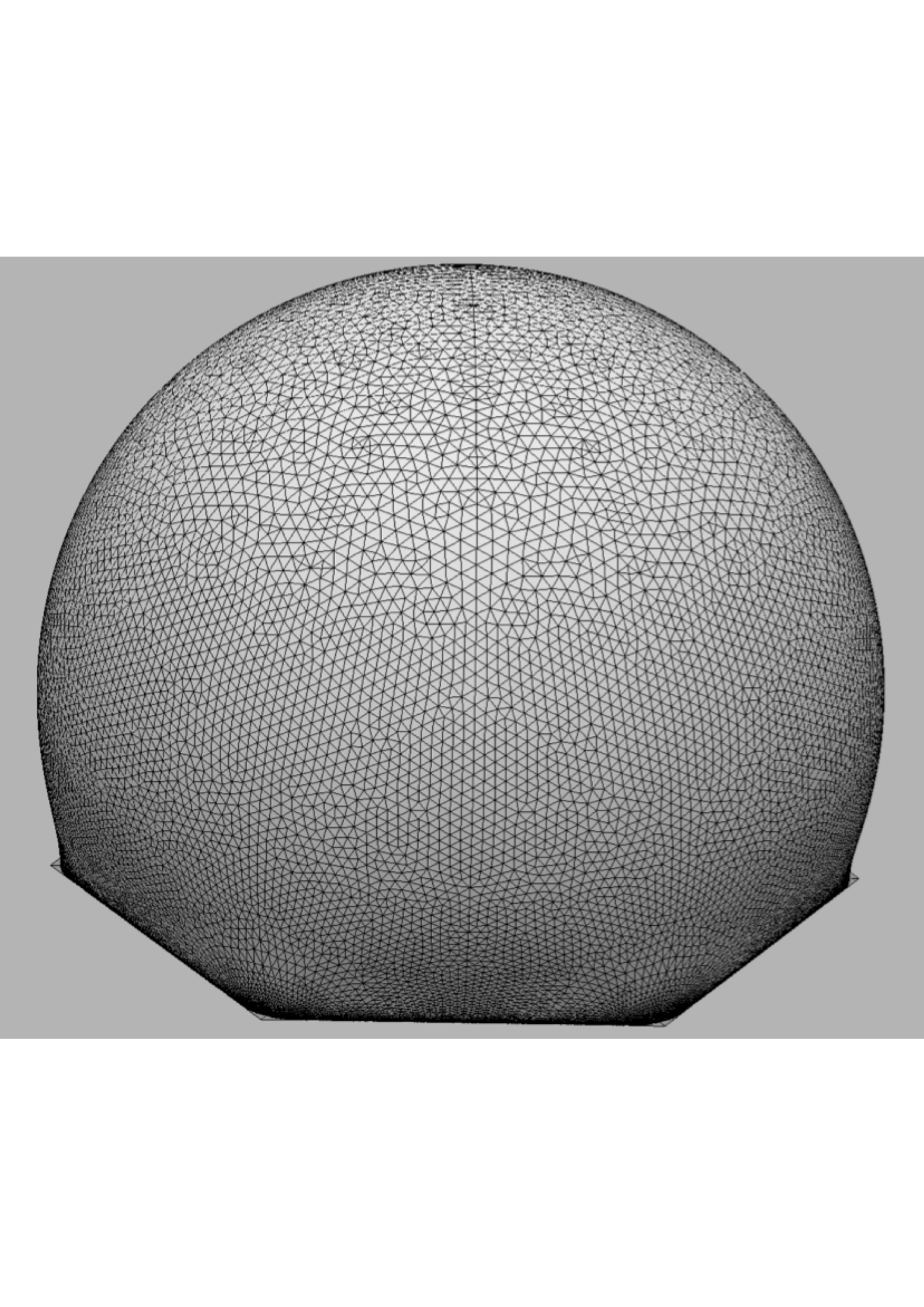}
\includegraphics[scale=0.39]{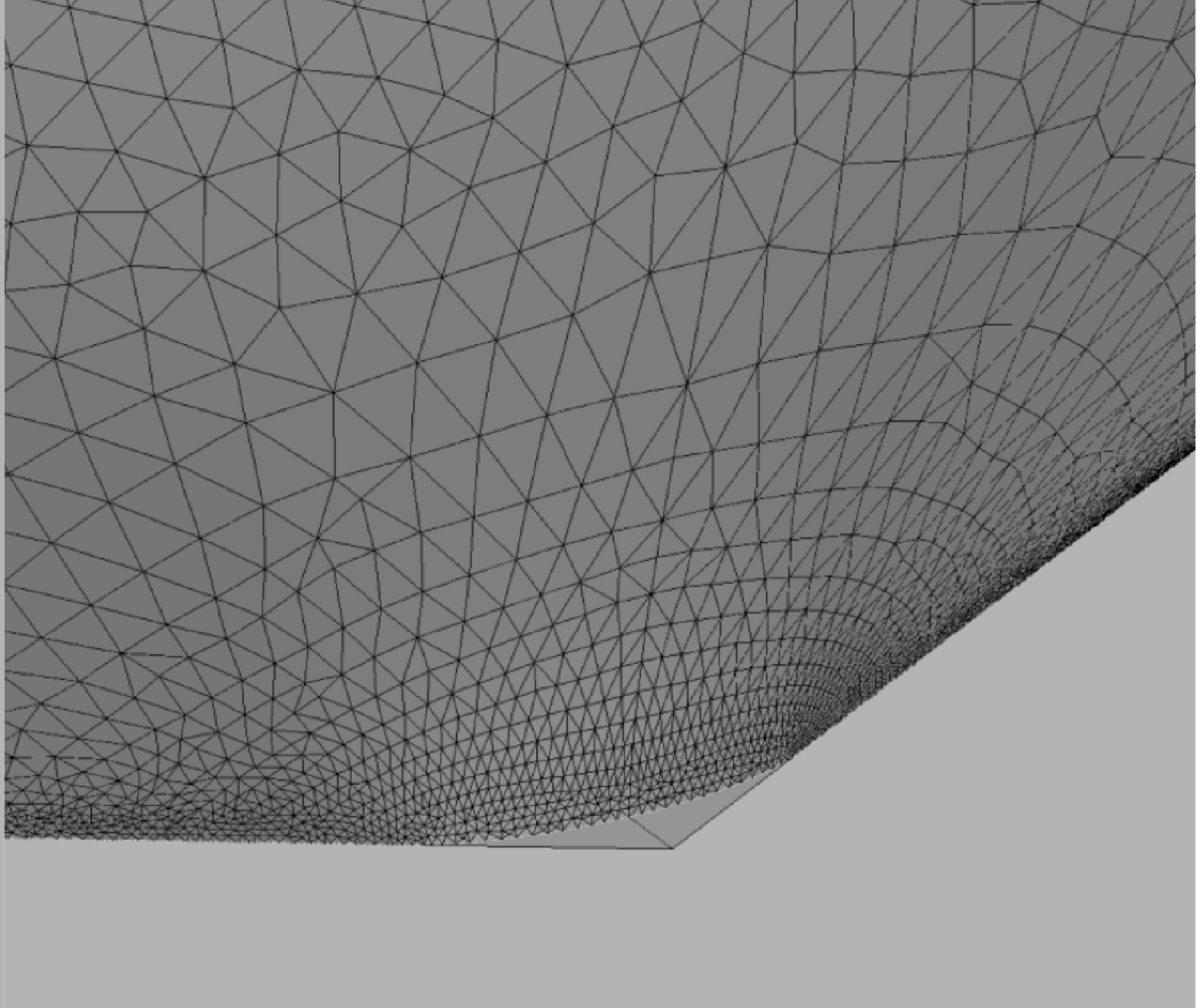}
 \caption{On the left a drop sitting on the tip of a nanowire with hexagonal section. The picture on the right shows the behaviour of the droplet near a corner (courtesy of B. Spencer)}
\end{figure}

As anticipated at the beginning of this introduction, we  conclude by applying the above regularity theory to a model of vapor-liquid-solid
(VLS) nanowire growth considered in the physical literature and studied in \cite{FFLM22}. 
 
Following the work of several authors, see the references in \cite{FFLM22}, we consider a continuum framework for
nanowire VLS growth. We model the nanowire as a semi-infinite cylinder  $\cc=\omega\times(-\infty,0]$, where $\omega\subset\R^2$ is a bounded sufficiently regular domain,   and the liquid drop as a set
$E\subset\mathbb{R}^{3}\setminus\cc$ of finite perimeter. Typically the observed nanowires have either a regular or a polygonal section. 
Given $\sigma\in(-1,1)$  we consider the free energy%
\[
J_{\sigma,\omega}(E):=\mathcal{H}^{2}(\partial^{\ast}E\setminus\cc)+\sigma\mathcal{H}^{2}(\partial^{\ast}E\cap\cc)\,.
\]
The shape of the liquid drop is then described by (local) minimizers of $J_{\sigma,\omega}$ under a volume constraint. To this aim we say that $E\subset\R^3\setminus\cc$ is a {\it volume constrained local minimizer of $J_{\sigma,\omega}$} if there exists $\e>0$ such that $J_{\sigma,\omega}(E)\leq J_{\sigma,\omega}(F)$ for all $F\subset\R^3\setminus\cc$ with $|F|=|E|$ and $|E\Delta F|<\e$.
 Here we study  local minimizing configurations corresponding to liquid drops sitting on the top $\cc_{top}:=\omega\times\{0\}$ of the cylinder and contained in the upper half space $\mathscr H:=\{x_3>0\}$.  

In the case where $\omega$ is a $C^{1,1}$  domain we prove the following regularity result. 
\begin{theorem}\label{cinqueuno}
 Let $\omega\subset\R^2$ be a bounded domain of class $C^{1,1}$ and let $E\subset\mathscr H$ be a volume constrained local minimizer of $J_{\sigma,\omega}$. Then $\overline{\pa E\cap\mathscr H}$ is a surface with boundary of class $C^{1,\tau}$ for all $\tau\in(0,\frac12)$.
\end{theorem}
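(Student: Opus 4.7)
The strategy is to realize Theorem~\ref{cinqueuno} as a direct instance of Theorem~\ref{th:reg} in dimension three. Set $N=3$, $H=\mathscr H$, and regard $O:=\omega\times\{0\}$ as a relatively open subset of $\pa\mathscr H$. Since $\omega$ is $C^{1,1}$, so is $O$. Because $E\subset \mathscr H$, the only part of $\pa^*E$ that can touch $\cc$ lies on $\cc_{top}=\overline\omega\times\{0\}$; in particular $\pa^* E\cap\pa H\subset \overline O$, so $E\in\mathcal C_O$, and
\[
J_{\sigma,\omega}(E)=\mathcal H^2(\pa^*E\cap \mathscr H)+\sigma\mathcal H^2(\pa^* E\cap O)=\mathcal F_\sigma(E),
\]
with the same identity holding for every admissible competitor $F\subset\R^3\setminus\cc$ contained in $\mathscr H$.

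The missing ingredient is to upgrade the volume-constrained local minimality of $E$ to unconstrained $(\Lambda,r_0)$-minimality in the sense of Definition~\ref{def:lambdamin}. This is a classical reduction, going back to Almgren (see e.g.\ Maggi's book on sets of finite perimeter). Fix a regular point $y_0\in\pa^*E\cap\mathscr H$ (which exists since $|E|>0$ and density estimates prevent $\pa^*E$ from being concentrated on the flat face $\overline\omega\times\{0\}$), a ball $B_\rho(y_0)\Subset\mathscr H\setminus\cc$, and a smooth compactly supported vector field $X$ with $\int_{\pa^*E} X\cdot \nu_E\, d\mathcal H^2\ne 0$. The implicit function theorem then provides, for every set $F\subset \R^3\setminus \cc$ sufficiently close to $E$ in $L^1$, a parameter $t(F)$ with $|t(F)|\leq C\bigl||F|-|E|\bigr|$ such that the flow $\Phi_{t(F)}$ of $X$ transforms $F$ into a set $\tilde F\subset\R^3\setminus\cc$ with $|\tilde F|=|E|$ and
\[
\bigl|J_{\sigma,\omega}(\tilde F)-J_{\sigma,\omega}(F)\bigr|\leq C\bigl||F|-|E|\bigr|\leq C|E\Delta F|.
\]
For any $F\in\mathcal C_O$ with $E\Delta F\Subset B_{r_0}(x_0)$, provided $r_0$ is small enough that $B_{r_0}(x_0)\cap B_\rho(y_0)=\emptyset$ and $|E\Delta\tilde F|<\e$, the local minimality of $E$ applied to $\tilde F$, together with the fact that $\Phi_{t(F)}$ acts trivially inside $B_{r_0}(x_0)$, yields
\[
\mathcal F_\sigma(E;B_{r_0}(x_0))\leq \mathcal F_\sigma(F;B_{r_0}(x_0))+\Lambda|E\Delta F|
\]
for a suitable $\Lambda>0$. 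Hence $E$ is a $(\Lambda,r_0)$-minimizer of $\mathcal F_\sigma$ with obstacle $O$.

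Since $N=3$, Theorem~\ref{th:reg}(i) then gives that $\overline{\pa E\cap \mathscr H}$ is a surface with boundary of class $C^{1,\tau}$ for every $\tau\in(0,1/2)$, which is the desired conclusion. The whole argument is essentially bookkeeping on top of Theorem~\ref{th:reg}; no genuinely new difficulty arises, since all the delicate analysis at the thin obstacle $\pa_{\pa\mathscr H}O=\pa\omega\times\{0\}$ is already packaged inside Theorem~\ref{th:reg}. The only mildly subtle point is the choice of the regular interior point $y_0$ for the volume-adjusting perturbation, handled by the standard interior density estimates for $(\Lambda,r_0)$-minimizers.
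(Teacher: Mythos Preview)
Your proposal is correct and takes essentially the same route as the paper: identify $J_{\sigma,\omega}$ with $\mathcal F_\sigma$ on competitors contained in $\mathscr H$, upgrade volume-constrained local minimality to $(\Lambda,r_0)$-minimality via the standard Almgren volume-fixing argument (the paper simply cites Example~21.3 in \cite{Maggi12}), and then invoke Theorem~\ref{th:reg}(i) for $N=3$. The only cosmetic difference is that the paper records the penalization in the global form $J_{\sigma,\omega}(E)\le J_{\sigma,\omega}(F)+\Lambda\bigl||E|-|F|\bigr|$ for all $F$ with $\mathrm{diam}(E\Delta F)<r_0$, which sidesteps the minor caveat in your version that the localized inequality is obtained only for balls $B_{r_0}(x_0)$ disjoint from the chosen volume-fixing ball $B_\rho(y_0)$; this is harmless (one may use two disjoint volume-fixing regions, or note that regularity near the interior regular point $y_0$ is automatic), but it is worth being aware of.
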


 In the experiments it is observed that for nanowires with a polygonal section, see for instance \cite{Curiotto}, the liquid drop never wets the corners of the polygon as illstrated in Figure~\ref{figure2}. Here we give a rigorous proof of this fact when $\sigma<0$ and, for a general $\sigma$, under the additional assumption that  the contact between the liquid drop and   $\gamma:=\pa\omega\times\{0\}$ is nontangential (see Definition~\ref{nontang}). 

\begin{theorem}\label{cinquedue}
Let $\omega\subset\R^2$ be a convex polygon and let $E\subset\mathscr H$ be a volume constrained local minimizer of $J_{\sigma,\omega}$. If $\sigma<0$, then $\overline{\pa E\cap\mathscr H}$ is a surface with boundary of class $C^{1,\tau}$ for all $\tau\in(0,\frac12)$ and the contact line $\overline{\pa E\cap\mathscr H}\cap\cc_{top}$ does not contain any vertex of the polygon. If $\sigma\geq0$ the same conclusion holds provided that $E$ has a nontangential contact at all points of $\overline{\pa E\cap\mathscr H}\cap\gamma$.
\end{theorem}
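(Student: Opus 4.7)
The plan is to bootstrap Theorem~\ref{th:reg} via two steps. First, the standard reduction: a volume constrained local minimizer of $J_{\sigma,\omega}$ is, by a volume-fixing variation argument, a $(\Lambda,r_0)$-minimizer of $\Fc_\sigma$ (in the sense of Definition~\ref{def:lambdamin}) with obstacle $O=\omega\subset\pa\mathscr H$. Second, and crucially, the contact line $\overline{\pa E\cap\mathscr H}\cap\cc_{top}$ does not meet any vertex $v\in V=\{v_1,\dots,v_k\}$ of the polygon. Granting the latter, at each contact point $x_0$ away from $V$ the obstacle $\omega$ is $C^{1,1}$ (actually smooth) in a small ball around $x_0$, and the local nature of the arguments used to prove Theorem~\ref{th:reg} (boundary $\e$-regularity combined with the relevant monotonicity formula) applies to give $C^{1,\tau}$ regularity of $\overline{\pa E\cap\mathscr H}$ near $x_0$; a standard covering argument then yields the conclusion.

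To prove the vertex avoidance, argue by contradiction: assume some $v\in V$ lies in $\overline{\pa E\cap\mathscr H}$ and consider the blow-ups $E_r:=r^{-1}(E-v)$. These are $(r\Lambda,r_0/r)$-minimizers of $\Fc_\sigma$ with obstacles $r^{-1}(\omega-v)$ which, $\omega$ being polygonal, equal (in any fixed ball, for $r$ small) the tangent cone $C_v$ of $\omega$ at $v$: an open convex planar cone of opening $\theta_v<\pi$. By compactness of $(\Lambda,r_0)$-minimizers together with a monotonicity formula for $\Fc_\sigma$ analogous to that invoked in the proof of Theorem~\ref{th:reg}, along a subsequence $E_{r_j}\to F$ in $L^1_{loc}$, where $F$ is a zero-homogeneous $0$-minimizer of $\Fc_\sigma$ with obstacle $\overline{C_v}$ satisfying $0\in\overline{\pa F\cap\mathscr H}$. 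A contradiction is then produced by a barrier comparison with a spherical cap of suitable radius meeting $\pa\mathscr H$ at the Young angle $\alpha_\sigma=\arccos(-\sigma)$, whose wet disk is contained in a translate of $C_v$ slightly offset from the origin. For $\sigma<0$ the condition $\alpha_\sigma>\pi/2$, together with $\theta_v<\pi$, makes this cap strictly decrease $\Fc_\sigma$, contradicting minimality. For $\sigma\geq 0$ the nontangential contact hypothesis passes to $F$ and prevents it from being tangent to $\pa\mathscr H$ along the rescaled copy of $\gamma$, so that the same cap comparison still works.

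The main obstacle is the vertex analysis itself: one must combine compactness, stability of $(\Lambda,r_0)$-minimality under the convergence of the rescaled polygonal obstacle to $C_v$, and a monotonicity formula in order to produce the conical blow-up $F$, and then explicitly build admissible competitors (supported in $\mathscr H$ with wet region inside $\overline{C_v}$) that strictly improve the energy. The case $\sigma\geq 0$ is the subtler one, and the nontangentiality hypothesis is used precisely to rule out degenerate blow-ups along $\gamma$ for which the spherical cap barrier would not be effective.
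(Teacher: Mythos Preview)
Your high-level strategy---reduce to a $(\Lambda,r_0)$-minimizer, blow up at a putative vertex, and derive a contradiction---is the same as the paper's. But the contradiction step is where the substance lies, and your sketch has a genuine gap there.

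The paper does \emph{not} use a spherical cap comparison. Instead it invokes Proposition~\ref{cor:bu}: in dimension three every blow-up $E_\infty$ at a point of $\partial_{\partial H}O$ is either the half-space $\{x_3>0\}$ (tangential case) or, after rotation, has $\partial E_\infty\cap H=\{x_N=\alpha x_1\}\cap H$ with $\alpha\ge\sigma/\sqrt{1-\sigma^2}$, \emph{and in the latter case the tangent obstacle $\tilde O=T_{\bar x}O$ is forced to be a half-plane}. Thus, under the nontangential contact hypothesis the tangential case is excluded, the half-plane alternative holds, and one reads off immediately that $T_{\bar x}\omega$ is a half-plane---so $\bar x$ cannot be a vertex. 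No competitor construction is needed here; the classification of minimal cones in $\R^3$ does all the work.

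For $\sigma<0$ the paper then \emph{proves} nontangential contact by contradiction: if $E_\infty=\{x_3>0\}$, the complement $F_\infty$ in $\R^3\setminus(T_{\bar x}\omega\times(-\infty,0])$ is a $0$-minimizer of $J_{-\sigma,T_{\bar x}\omega}$, and locally near a half-line of $\partial T_{\bar x}\omega\times\{0\}$ it is a $0$-minimizer of $\Fc_{-\sigma}$ with a half-plane obstacle. The viscosity Young inequality (third line of \eqref{irene-1}, with $-\sigma$ in place of $\sigma$) then forces $\partial F_\infty$ to meet $\partial H$ at an angle strictly larger than $\pi/2$, contradicting $\partial F_\infty\subset\{x_3=0\}$.

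Your spherical-cap argument is not a substitute for either step. You do not specify the competitor, and the obvious candidate fails: if $E_\infty=\{x_3>0\}$ and one removes a lens bounded below by a disk $D\subset C_v$ and above by a cap of area $A_{\mathrm{cap}}$, the energy change is $A_{\mathrm{cap}}-\sigma|D|$, which for $\sigma<0$ is $A_{\mathrm{cap}}+|\sigma|\,|D|>0$. So the cap \emph{increases} $\Fc_\sigma$, and no contradiction ensues. The assertion that ``$\alpha_\sigma>\pi/2$ together with $\theta_v<\pi$ makes this cap strictly decrease $\Fc_\sigma$'' is unsupported. Likewise, for $\sigma\ge0$ you say the nontangentiality hypothesis ``passes to $F$'' and the cap comparison ``still works'', but neither claim is argued; the paper never needs a cap at all in this case, since Proposition~\ref{cor:bu} already rules out the vertex directly once the tangential blow-up is excluded.
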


\section{Density estimates and compactness}\label{sec:densityestimates}

Given a Lebesgue measurable set $E\subset\R^N$ , we say that $E$ is of locally finite perimeter if there exists a $\R^N$-valued Radon measure $\mu_E$ such that
$$
\int_E\nabla\varphi\,dx=\int_{\R^N}\varphi\,d\mu_E\qquad \text{for all $\varphi\in C^1_c(\R^N;\R^N)\,.$}
$$
If $G\subset \R^N$ is a Borel set we denote by $P(E; G)= |\mu_E|(G)$ the perimeter of $E$ in $G$. For all relevant definitions and properties of sets of finite perimeter we shall refer to the book \cite{Maggi12}. In the following we denote by $\partial^*E$ the reduced boundary of a set of finite perimeter and by 
$\pa^eE$ the {\em essential boundary}, which  is defined as
$$
\pa^eE:=\R^N\setminus(E^{(0)}\cup E^{(1)})\,,
$$
where $E^{(0)}$ and $E^{(1)}$ are the sets of points where the density of $E$ is $0$ and $1$, respectively. Since the perimeter measure coincides with the $\H^{N-1}$ measure restricted to the reduced boundary $\pa^*E$, we will often write
$\H^{N-1}(\pa^*E\cap \Om)$ instead of $P(E;\Om)$. Note that  if $E\subset H$ the characteristic function of $\partial^* E\cap \partial H$ is the trace of \(1_E\) intended as a \(BV\) function (for the definition of trace see \cite[Th. 3.87]{AmbrosioFuscoPallara00}).

In the following, when dealing with a set of locally finite perimeter $E$, we will always assume that $E$ coincides with a precise representative that satisfies the property $\pa E=\overline{\pa^*E}$, see \cite[Remark 16.11]{Maggi12}. A possible choice is given by $E^{(1)}$, for which one may  check that
\beq\label{Euno}
\pa E^{(1)}=\overline{\pa^*E}\,.
\eeq

\medskip
Given a set \(E\) we denote by \(E_{x,r}\) the set  \(E_{x,r}:=(E-x)/r\). A ball  centered in \(x\) and of radius \(r\) is denoted by \(B_r(x)\), if the center is \(0\) we simply set \(B_r\). 
 Note that if \(E\) is \((\Lambda, r_0)\)-minimizer of \(\Fc_{\sigma}\)   with obstacle \(O\in \mathcal B_R\) and \(x\in \partial H\), \(r>0\) then 
\(E_{x,r}\) is a \((\Lambda r, r_0/r)\)  minimizer of \(\Fc_{\sigma}\)  with obstacle \(O_{x,r}\in \mathcal B_{\frac Rr}\). Hence we can assume without loss of generality that \(E\) is \((\Lambda, 1)\)-minimizer with obstacle $O\in \mathcal B_R$, where
\begin{equation}\label{e:basicassumption}
 \Lambda +\frac 1R\leq \boldsymbol{c_0}\leq 1\,,
\end{equation}
with \(\boldsymbol{c_0}\) is a small constant, depending on $\sigma$ and $N$ (to be chosen later). 
We will say, with a slight abuse of language, that a constant is \emph{universal} if it only depends on $\Lambda$,   \(\sigma\)  and on the dimension. 


We start by observing that the constraint on the wet part can be replaced by a suitable penalization. To this end we introduce the following functional  
$$
\mathcal{F}^{O}_\sigma(F;A):=P(F;  A\setminus \overline O)+\sigma P(F;O\cap A)\,,
$$
where $A\subset\R^N$ is an open set and $F\subset H$ is a set of finite perimeter. If $A=\R^N$ we simply write $\mathcal{F}^{O}_\sigma(F)$.
We also denote by $\mathbf{C}_O$ the semi-infinite cylinder constructed over $O$, that is,
$$
\mathbf{C}_O:=\{x=(x_1,\dots,x_N)\in\R^N: x_1>0 \,\,\text{and}\,\, (x_2,\dots,x_N)\in O\}\,.
$$
\begin{lemma}\label{lm:osta}
Assume that $E\subset H$ is a $(\Lambda, r_0)$-minimizer for $\mathcal F_\sigma$ in the sense of Definition~\ref{def:lambdamin}.
Then, $E$ is a $(\Lambda, r_0)$-minimizer for $\mathcal F^O_\sigma$ without obstacle, that is,
$$
 \Fc^O_\sigma(E;B_{r_0}(x_0))\le  \Fc^O_\sigma(F;B_{r_0}(x_0))+\Lambda|E\Delta F| \text{ for all \(F\subset H\) such that   \(E\Delta F\Subset B_{r_0}(x_0)\)}\,.
$$
\end{lemma}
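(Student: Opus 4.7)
The plan is to absorb the obstacle constraint into the functional by replacing every trial set $F\subset H$ with a surrogate $F_\e\in\mathcal{C}_O$ whose $\Fc_\sigma$-energy matches $\Fc^O_\sigma(F;B_{r_0}(x_0))$ in the limit $\e\to 0^+$. First I would observe that $\Fc_\sigma(F;A)=\Fc^O_\sigma(F;A)$ for every $F\in\mathcal{C}_O$: since $\partial^*F\cap\partial H\subset\overline{O}$ and the $C^{1,1}$ regularity of $O$ (cf.\ Remark~\ref{rem:BR}) gives $\H^{N-1}(\partial_{\partial H}O)=0$, the ``extra penalty'' term $\H^{N-1}(\partial^*F\cap(\partial H\setminus\overline{O})\cap A)$ vanishes and the wet term collapses to $\sigma\H^{N-1}(\partial^*F\cap O\cap A)$. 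Applied to $F=E$, this rewrites the left-hand side of the target inequality as $\Fc_\sigma(E;B_{r_0}(x_0))$.

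For a general $F\subset H$ with $E\Delta F\Subset B_{r_0}(x_0)$, I would pick an intermediate ball $B'\Subset B_{r_0}(x_0)$ containing $E\Delta F$ and, for $\e>0$, set
\[
F_\e:=F\setminus\bigl(\{0<x_1<\e\}\cap\mathbf{C}_{\partial H\setminus\overline{O}}\cap B'\bigr).
\]
Outside $B'$ the modification is trivial, so $E\Delta F_\e\Subset B_{r_0}(x_0)$. Near any $y\in\partial H\setminus\overline{O}$ a full $\partial H$-neighborhood lies in $\partial H\setminus\overline{O}$, so $F_\e$ is empty in a one-sided neighborhood of $(0,y)$ and the boundary trace of $1_{F_\e}$ vanishes there; thus $F_\e\in\mathcal{C}_O$, and by Definition~\ref{def:lambdamin}
\[
\Fc_\sigma(E;B_{r_0}(x_0))\le \Fc_\sigma(F_\e;B_{r_0}(x_0))+\Lambda|E\Delta F_\e|.
\]

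The core step is to show $\limsup_{\e\to0^+}\Fc_\sigma(F_\e;B_{r_0}(x_0))\le\Fc^O_\sigma(F;B_{r_0}(x_0))$. Writing $V_\e$ for the removed slab, $\partial^*F_\e=(\partial^*F\setminus V_\e)\cup(\partial V_\e\cap F^{(1)})$ up to $\H^{N-1}$-null sets. The retained bulk piece $\H^{N-1}(\partial^*F\cap V_\e^c\cap H\cap B_{r_0}(x_0))$ tends to $P(F;H\cap B_{r_0}(x_0))$ by continuity of the measure. The new boundary $\partial V_\e\cap F^{(1)}\cap H$ decomposes into a horizontal lid $\{x_1=\e\}\cap\mathbf{C}_{\partial H\setminus\overline{O}}\cap F\cap B'$, whose $\H^{N-1}$-measure converges along a.e.\ $\e\downarrow 0$ to $\H^{N-1}(\partial^*F\cap(\partial H\setminus\overline{O})\cap B_{r_0}(x_0))$ via the $L^1_{\mathrm{loc}}$ convergence of BV slices to the boundary trace of $1_F$; plus two lateral cylindrical pieces, over $\partial_{\partial H}O\cap B'$ and over $\partial B'\cap V_\e$, both of height $\e$ and hence of $\H^{N-1}$-measure $O(\e)$ (the first using the $C^{1,1}$ regularity of $O$ to guarantee $\H^{N-2}(\partial_{\partial H}O\cap B')<+\infty$). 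On $\partial H$ the trace of $1_{F_\e}$ equals that of $1_F$ restricted to $O$, so $\sigma P(F_\e;\partial H\cap B_{r_0}(x_0))\to\sigma\H^{N-1}(\partial^*F\cap O\cap B_{r_0}(x_0))$. Summing reproduces $\Fc^O_\sigma(F;B_{r_0}(x_0))$, and passing to the limit in the minimality inequality with $|E\Delta F_\e|\to|E\Delta F|$ (from $F_\e\to F$ in $L^1$) yields the claim.

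The most delicate point is the convergence of the horizontal lid: BV-trace theory supplies only $L^1_{\mathrm{loc}}$ convergence of the slices at $x_1=\e$ to the trace on $\partial H$, hence convergence of $\H^{N-1}$-measures only along a.e.\ $\e\downarrow 0$. This is sufficient because only an upper bound on $\limsup_\e\Fc_\sigma(F_\e;\cdot)$ is needed to close the argument.
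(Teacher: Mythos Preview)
Your proof is correct and follows essentially the same route as the paper's: both construct an admissible competitor $F_\eps\in\mathcal C_O$ by excising from $F$ the thin slab $\{0<x_1<\eps\}$ over $\partial H\setminus\overline O$ inside a fixed ball, then estimate $\Fc_\sigma(F_\eps)$ from above by the retained bulk perimeter plus a horizontal lid at $\{x_1=\eps\}$ plus $O(\eps)$ lateral pieces, and pass to the limit using the $BV$-trace convergence of slices to recover $\Fc^O_\sigma(F)$. The only cosmetic difference is that the paper writes $F_\eps$ as a union over $\mathbf C_O$ while you write it as a set difference over $\mathbf C_{\partial H\setminus\overline O}$; these are the same modification up to $\H^{N-1}$-null sets, and your explicit invocation of the $C^{1,1}$ regularity of $O$ to bound the lateral cylinder contribution is a point the paper leaves implicit.
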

\begin{proof}
Since the argument is local we may assume without loss of generality that $E$ is a set of finite perimeter.

Let $F$ be as in the statement and let $B$ be an open  ball of radius  $r_0$ such that $E\Delta F\Subset B$ and 
$\H^{N-1}(\partial^*F\cap \partial B)=0$, and set $B^+=B\cap H$. For every $\eps>0$ (sufficiently small) we set 
$$
F_\eps:=[(F\cap\{x_1>\eps \}\cap B^+)\setminus \mathbf{C}_O)]\cup (F\cap \mathbf{C}_O)\cup (F\setminus B^+)\,.
$$
Note that $F_\eps$ is an admissible competitor in the sense of Definition~\ref{def:lambdamin} and thus
\begin{equation}\label{e:osta1}
\Fc_\sigma(E)\leq \Fc_\sigma(F_\eps)+\Lambda|F_\eps\Delta E|\,.
\end{equation}
Note that for a.e. $\e$
\begin{align*}
\Fc_\sigma(F_\eps)\leq& P(F; (B^+\cap \{x_1>\eps\})\setminus \overline{\mathbf{C}}_O)
+\H^{N-1}((F^{(1)}\cap B^+\cap\{x_1=\eps\})\setminus \overline{\mathbf{C}}_O)\\
&+ P(F; B^+\cap \overline{\mathbf{C}}_O)+P(F; H\setminus B^+)
+\sigma P(F; O)\\
&+ \H^{N-1}(F^{(1)}\cap B^+\cap\partial \mathbf{C}_O\cap \{x_1\leq \eps\})+
\H^{N-1}(\partial B\cap F^{(1)}\cap \{x_1\leq \eps\})\,.
\end{align*}
By the continuity of the trace with respect to strict convergence in $BV$ (see \cite[Th.~3.88]{AmbrosioFuscoPallara00}) we have that 
$$
\H^{N-1}((F^{(1)}\cap B^+\cap\{x_1=\eps_n\})\setminus \overline{\mathbf{C}}_O)
\to P(F; \partial H\setminus O)
$$
for a sequence $\eps_n\to 0^+$. Thus,
\begin{multline*}
\liminf_{\eps\to0}\Fc_\sigma(F_{\eps})\leq P(F; B^+\setminus \overline{\mathbf{C}}_O)+P(F; \partial H\setminus O)+P(F; B^+\cap \overline{\mathbf{C}}_O)\\
+P(F; H\setminus \overline B^+)
+\sigma P(F;  O)=\Fc^O_\sigma(F)\,.
\end{multline*}
Then conclusion follows recalling \eqref{e:osta1}.
\end{proof}
\begin{remark}\label{rem:lsc}
Observe that  $\mathcal F_\sigma(\cdot,A)$ is lower semicontinuous with respect to the $L^1_{loc}$ convergence in $\mathcal C_O$, see \cite[Prop. 19.1]{Maggi12}. The same proposition can be applied to show that also $\mathcal F^O_\sigma(\cdot,A)$ is lower semicontinuous with respect to $L^1_{loc}$ in $H$. To this aim it is enough to observe that it is possible to construct an open set $U$ of class $C^{1,1}$   contained in $\R^N\setminus\overline H$ such that $\pa U\cap\pa H=\overline O$ and then to apply \cite[Prop. 19.1 and Prop. 1.3]{Maggi12} to $\R^N\setminus\overline U$.
\end{remark}

We now prove some useful volume and perimeter density estimates.

\begin{proposition}\label{p:densityestimates}
Let \(\sigma \in (-1,1)\), let \(E\) be a \((\Lambda,1)\)-minimizer of \(\Fc_{\sigma}\)  with obstacle \(O\in \mathcal B_R\) 
and assume that \eqref{e:basicassumption} is in force. 
Then there are  universal positive constants \(c_1\) and \(C_1\) such that 
\begin{enumerate}[label=\textup{(\roman*)}]
\item for all $x\in H$ 
$$
P(E; B_r(x))\leq C_1 r^{N-1}\,;
$$
\item for all \(x\in \partial E\)  
\[
|E\cap B_r(x)|\geq c_1 |B_r(x)\cap H|\,;
\]
\item for all \(x\in \overline{\partial E\cap H}\)  
\[
P(E;B_r(x)\cap H)\geq c_1 r^{N-1}\,;
\]
\item
 if  $x\in \partial  (H\setminus E)$ and $B_{2r}(x)\cap\partial H\subset O$, then 
$$
|B_r(x)\setminus E|\geq c_1|B_r(x)\cap H|
$$
\end{enumerate}
for every $r\leq1$. Finally,  $E$ is equivalent to an open set, still denoted by $E$,  such that $\pa E=\pa^eE$, hence $\H^{N-1}(\pa E\setminus\pa^*E)=0$. 
\end{proposition}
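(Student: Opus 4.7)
The plan is to first invoke Lemma~\ref{lm:osta} to replace the hard obstacle by a penalization, so that $E$ may be compared with arbitrary $F\subset H$, and then deploy the standard De~Giorgi--Almgren competitor scheme with $F=E\cup(B_r\cap H)$ and $F=E\setminus B_r$. The crucial analytic tool is the \emph{capillary-type isoperimetric inequality}, valid for $\sigma\in(-1,1)$: for every $E\subset H$ of finite perimeter, every $x\in\partial H$, and every $r>0$,
\[
|E\cap B_r(x)|^{(N-1)/N}\le C(N,\sigma)\Bigl[P(E;B_r(x)\cap H)+\sigma P(E;B_r(x)\cap\partial H)+\H^{N-1}(\partial B_r(x)\cap E^{(1)}\cap H)\Bigr],
\]
which is obtained by comparing $E\cap B_r(x)$ with the explicit spherical-cap minimizers of $\mathcal F_\sigma$ at fixed volume (see Maggi's monograph). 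For $x\in H$ with $B_r(x)\Subset H$ the ordinary Euclidean isoperimetric inequality suffices and the wet term disappears.

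Statement (i) is obtained by testing $E$ against $F=E\cup(B_r(x)\cap H)$: since $F$ fills $B_r(x)\cap H$, one has $P(F;B_r(x)\cap H)=0$ and $P(F;B_r(x)\cap\partial H)\le\omega_{N-1}r^{N-1}$, so the penalized minimality together with $|\sigma|<1$ delivers $P(E;B_r(x)\cap H)\le Cr^{N-1}$; the wet part of $E$ is trivially bounded by $\omega_{N-1}r^{N-1}$ since $\partial H$ is flat, giving~(i).

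For (ii), set $v(r)=|E\cap B_r(x)|$ and test with $F=E\setminus B_r(x)$. A standard slicing identity yields, for a.e.\ $r\le 1$,
\[
P(E;B_r(x)\cap H)+\sigma P(E;B_r(x)\cap\partial H)\le v'(r)+\Lambda v(r),\qquad v'(r)=\H^{N-1}(\partial B_r(x)\cap E^{(1)}\cap H),
\]
which, fed into the capillary isoperimetric inequality, collapses to $v(r)^{(N-1)/N}\le C(v'(r)+\Lambda v(r))$. Since $\Lambda\le\boldsymbol{c_0}$ is small and $r\le 1$, the $\Lambda$-term is absorbed and an ODE comparison produces $v(r)\ge c_1 r^N$. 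Estimate (iii) follows by feeding the lower bound from (ii) into the half-space relative isoperimetric inequality in the regime $v(r)\le\tfrac12|B_r(x)\cap H|$, and by a symmetric argument using (iv) on the complement otherwise.

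For (iv), the hypothesis $B_{2r}(x)\cap\partial H\subset O$ is decisive: inside $B_{2r}(x)$ the penalized functional reduces to $P(\,\cdot\,;\,\cdot\,\cap H)+\sigma P(\,\cdot\,;\,\cdot\,\cap\partial H)$ with no further $O$-dependence, and the involution $F\mapsto H\setminus F$ then turns the minimality of $E$ in $B_{2r}(x)$ into the minimality (at the relevant scales $\rho\le r$) of $\tilde E=H\setminus E$ for the capillary energy with coefficient $-\sigma\in(-1,1)$, the additive term $\sigma\H^{N-1}(\,\cdot\,\cap\partial H)$ cancelling out. Rerunning the argument of (ii) with $\tilde E$ and $-\sigma$ in place of $E$ and $\sigma$ delivers $|B_r(x)\setminus E|\ge c_1|B_r(x)\cap H|$. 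The open representative with $\partial E=\partial^e E$ and $\H^{N-1}(\partial E\setminus\partial^* E)=0$ is then the standard consequence of (ii) and (iv), see \cite[Prop.~12.19]{Maggi12}. The delicate ingredient throughout is the capillary isoperimetric inequality, which alone cleanly absorbs the $\sigma$-weighted wet term into the right-hand side of the ODE for $v(r)$: a naive bound $|\sigma|P(E;B_r\cap\partial H)\le|\sigma|\omega_{N-1}r^{N-1}$ would leave an $r^{N-1}$ remainder at exactly the scale of perimeter one is trying to lower-bound and would be fatal. The duality trick in (iv) likewise bypasses a direct attack on the $\sigma<0$ case that would be painful.
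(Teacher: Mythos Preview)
Your approach to (i), (ii), and (iv) is sound, and for (ii) it is a genuinely different route from the paper: you invoke the capillary isoperimetric inequality to absorb the $\sigma$-weighted wet term directly, whereas the paper instead uses the divergence-theorem observation $P(E;\partial H\cap B_r(x))\le P(E\cap B_r(x);H)$ to control the wet part by the interior perimeter and then applies the \emph{Euclidean} isoperimetric inequality to $E\cap B_r(x)$. Both reach the same ODE.

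However, your argument for (iii) has a real gap. You split into the two regimes $|E\cap B_r(x)|\le\tfrac12|B_r(x)\cap H|$ (handled by (ii) plus relative isoperimetric inequality) and $|E\cap B_r(x)|>\tfrac12|B_r(x)\cap H|$ (handled ``by a symmetric argument using (iv) on the complement''). But (iv) carries the hypothesis $B_{2r}(x)\cap\partial H\subset O$, which \emph{fails} whenever $x$ is within distance $2r$ of the thin obstacle $\partial_{\partial H}O$. In that zone $H\setminus E$ is \emph{not} a $(\Lambda,\cdot)$-minimizer of $\mathcal F_{-\sigma}$---the obstacle constraint breaks the $E\leftrightarrow H\setminus E$ symmetry---so you have no lower bound on $|B_r(x)\setminus E|$ and your dichotomy collapses. (A similar issue arises when $B_r(x)\cap\partial H$ lies entirely in $\partial H\setminus\overline O$.)

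The paper deals with this case by a separate projection argument exploiting $O\in\mathcal B_R$ and $E\in\mathcal C_O$: near $\partial_{\partial H}O$ there is, at scale $r$, a definite portion $D\subset B_r(x)\cap\partial H$ lying outside $O$; since $\partial^*E\cap\partial H\subset\overline O$, vertical lines over $D$ that miss $\partial^*E\cap H$ lie entirely in $H\setminus E$, giving a volume lower bound for $|(H\setminus E)\cap B_r(x)|$; if instead most of $D$ is covered by the orthogonal projection of $\partial^*E\cap H\cap B_r(x)$ onto $\partial H$, that projection immediately gives $P(E;B_r(x)\cap H)\gtrsim r^{N-1}$. This obstacle-specific step is missing from your sketch, and without it (iii) is not established at points close to $\partial_{\partial H}O$.
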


\begin{proof} The proof of this proposition follows the lines of \cite[Lemma~2.8]{De-PhilippisMaggi15}; however,  some modifications are needed. Given $x\in H$ and $r<1$, we set $m(r):=|E\cap B_r(x)|$. Recall that for a.e. such $r$ we have  
$m'(r)=\H^{N-1}(E^{(1)}\cap \partial B_r(x))$ and $\H^{N-1}(\partial^*E\cap \partial B_r(x))=0$. For any such $r$ we set $F:=E\setminus B_r(x)$. Then, using Definition~\ref{def:lambdamin}, we have 
\begin{align}
P(E; B_r(x)\cap H)&\leq \H^{N-1}(\partial B_r(x)\cap E^{(1)})+\Lambda|E\cap B_r(x)|+|\sigma|P(E; \partial H\cap B_r(x))\nonumber\\
&\leq   C_1 r^{N-1} \label{e:de1}
\end{align}
for a suitable universal constant $C_1$. 
Observe now that  by an easy application of the divergence theorem we have
$$
P(E; \partial H\cap B_r(x))=P(E\cap B_r(x); \partial H)\leq P(E\cap B_r(x); H)\,.
$$
Thus, using also \eqref{e:de1}, we have
\begin{align}
P(E\cap B_r(x))&=P(E\cap B_r(x); H)+P(E\cap B_r(x); \partial H)\nonumber\\
&\leq 2 P(E\cap B_r(x); H)= 2 P(E; B_r(x)\cap H)+2m'(r)\label{e:de2}\\
&\leq 4 m'(r)+2\Lambda m(r)+2|\sigma| P(E; \partial H\cap B_r(x))\nonumber\\
&\leq 4 m'(r)+2\Lambda m(r)+2|\sigma| P(E\cap B_r(x); H)\nonumber\,.
\end{align}
Comparing the first term in the second line with the fourth line of the previous chain of inequalities we have in particular that 
$$
 P(E\cap B_r(x); H)\leq \frac{1}{1-|\sigma|}(2m'(r)+\Lambda m(r))\,.
$$
In turn, inserting the above estimate in \eqref{e:de2} and using the isoperimetric inequality we get
\begin{align*}
N\omega_N^{\frac1N}m(r)^{\frac{N-1}{N}}& \leq P(E\cap B_r(x))\leq \frac{2}{1-|\sigma|}(2m'(r)+\Lambda m(r))\\
&\leq \frac{2}{1-|\sigma|}(2m'(r)+\Lambda r\omega_N^{\frac1N}m(r)^{\frac{N-1}N})\\
&\leq 
\frac{2}{1-|\sigma|}(2m'(r)+\mathbf{c_0}\omega_N^{\frac1N}m(r)^{\frac{N-1}N})\,,
\end{align*}
where in the last inequality we used \eqref{e:basicassumption}. Now if $\frac{2\mathbf{c_0}}{1-|\sigma|}\leq 1$, then  from the previous inequality we get
 $$
 (N-1)\omega_N^{\frac1N}m(r)^{\frac{N-1}{N}}\leq \frac{4}{1-|\sigma|}m'(r)\,.
 $$
 Observe now that if in addition $x\in \partial^* E$, then $m(r)>0$ for all $r$ as above. Thus, we may divide the previous inequality by $m(r)^{\frac{N-1}{N}}$, and integrate the resulting differential inequality thus getting 
 $$
 |E\cap B_r(x)|\geq c_1 |B_r(x)\cap H|\,,
 $$
 for a suitable positive constant $c_1$ depending only on $N$ and $|\sigma|$. 
 
 To get the lower density estimate on the perimeter,  let $x\in \partial^*E\cap H$ and $r<1$. If 
 $\dist(x, \partial_{\pa H}O)>\frac{r}2$, then either  $B_\rho(x)\cap \partial H\subset \partial H\setminus\overline O$ for all $\rho\in (0, \frac{r}2)$ or $B_\rho(x)\cap \partial H\subset O$ for all $\rho\in (0, \frac{r}2)$. 
 
 In the first case, $E$ is a 
 $(\Lambda, r/2)$-minimizer of the standard perimeter in $B_{\frac{r}{2}}(x)$ under the constraint $E\subset H$. Then an easy truncation argument implies that $E$   is also an unconstrained 
 $(\Lambda, r/2)$-minimizer of the perimeter in the same ball. Then,  the lower density estimate on the perimeter follows from classical results (see \cite[Th. 21.11]{Maggi12}).
 
  In the second case,   it follows that  $H\setminus E$ is a $(\Lambda, r/2)$-minimizer of $\Fc_{-\sigma}$ in $B_{\frac{r}{2}}(x)$ and thus,   arguing as above, we get 
 $$
 |E\cap B_{\frac{r}{2}}(x)|\leq (1-c_1) |B_{\frac{r}{2}}(x)\cap H|
 $$
 and, in turn, by the relative isoperimetric inequality, setting for every $\tau\in (\frac12, 1)$ 
 $$
 \kappa(\tau):=\inf_{t\geq 0}\inf\Bigl\{\frac{P(F; B_1(te_1)\cap H)}{|F|^{\frac{N-1}{N}}}:\, F\subset B_1(te_1)\cap H,\, |F|\leq\tau |B_1(te_1)\cap H|\Bigr\}
 $$
 we obtain
$$
 P(E, B_{\frac{r}{2}}(x)\cap H)\geq \kappa(1-c_1) |E\cap B_{\frac{r}2}(x)|^{\frac{N-1}{N}}\geq  c_1 \kappa(1-c_1) |B_{\frac{r}{2}}(x)\cap H|^{\frac{N-1}{N}}\,.
$$
  Assume now that $\dist(x, \partial_{\pa H} O)\leq \frac{r}2$. Then, there exists  $y=(0, y')\in \partial_{\pa H} O$ such that  the ball $B_{\frac{r}2}(y)\cap H$ is contained in  $B_r(x)\cap H$. 
   Assume that 
 $|E\cap B_{\frac{r}2}(y)|\leq (1-\gamma)|B_{\frac{r}2}(y)\cap H|$ for some small $\gamma\in (0, \frac12)$ to be chosen later. By the relative isoperimetric inequality we get
$$
 P(E; B_{r}(x)\cap H)\geq \kappa_N\min\{|E\cap B_{r}(x)|, |(H\setminus E)\cap B_{r}(x)|\}^{\frac{N-1}{N}}.
$$
If  $\min\{|E\cap B_{r}(x)|, |(H\setminus E)\cap B_{r}(x)|\}=|E\cap B_{r}(x)|$, then (iii) follows from (ii). Otherwise
\begin{align*}
P(E; B_{r}(x)\cap H)&\geq\kappa_N|(H\setminus E)\cap B_{r}(x)|^{\frac{N-1}{N}} \nonumber \\
&\geq\kappa_N|(H\setminus E)\cap B_{\frac{r}{2}}(y)|^{\frac{N-1}{N}}\geq\kappa_N\Big(\frac{\gamma\omega_N}{2^{N+1}}\Big)^{\frac{N-1}{N}}r^{N-1}\,. \label{e:de3}
\end{align*}
 If instead 
 \begin{equation}\label{e:de3.5}
 |E\cap B_{\frac{r}2}(y)|> (1-\gamma)|B_{\frac{r}2}(y)\cap H|\,,
 \end{equation}
  then we denote by $\Pi$ the orthogonal projection on $\partial H$ of 
 $\partial^*E\cap B_{\frac{r}2}(y)\cap H$.  Set $D:= \{(0, z'):\,|z'-y'|<\frac{r}{4} \}\setminus O$ and observe that 
$$
 \H^{N-1}(D)\geq \tilde c\omega_{N-1}\Bigl(\frac{r}4\Bigr)^{N-1}\,,
$$
 for a universal constant $\tilde c>0$. Note that in the above inequality we are using the assumption that  $O\in \mathcal B_R$, with $\frac1R\leq \boldsymbol{c_0}$ (see \eqref{e:basicassumption}).
 Assume first 
  that $\H^{N-1}(\Pi\cap D)\geq \gamma \omega_{N-1}\bigl(\frac{r}4\bigr)^{N-1}$. In this case, we have
  \begin{equation}\label{e:de4}
  P(E; B_{\frac{r}2}(y)\cap H)\geq \gamma \omega_{N-1}\Bigl(\frac{r}4\Bigr)^{N-1}\,.
  \end{equation}
  If  instead $\H^{N-1}(\Pi\cap D)< \gamma \omega_{N-1}\bigl(\frac{r}4\bigr)^{N-1}$, then
  $$
  |(H\setminus E)\cap B_{\frac{r}2}(y)|\geq\frac{r}{4}\H^{N-1}(D\setminus\Pi)\geq (\tilde c-\gamma)\omega_{N-1}\Bigl(\frac{r}4\Bigr)^{N}\,.
  $$
  The last inequality contradicts \eqref{e:de3.5} if $\gamma$ is sufficiently small.  This contradiction shows that \eqref{e:de4} holds under the assumption
  \eqref{e:de3.5}. This completes the proof of (iii).
  
  Property (iv) follows from the volume estimate (ii) recalling that $H\setminus E$ is a $(\Lambda, r_0)$-minimizer for $\Fc_{-\sigma}$ in $B_{2r}(x)$.
  
  In order to show that $\pa E=\pa^eE$ it is enough to prove  that $\pa E\subset\pa^eE$. To this aim fix $x\in\pa E$.  If $x\in H$, from the density estimates (ii) and (iv) we have at once that
 $x\not\in E^{(0)}\cup E^{(1)}$, that is $x\in\pa^eE$. On the other hand, if $x\in\pa H$, then $|B(x,r)\setminus E|\geq \frac12|B(x,r)|$ and again, recalling (ii), we have that  $x\in\pa^eE$.   Hence $\H^{N-1}(\pa E\setminus\pa^*E)=\H^{N-1}(\pa^eE\setminus\pa^*E)=0$, where the last equality follows from Theorem~16.2 in \cite{Maggi12}. 

Finally, since, see \eqref{Euno}, $\pa E^{(1)}=\pa E=\pa^eE$, we have that $E^{(1)}\cap \pa E^{(1)}=\emptyset$, hence $E^{(1)}$ is an open set.
\end{proof}
We are now ready to prove the following compactness theorem for almost minimizers.
\begin{theorem}\label{th:compactness} Let $\Lambda_h\geq 0$, $r_h, R_h\in [1,+\infty]$ satisfying  
$$
\Lambda_h +\frac 1{R_h}\leq \boldsymbol{c_0}\,,
$$
and 
$$
\Lambda_h\to \Lambda_0\in [0,+\infty),\, \quad r_h\to r_0\in [1,+\infty], \quad R_h\to R_0\in [1, +\infty]\,,
$$
with $\boldsymbol{c_0}$ as in \eqref{e:basicassumption}. Let $E_h$ be a $(\Lambda_h, r_h)$-minimizer of $\mathcal F_{\sigma}$
with obstacle $O_h\in \mathcal B_{R_h}$. Then there exist a  (not relabelled) subsequence,  
a set $O\in \mathcal B_{R_0}$, and a set   $E$ of locally finite perimeter, such that $E_h\to E$ in $L^1_{loc}(\R^N)$, 
$O_h\to O$ in $C^{1,\alpha}_{loc}$  for all $\alpha\in (0,1)$, with the property that $E$ is a $(\Lambda_0, r_0)$-minimizer of $\Fc^O_\sigma$.  
Moreover, 
$$
\mu_{E_h}\wtos \mu_E\,,   \quad |\mu_{E_h}|\wtos |\mu_E|\,,
$$
as Radon measures. 
In addition, the following Kuratowski convergence type properties hold:
\[
\begin{split}
&\text{(i)\,\,for every $x\in\pa E$ there exists $x_h\in\pa E_h$ such that $x_h\to x$;} \\
& \text{(ii)\,\,if
 $x_h\in \overline{H\cap\pa E_h}$ and $x_h\to x$, then $x\in \overline{H\cap\pa E}$\,.}
 \end{split}
 \]
 Finally, if $\Lambda_0=0$ and $\partial H\setminus \overline{O}$ is connected, then either 
 $\partial E\cap(\partial H\setminus \overline{O})=\partial H\setminus \overline{O}$ or $\partial E\cap \partial H\subset\overline O$.
 In the latter case, we have that $E$ is a $(0, r_0)$-minimizer with obstacle $O$,  
$$|\mu_{E_h}|\res H\wtos |\mu_E|\res H$$
as Radon measures in $\R^N$, and 
$$
\partial E_h\cap \partial H\to \partial E\cap \partial H\quad\text{ in }L^1_{loc}(\pa H)\,.
$$
\end{theorem}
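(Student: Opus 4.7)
The plan is to (1) extract convergent subsequences, (2) transfer minimality to the limit by a gluing construction, (3) upgrade to matching measure convergence and deduce the Kuratowski properties from the density estimates, and (4) establish the dichotomy in the case $\Lambda_0=0$; the last step is the main obstacle. First, by Lemma~\ref{lm:osta} each $E_h$ is a $(\Lambda_h,r_h)$-minimizer of the penalized functional $\Fc^{O_h}_\sigma$ without the obstacle constraint. The perimeter bound in Proposition~\ref{p:densityestimates}(i) gives a uniform local $BV$-bound, so a subsequence satisfies $E_h\to E$ in $L^1_{loc}$ with $\mu_{E_h}\weakstarto\mu_E$, and along a further subsequence $|\mu_{E_h}|\weakstarto\nu$ for some locally finite Radon measure $\nu\ge|\mu_E|$; simultaneously Remark~\ref{rem:BR} yields $O_h\to O\in\mathcal{B}_{R_0}$ in $C^{1,\alpha}_{loc}$ for every $\alpha\in(0,1)$.

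To show that $E$ is a $(\Lambda_0,r_0)$-minimizer of $\Fc^O_\sigma$, I test against a competitor $F$ with $E\Delta F\Subset B_r(x_0)$ and $r<r_0$, choosing a generic $\rho\in(r,r_0)$ with $\nu(\pa B_\rho(x_0))=|\mu_F|(\pa B_\rho(x_0))=0$ and, via Fubini, $\mathcal{H}^{N-1}(\pa B_\rho(x_0)\cap\{\chi_{E_h}\neq\chi_E\})\to 0$. Setting $F_h:=(F\cap B_\rho(x_0))\cup(E_h\setminus B_\rho(x_0))$ and plugging it into the penalized minimality of $E_h$, lower semicontinuity (Remark~\ref{rem:lsc}) together with the $C^{1,\alpha}$-convergence $O_h\to O$ yields $\liminf_h\Fc^{O_h}_\sigma(E_h;B_\rho(x_0))\ge\Fc^O_\sigma(E;B_\rho(x_0))$. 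On the other side, the boundary jump on $\pa B_\rho(x_0)$ vanishes by the choice of $\rho$, $\Fc^{O_h}_\sigma(F_h;B_\rho(x_0))\to\Fc^O_\sigma(F;B_\rho(x_0))$ because $F$ is fixed and $O_h\to O$ in $C^{1,\alpha}$, and $\Lambda_h|E_h\Delta F_h|\to\Lambda_0|E\Delta F|$, giving the claimed minimality.

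Next, testing the minimality of $E$ with the reverse interpolation $(E\cap B_\rho(x_0))\cup(E_h\setminus B_\rho(x_0))$ produces $\limsup_h\Fc^{O_h}_\sigma(E_h;B_\rho(x_0))\le\Fc^O_\sigma(E;B_\rho(x_0))$, so combined with the lower bound we obtain equality of energies on good balls. Using $\Fc^O_\sigma\ge(1-|\sigma|)P$ this forces $\nu=|\mu_E|$ and hence $|\mu_{E_h}|\weakstarto|\mu_E|$. The Kuratowski properties follow from the density estimates: (i) uses the two-sided volume estimates Proposition~\ref{p:densityestimates}(ii),(iv) together with $L^1_{loc}$-convergence to produce a point of $\pa E_h$ inside every $B_r(x)$, and then a diagonalization; (ii) transfers the perimeter lower bound $P(E_h;B_\rho(x_h)\cap H)\ge c_1\rho^{N-1}$ to the limit by the weak-$*$ convergence of total variations, forcing $x\in\overline{H\cap\pa E}$.

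Finally, the dichotomy for $\Lambda_0=0$ is the main obstacle. The crucial observation is that $E_h\in\mathcal{C}_{O_h}$ forces $\pa^*E_h\cap(\pa H\setminus\overline{O_h})=\emptyset$, so the $BV$-trace of $\chi_{E_h}$ on the connected open set $\pa H\setminus\overline{O_h}$ has empty jump set and is therefore a constant $c_h\in\{0,1\}$; passing to a subsequence, $c_h\equiv c$. Arguing by contradiction, suppose neither alternative holds: there exist $y\in(\pa H\setminus\overline O)\setminus\pa E$ and $x\in(\pa H\setminus\overline O)\cap\pa E$. Around $y$ the $L^1_{loc}$-convergence and the density estimates give a fixed neighborhood of constant density, so $\pa E_h$ stays at uniform positive distance from $y$; around $x$, Kuratowski (i) produces accumulation of $\pa E_h$. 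Connecting $y$ to $x$ by a curve in $\pa H\setminus\overline O$ and exploiting $\Lambda_h\to 0$, a barrier-propagation argument along the curve — gluing in the empty or full layer dictated by $c$ and using the minimality with vanishing volume penalty to preclude any perimeter gain — propagates the uniform distance from $y$ all the way to $x$, contradicting the accumulation. This propagation step, which combines the $C^{1,1}$-regularity of $\pa_{\pa H}O$ with the vanishing $\Lambda_h$, is the technical heart of the statement. In the second alternative $E\in\mathcal{C}_O$ and $E$ is an obstacle minimizer; decomposing $|\mu_{E_h}|$ into bulk and boundary parts and using the Kuratowski/Hausdorff convergence $\overline{O_h}\to\overline O$ upgrades the measure convergence to $|\mu_{E_h}|\restr H\weakstarto|\mu_E|\restr H$ and yields $L^1_{loc}(\pa H)$-convergence of the wet regions.
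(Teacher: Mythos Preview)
Your compactness, minimality-transfer, and Kuratowski steps follow the same architecture as the paper's proof. Two technical points need tightening: (a) in the lower-semicontinuity step the paper does not simply invoke Remark~\ref{rem:lsc} for changing obstacles but introduces the $\delta$-perturbed sets $O^\delta=\{d_O < -\delta\,\mathrm{sign}\,\sigma\}$ so that $O_h\subset O^\delta$ (or the reverse inclusion) for large $h$, applies lower semicontinuity with a \emph{fixed} obstacle, and then lets $\delta\to 0$; (b) your one-line deduction of $|\mu_{E_h}|\weakstarto|\mu_E|$ from energy convergence via ``$\Fc^O_\sigma\ge(1-|\sigma|)P$'' does not work as stated---that inequality only yields a uniform bound on $P$, not the matching upper limit. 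The paper instead shows separately $\liminf_h P(E_h;B\setminus\overline{O^\delta})\ge P(E;B\setminus\overline{O^\delta})$ and $\limsup_h P(E_h;O_h\cap B)\le P(E;O\cap B)$ (with a further trick for $\sigma>0$, replacing $E$ by $H\setminus E$ on the boundary), and combines these with the convergence of $\Fc^{O_h}_\sigma$ to conclude $P(E_h;B)\to P(E;B)$.

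The dichotomy for $\Lambda_0=0$ is where your proposal diverges substantively from the paper, and here there is a genuine gap. You work at the level of the sequence $E_h$ and invoke an unspecified ``barrier-propagation along a curve'' to transport the clearance of $\partial E_h$ from $y$ to $x$; what the barrier is, and why minimality with vanishing $\Lambda_h$ forces the propagation, is precisely the content that is missing. The paper's argument is both different and much shorter: it works directly with the \emph{limit} $E$. In a neighborhood of any $x\in\partial E\cap(\partial H\setminus\overline O)$ the set $E$ is a $(0,r_0)$-minimizer of the ordinary perimeter (the capillarity weight and the obstacle are absent there), and $E$ lies on one side of $\partial H$. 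Every blow-up at $x$ is therefore a perimeter-minimizing cone contained in a half-space, hence itself a half-space, so $x$ is a regular point of $\partial E$. Then the strong maximum principle for the mean curvature equation forces $\partial E$ to coincide with $\partial H$ on the whole connected set $\partial H\setminus\overline O$. No recourse to the approximating sequence, barriers, or curves is needed. Your trace observation (that $\chi_{E_h}$ has zero trace on $\partial H\setminus\overline{O_h}$) is correct but does not by itself rule out $\partial E$ touching $\partial H\setminus\overline O$ at points where the trace of $\chi_E$ vanishes---this tangential contact is exactly the configuration that the regularity-plus-maximum-principle argument eliminates.
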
 
\begin{proof}
The proof goes along the lines of the  proof of \cite[Th. 2.9]{De-PhilippisMaggi15}, with some nontrivial modifications that we explain here.
First of all the existence of a non relabelled sequence $E_h$ converging in $L^1_{loc}(\R^N)$ to some set $E$ of locally finite perimeter  and such that $\mu_{E_h}\wtos \mu_E$ as Radon measures  follows at once since the sets $E_h$ have locally equibounded perimeters, thanks to Proposition~\ref{p:densityestimates} (i). The convergence, up to a subsequence, of $O_h$ to $O\in\mathcal B_{R_0}$ in $C^{1,\alpha}_{loc}$ follows from  Remark~\ref{rem:BR}.

We may assume without loss of generality that $r_0<+\infty$. We now want to show that $E$ is a $(\Lambda_0, r_0)$-minimizer of $\mathcal F^O_\sigma$. 
To this end, let us fix a ball $B_{r_0}(x)$,  and consider a competitor $F$ for $E$ such that $E\Delta F\Subset B_{r_0}(x)$. Note that for a.e. $r<r_0$  such that $E\Delta F\Subset B_r(x)$, we have $\H^{N-1}(\partial^*E\cap \partial B_r(x))=\H^{N-1}(\partial^*E_h\cap \partial B_r(x))=0$ for all $h$ and  $\H^{N-1}((E\Delta E_h)\cap \partial B_r(x))\to 0$.   Choose any such $r$ and  set $F_h:=(F\cap B_r(x))\cup (E_h\setminus B_r(x))$. Denoting by 
$O^\delta:=\{x\in \partial H: d_O(x)< -\delta\, \textrm{sign}\sigma\}$ with $\delta>0$ and $d_O$ being the signed distance from $\partial O$ restricted to $\pa H$ and  recalling that $E_h$ is a $(\Lambda_h, r_h)$-minimizer of $\mathcal F^{O_h}_\sigma$, we have by Remark~\ref{rem:lsc}, if $\sigma\leq0$
\begin{align*}
\mathcal F^{O^\delta}_\sigma(E; B_r(x))&\leq \liminf_{h}\mathcal F^{O^\delta}_\sigma(E_h; B_r (x))\leq \liminf_{h}\mathcal F^{O_h}_\sigma(E_h; B_r(x))\\
&\leq 
\limsup_{h}\mathcal F^{O_h}_\sigma(E_h; B_{r_0}(x))\leq \lim_{h}\big[\mathcal F^{O_h}_\sigma(F_h; B_{r_0}(x))+\Lambda_h|F_h\Delta E_h|\big]\\
&=\mathcal F^{O}_\sigma(F; B_{r_0}(x))+\Lambda_0|F\Delta E| \,.
\end{align*}
where we used the fact that $O_h\subset O^\delta$ for $h$ sufficiently large. Instead, if $\sigma>0$,  
\begin{align*}
\mathcal F^{O^\delta}_\sigma(E; B_r(x))&\leq \liminf_{h}\mathcal F^{O^\delta}_\sigma(E_h; B_r (x))\\
&\leq \liminf_{h}\big(\mathcal F^{O_h}_\sigma(E_h; B_r(x))+\H^{N-1}((O_h\setminus O^\delta)\cap B_r(x))\big)\\
&\leq 
\limsup_{h}\mathcal F^{O_h}_\sigma(E_h; B_{r_0}(x))+\H^{N-1}((O\setminus O^\delta)\cap B_r(x))\\
&\leq \lim_{h}\big[\mathcal F^{O_h}_\sigma(F_h; B_{r_0}(x))+\H^{N-1}((O\setminus O^\delta)\cap B_r(x))+\Lambda_h|F_h\Delta E_h|\big]\\
&=\mathcal F^{O}_\sigma(F; B_{r_0}(x))+\H^{N-1}((O\setminus O^\delta)\cap B_r(x))+\Lambda_0|F\Delta E| \,.
\end{align*}
Letting $\delta\to 0^+$ we have in both cases that 
\begin{align*}
\mathcal F^{O}_\sigma(E; B_{r_0}(x))&\leq \liminf_{h}\mathcal F^{O_h}_\sigma(E_h; B_{r_0}(x))\leq 
\limsup_{h}\mathcal F^{O_h}_\sigma(E_h; B_{r_0}(x))\\
&\leq \mathcal F^{O}_\sigma(F; B_{r_0}(x))+\Lambda_0|F\Delta E|\,.
\end{align*}
Thus, 
we have proved  that $E$ is a $(\Lambda_0, r_0)$-minimizer of $\mathcal F^O_\sigma$. 
Choosing $F=E$ in the previous inequality, we obtain
\begin{equation}\label{e:compact1}
\mathcal F^{O_h}_\sigma(E_h; B_{r_0}(x))\to \mathcal F^{O}_\sigma(F; B_{r_0}(x))\,.
\end{equation}
Assume now that $\sigma\leq 0$ and observe that by the lower semicontinuity of perimeter we get
$$
\begin{aligned}
P(E; B_{r_0}(x)\setminus \overline{O^\delta})) 
&\leq \liminf_{h}
P(E_h; B_{r_0}(x)\setminus \overline{O^\delta})) \\
&\leq  \liminf_{h}
P(E_h; B_{r_0}(x)\setminus  \overline{O_h}))
=
\liminf_{h}P(E_h; H\cap B_{r_0}(x))\,.
\end{aligned}
$$
Hence, letting $\delta\to 0^+$ we have 
\begin{equation}\label{e:compact2}
P(E;  B_{r_0}(x)\setminus \overline O )\leq \liminf_{h}P\big(E_h; H\cap B_{r_0}(x)\big)\,.
\end{equation}
On the other hand, by similar arguments
\begin{equation}\label{e:compact2.5}
\begin{aligned}
\limsup_hP(E_h; O_h\cap & B_{r_0}(x))=
\limsup_hP(E_h;  \overline{O_h}\cap B_{r_0}(x))\\
&\leq 
P(E;  \overline{O}\cap B_{r_0}(x))=P(E;  {O}\cap B_{r_0}(x))\,.
\end{aligned}
\end{equation}
From the above inequalities and \eqref{e:compact1} and the fact that $\sigma\leq0$ we deduce that 
\begin{equation}\label{e:compact3}
P(E_h; B_{r_0}(x))\to P(E; B_{r_0}(x))\,.
\end{equation}
From this inequality we deduce that  $|\mu_{E_h}|\wtos |\mu_E|$ as Radon measures in $\R^N$.
If $\sigma>0$, we observe that
$$
\sigma P(E_h;  O_h\cap B_{r_0}(x))=\sigma \H^{N-1}(O_h\cap B_{r_0}(x))-\sigma P(H\setminus E_h;  O_h\cap B_{r_0}(x))\,.
$$
Therefore, arguing as in the proof of \eqref{e:compact2.5} and recalling \eqref{e:compact2} we conclude that \eqref{e:compact3} holds  also in this case. The properties (i) and (ii) then follow by standard arguments using also the perimeter density estimate stated in Proposition~\ref{p:densityestimates} (iii), see also Step 4 of the proof of Theorem 2.9 in \cite{Maggi12}.

Let us now prove the last part the statement, and thus assume $\Lambda_0=0$ and that $\partial H\setminus \overline{O}$ is connected. 
 Assume that there exists  $x\in \partial E\cap(\partial H\setminus \overline{O})$. Since $E$ is $(0, r_0)$-minimizer of the standard perimeter in $\R^N\setminus\overline{O}$ and $E$ lies on one side of $\pa H$, we have that $x$ is a regular point of $\pa E$. Indeed every blow-up is a minimizing cone contained in a half space and thus is a half space (see for instance \cite[Lemma 3]{dm19} for a proof of this well known fact). In turn this  implies the local regularity.  Then the strong maximum principle for the mean curvature equation implies that $\pa E$ coincides with $\pa H$ in the connected component of $\pa H\setminus\overline O$ containing $x$, that is, in the whole $\pa H\setminus\overline O$.    
 
 To conclude the proof observe now that if $\partial E\cap \partial H\subset \overline O $, then from \eqref{e:compact1}
 and \eqref{e:compact2.5}, arguing as above, we  conclude that 
 $$
 P(E_h; B_r(x)\cap H)\to P(E; B_r(x)\cap H)
 $$
 and, in turn, $|\mu_{E_h}|\res H\wtos |\mu_E|\res H$ as Radon measures in $\R^N$. Now the very last part of the statement follows from \cite[Th.~3.88]{AmbrosioFuscoPallara00}.
\end{proof}

\section{ $\eps$-regularity}\label{sec:epsreg}
This section is devoted to the proof of the $\eps$-regularity Theorem~\ref{th:epsreg} for free boundary points lying on the thin obstacle $\pa_{\pa H}O$. Such a proof will split into two subsections.

\subsection{Partial Harnack inequality}

In the following for any $a<b$ and $\alpha\in\R$ we set 
$$
S_{a,b}^\alpha:=\Big\{x\in \R^N:a<x_N-\alpha x_1<b\Big\}.
$$
When $\alpha=\frac{\sigma}{\sqrt{1-\sigma^2}}$ we shall simply write
$$
S_{a,b}:=\Big\{x\in \R^N:a<x_N-\frac{\sigma x_1}{\sqrt{1-\sigma^2}}<b\Big\}.
$$
In the following we shall write a generic point $x\in\R^N$ as $(x',x_N)$ with $x'=(x_1,\dots,x_{N-1})$. With a slight abuse of notation we will also denote by $x'$ the generic point of $\{x_N=0\}\simeq\R^{N-1}$.
 
For $R>0$, $x'\in\R^{N-1}$ we set $D_R(x')=\{y'\in \R^{N-1}:\, |x'-y'|<R\}$ and $\mathcal C_R(x'):=D_R(x')\times\R$, $D^+_R(x')=D_R(x')\cap\{x_1>0\}$, $\mathcal C^+_R(x')=\mathcal C_R(x')\cap H$. When $x'=0$ we will omit the center. 

\begin{lemma}\label{lm:savin}
There exist two universal constants \(\eps_0\), \(\eta_0\in (0,1/2)\),  with the following properties: If $E\subset H$ is a $(\Lambda,1)$-minimizer of $\Fc_\sigma$ in $\R^N$ with obstacle $O\in\mathcal B_R$, 
 such that for $0<r\leq1$
 \begin{equation}\label{savin11}
 \partial E\cap \mathcal C_{2r}^+\subset S_{a,b}\qquad \text{with }(b-a)\leq \eps_0 r\,,
 \end{equation}
 \begin{equation}\label{rem:guido1}
 \Big\{x_N<\frac{\sigma x_1}{\sqrt{1-\sigma^2}}+a\Big\}\cap\mathcal C_{2r}^+
\subset E\,,
\end{equation}
 $\Lambda r^2<\eta_0(b-a)$ and 
\begin{equation}\label{116}
 \frac{r^2}{R}\leq \eta_0(b-a)\,,
\end{equation}
\begin{equation}\label{117}
\partial_{\partial H}O\cap \mathcal C_r\cap S_{a,b}\not=\emptyset\,,
\end{equation}
then there exist \(a'\ge a\), \(b'\le b\) with 
\[
b'-a'\le (1-\eta_0) (b-a)  
\]
 such that 
\[
\partial E\cap \mathcal C_{\frac{r}{2}}^+\subset S_{a', b'}.
\]
The same conclusion holds if assumption \eqref{116} is replaced by  
\begin{equation}\label{118}
\partial_{\partial H}O\cap \mathcal C_{2r}\cap S_{a,b}=\emptyset\,.
\end{equation}
\end{lemma}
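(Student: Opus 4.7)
The strategy I would follow is Savin's sliding-barrier method \cite{Savin}, adapted to the $\sigma$-capillary angle and to the thin obstacle $\partial_{\partial H} O$. Set $\alpha := \sigma/\sqrt{1-\sigma^2}$, so that the half-spaces $\{x_N - \alpha x_1 < c\}$ are the canonical hyperplanar $\sigma$-capillary minimizers and the strip $S_{a,b}$ is aligned with them. Arguing by contradiction, and using that $H \setminus E$ is itself a $(\Lambda,1)$-minimizer of $\mathcal{F}_{-\sigma}$ (so that the top and bottom of the strip play symmetric roles), I may assume the conclusion fails with $a' = a$: there exist points $y, z \in \partial E \cap \overline{\mathcal C_{r/2}^+}$ with $y \cdot (e_N - \alpha e_1) \geq b - \eta_0(b-a)$ and $z \cdot (e_N - \alpha e_1) \leq a + \eta_0(b-a)$, separated by a gap of order $(1-2\eta_0)(b-a)$.

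The first substantive step is to construct a smooth family of $\sigma$-capillary barriers. I would take
\[
\Sigma_t := \Bigl\{ x \in \overline H \cap \mathcal C_{2r} \,:\, x_N - \alpha x_1 - \tfrac{\kappa}{r}\,\psi(x') = t \Bigr\},
\]
with $\kappa$ a small parameter of size $\eta_0(b-a)/r$ and $\psi$ a smooth concave non-negative profile on $\R^{N-1}$ satisfying $\psi(z') = 0$, $\nabla \psi(z') = 0$, and $\psi(x') \geq c_\star |x'-z'|^2$ for $|x'-z'| \geq r/4$. A direct computation then shows that each $\Sigma_t$ has mean curvature bounded below by a positive multiple of $\kappa/r$ in $H$ and meets $\partial H$ at the Young angle $\sigma$ up to an error $O(\kappa)$. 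The smallness $\Lambda r^2 \leq \eta_0(b-a)$ is exactly what guarantees that this curvature bound strictly exceeds $\Lambda$, making $\Sigma_t$ a strict supersolution of the Young--Laplace problem for $\Lambda$-minimizers.

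I would then slide $\Sigma_t$ upwards in $t$. By \eqref{rem:guido1} the sublevel set $\{x_N - \alpha x_1 - \tfrac{\kappa}{r}\psi(x') < t\}$ lies entirely inside $E$ for $t$ sufficiently negative, while by \eqref{savin11} $\Sigma_t$ sits above $\partial E$ in $\overline{\mathcal C_r^+}$ for $t$ sufficiently positive, so there is a first $t^\star$ at which $\Sigma_{t^\star}$ touches $\partial E$ at some point $p$. Standard capillary-comparison arguments should now rule out every contact location except near $y$. If $p$ is interior to $H$, the strict mean-curvature inequality $H_{\Sigma_{t^\star}}(p) > \Lambda$ contradicts $\Lambda$-minimality (after regularising $\partial E$ near the contact point via the interior $\eps$-regularity of \cite{De-PhilippisMaggi15}); if $p$ is a regular boundary point of $\partial E$ lying in $(\partial H \setminus \overline O) \cup O$, the perturbed Young angle of $\Sigma_{t^\star}$ strictly violates the condition $\nu_E \cdot e_1 \geq \sigma$ inherited from $\Lambda$-minimality, the comparison being tested against the competitor $E \cup \{x_N - \alpha x_1 - \tfrac{\kappa}{r}\psi(x') < t^\star\}$. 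Contact on $\partial_{\partial H} O$ is excluded outright under \eqref{118}; in the alternative regime \eqref{116}--\eqref{117} it is handled by absorbing the $O(r^2/R)$-deviation of $\partial_{\partial H} O$ from its tangent hyperplane into a bounded modification of $\psi$, which is possible precisely because $r^2/R \leq \eta_0(b-a)$. Consequently $p$ must lie near $y$, which forces $y\cdot(e_N - \alpha e_1) - z\cdot(e_N - \alpha e_1) \leq C\kappa \leq C\eta_0(b-a)$, contradicting the previously established gap for $\eta_0$ universally small.

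The hardest part will be the analysis at the thin obstacle $\partial_{\partial H} O$, where the Young condition switches from equality (outside $\overline O$) to a one-sided inequality (inside $\overline O$) and where the barrier may need to cross the obstacle edge. The hypothesis $O \in \mathcal B_R$ together with $r^2/R \leq \eta_0(b-a)$ is exactly the $C^{1,1}$-smallness needed to control the geometry of $\partial_{\partial H} O$ uniformly throughout the sliding; verifying that the barrier-perturbation step is uniform in the obstacle data, and that the invocation of interior $\eps$-regularity at the contact point does not create a circularity with the main boundary $\eps$-regularity theorem to be proved, will be the most delicate bookkeeping in the argument.
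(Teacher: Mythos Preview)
Your sliding-barrier outline misses the central obstacle asymmetry and, as written, has a genuine gap. The claim that ``$H\setminus E$ is a $(\Lambda,1)$-minimizer of $\mathcal F_{-\sigma}$, so top and bottom play symmetric roles'' is false near $\partial_{\partial H}O$: the Young condition for $u^-$ (the lower envelope) is only the one-sided inequality $\partial_1 u^-/\sqrt{1+|\nabla u^-|^2}\le\sigma$ \emph{where $(x',u^-(x'))\in O$}, and is vacuous where $u^-$ coincides with the obstacle $\psi$ (see Lemma~\ref{pace}). Hence when your barrier $\Sigma_{t^\star}$ touches $\partial E$ from below at a boundary point with $u^-(x')=\psi(x')$, there is no Young-angle inequality to violate; ``absorbing the $O(r^2/R)$-deviation of $\partial_{\partial H}O$'' controls the curvature of $\psi$ but does not restore the missing Neumann condition. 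Your barrier description also has a sign inconsistency (a concave nonnegative profile with zero value and gradient at $z'$ is identically zero), and the conclusion that first contact must lie near the \emph{high} point $y$ is not justified by the sliding direction you chose.

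The paper's proof is organized quite differently. It first passes to the graph functions $u^\pm_\sigma$ and proves a measure-decay estimate (Lemma~\ref{lm:decay}) \`a la Savin: the key device is to replace $u^-$ by $w^-=\min\{u^-,\psi+\xi_0\cdot x'\}$, which remains a viscosity supersolution because the (tilted, trivially extended) obstacle has small mean curvature under \eqref{116}; after an even reflection this yields a supersolution on the full disk, to which Proposition~\ref{prop:savin} applies. Combined with the interior fact that $u^+=u^-$ off a small set (Lemma~\ref{covid}), this already gives a contradiction \emph{unless} the obstacle $\psi$ dips below $-3\eps/4$ somewhere. In that residual case Lemma~\ref{apparte} forces $\psi\le 0$ on $D_2\cap\partial H$, whence $u^+_\sigma\le\psi\le 0$ there, and a single \emph{Dirichlet} barrier $v_\eps=\eps w_{\mu_0}$ (solving a linear elliptic equation with boundary data $1$ on the lateral part and $1/4$ on $\partial H$) gives $u^+_\sigma\le(1-\tau)\eps$ on $D_1^+$ by the comparison principle for viscosity subsolutions---contradicting the assumed high point. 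The case split on the obstacle height is the mechanism that circumvents exactly the obstacle-coincidence contact your argument cannot handle.
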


%

\begin{remark}\label{rem:guido} We start noticing that  if $E\subset H$ is a $(\Lambda,1)$-minimizer of $\Fc_\sigma$ in $\R^N$ with obstacle $O\in\mathcal B_R$
 such that, for $0<r\leq1$ \eqref{savin11} and \eqref{rem:guido1} hold, then

\beq\label{guido1}
 \Big\{x_N<\frac{\sigma x_1}{\sqrt{1-\sigma^2}}+a\Big\}\cap\mathcal C_{2r}^+
\subset E\cap \mathcal C_{2r}^+\subset \Big\{x_N<\frac{\sigma x_1}{\sqrt{1-\sigma^2}}+b\Big\}\cap\mathcal C_{2r}^+
\eeq
provided $\e_0$ is small enough.
Indeed if the above inequalities were not true then $E\cap \mathcal C_{2r}^+$ would be contained in $S_{a,b}$ thus violating the volume density estimates in Proposition~\ref{p:densityestimates}. 
\end{remark}


 We will investigate the consequences of the flatness condition:
$$
\partial E\cap \mathcal \mathcal C^+_{2r}\subset S_{-\eps r, \eps r}\,.
$$
Thanks to  Remark~\ref{rem:guido} we may define two functions $u^{\pm}:D^+_{2r}\to \R$ as

\[
\begin{split}
u^+(x')&=\max \{ x_N: (x',x_N) \in \partial E\}
\\
u^-(x')&=\min \{ x_N: (x',x_N) \in \partial E\}.
\end{split}
\]

Note that $u^+$ is upper semicontinuous and $u^-$ is lower semicontinuous. In particular we may define for every $x'\in D_{2r}\cap\{x_1=0\}$
$$
u^-(x')=\inf\{\liminf_{h\to\infty}u^-(x_h'):\,x_h'\in D_{2r}^+,\,x_h'\to x'\}
$$
and, similarly, $u^+(x')=\sup\{\limsup_{h}u^-(x_h'):\,x_h'\in D_{2r}^+,\,x_h'\to x'\}$. Observe that $\{(x',x_N):\, x'\in D_{2r}^+,\,x_N<u^-(x')\}\subset E$ and thus from the above definition it follows that for 
$$
\{(x',x_N):\, x'\in D_{2r}\cap\{x_1=0\},\,x_N<u^-(x')\}\subset \partial E\cap\partial H\,.
$$
In the following we recall the notions of viscosity super- and subsolutions.
\begin{definition}\label{viscoM}
Let $\Omega\subset\R^{d}$ be an open set and let $v:\Omega\to\R$ be a lower (upper) semicontinuous function. Given  $\xi_0\in\R^{d}$, a constant $\gamma\in\R$  we say that $v$ satisfies the inequality 
\beq\label{visco1}
\Div \biggl(\frac{\nabla v+\xi_0}{\sqrt{1+|\nabla v+\xi_0|^2}}\biggr)  \leq \gamma \,\,(\geq\gamma)
\eeq
in the viscosity sense
if for any function $\varphi\in C^2(\Omega)$ such that $\varphi\leq v$ ($\varphi\geq v$) in a neighborhood of a point $x_0\in\Omega$, $\varphi(x_0)=v(x_0)$ one has
\beq\label{visco2}
\Div \biggl(\frac{\nabla\varphi+\xi_0}{\sqrt{1+|\nabla\varphi+\xi_0|^2}}\biggr)(x_0)\leq\gamma\,\,(\geq\gamma)\,.
\eeq
Moreover,   a lower (upper) semicontinuous function $v$ in $\overline\Omega$ satisfies the Neumann boundary condition
$$
\frac{\nabla v\cdot n}{\sqrt{1+|\nabla v|^2}}\leq \gamma \,\,\,(\geq \gamma)\,, \qquad\text{on $\Gamma$}
$$
in the viscosity sense, where $\Gamma$ is a  subset of $\partial\Omega$ and $n$ stands for the inner normal to $\partial\Omega$, if for every $\varphi\in C^2(\R^d)$ such that $\varphi\leq v$ ($\varphi\geq v$) in a neighborhood of a point $x_0\in\Gamma$, $\varphi(x_0)=v(x_0)$,    one has that
$$
\frac{\nabla\varphi(x_0)\cdot n}{\sqrt{1+|\nabla\varphi(x_0)|^2}}\leq \gamma \,\,\,(\geq \gamma)\,.
$$
\end{definition}
In the following we also need the following restricted notions of viscosity super- and subsolutions.
\begin{definition}\label{viscorestr}
Let $v:\Omega\to\R$ be a lower (upper) semicontinuous function, $\xi_0\in\R^{d}$ and  $\gamma\in\R$.  Given $\kappa>0$, we say that $v$ satisfies the inequality \eqref{visco1} in the $\kappa$-viscosity sense if \eqref{visco2} holds for any function $\varphi\in C^2(\Omega)$ such that $\varphi\leq v$ ($\varphi\geq v$) in a neighborhood of a point $x_0\in\Omega$, $\varphi(x_0)=v(x_0)$ and  $|\nabla\varphi(x_0)|\leq\kappa$.
\end{definition}

We now recall a crucial result, which is essentially contained  in  \cite{Savin}.
\begin{proposition}\label{prop:savin}
Let $\xi\in\R^{d}$ with $|\xi|\leq M$. 
There exist two constants $C_0>1$ and $\mu_0\in(0,1)$, depending only on $M$ and $d$, with the following properties: Let  $k$ be a positive integer and $\nu>0$ such that $C_0^k\nu\leq1$ and let  $v:\overline B_2\to(0,\infty)$ be a lower semicontinuous function, bounded from above, satisfying
\begin{equation}\label{prop:savin1}
\Div \biggl(\frac{\nabla u+\xi}{\sqrt{1+|\nabla u+\xi|^2}}\biggr)  \leq\nu
\end{equation}
in the  $(C_0^k\nu)$-viscosity sense.
If there exists a point $x_0\in B_{1/2}$ such that $v(x_0)\leq\nu$, then
$$
|\{v\leq C_0^k\nu\}\cap B_{1}|\geq(1-\mu_0^k)|B_1|\,.
$$
\end{proposition}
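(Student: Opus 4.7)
The plan is to iterate a one-step measure decay, following Savin's approach to partial Harnack for the minimal-surface-type operator, which is a variant of the ABP/Krylov--Safonov method.

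\emph{Base case $(k=1)$.} First I would show that there exist $C_0>1$ and $\delta\in(0,1)$, depending only on $M$ and $d$, such that the hypotheses of the proposition with $k=1$ force $|\{v\le C_0\nu\}\cap B_1|\ge\delta|B_1|$; setting $\mu_0:=1-\delta$ this is the $k=1$ assertion. The argument is of ABP type. One slides a family of concave paraboloid barriers $\varphi_{y,t}(x):=t-\tau|x-y|^2$, with $\tau$ of order $\nu$ and $y$ ranging over a ball of radius of order one, letting $t_y$ be the largest value for which $\varphi_{y,t}\le v$ on $\overline B_2$; each barrier then touches $v$ from below at a contact point $z_y$. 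The gradient of the barrier at $z_y$ satisfies $|-2\tau(z_y-y)|\le C\tau\le C_0\nu$, so the $(C_0\nu)$-viscosity condition applies to $\varphi_{y,t_y}$. Evaluating the minimal-surface-type operator on $\varphi_{y,t_y}$ and using the viscosity inequality produces the bound $v(z_y)\le C_0\nu$. The map $y\mapsto z_y$ has Jacobian controlled uniformly by the ellipticity of the linearized operator in the small-gradient regime $|\nabla\varphi+\xi|\le M+1$; a change-of-variables estimate, using that $v(x_0)\le\nu$ at $x_0\in B_{1/2}$ ensures the contact process does not run off to the boundary, then yields the required measure lower bound.

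\emph{Iteration.} Set $A_j:=\{v\le C_0^j\nu\}\cap B_1$ and argue inductively that $|B_1\setminus A_j|\le\mu_0^j|B_1|$. Suppose this holds at step $k-1$ but $|B_1\setminus A_k|>\mu_0^k|B_1|$. A Calder\'on--Zygmund--Vitali decomposition of $B_1\setminus A_k$ produces a ball $B_r(x_*)\subset B_1$ whose density of the bad set exceeds $1-\delta$, while the parent ball still satisfies the inductive density bound and hence contains a point $\tilde x$ with $v(\tilde x)\le C_0^{k-1}\nu$. Rescale via $\tilde v(y):=v(x_*+ry)/C_0^{k-1}$: using the scale invariance of the minimal-surface operator and of the $\kappa$-viscosity class, $\tilde v$ is again a $(C_0\nu)$-viscosity supersolution on $\overline B_2$ of an operator of the same form, with an updated shift $\tilde\xi$ still satisfying $|\tilde\xi|\le M$, with right-hand side $\le\nu$, and with $\tilde v\le\nu$ at some point in the rescaled $B_{1/2}$. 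Applying the base case to $\tilde v$ gives $|\{\tilde v\le C_0\nu\}\cap B_1|\ge\delta|B_1|$, which translates back to $|A_k\cap B_r(x_*)|\ge\delta|B_r(x_*)|$, contradicting the density of $B_1\setminus A_k$ in $B_r(x_*)$ and completing the induction.

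\emph{Main obstacle.} The delicate point is the quantitative bookkeeping through the rescaling: the $(C_0^k\nu)$-viscosity class has been defined precisely so that, under the dyadic rescaling of the induction step, its gradient window becomes the $(C_0\nu)$-viscosity class at the next level; and the smallness $C_0^k\nu\le1$ is what prevents the minimal-surface operator from losing its uniform ellipticity (which would in turn degrade $\delta$, and hence $\mu_0$, through the iterations). The constants $C_0$ and $\mu_0$ produced in the base case must be chosen so that these quantitative trade-offs close throughout the induction; this tight matching between gradient window and operator smallness is the whole content of the hypothesis $C_0^k\nu\le1$.
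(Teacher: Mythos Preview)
Your base case is essentially right and matches the paper's Lemmas in the Appendix: sliding paraboloids of opening $\sim\nu$, noting that the contact-point gradient is of size $\sim\nu\le C_0\nu$ so the restricted viscosity condition applies, and then an ABP/area estimate for the contact set.

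The gap is in the iteration. The rescaling $\tilde v(y):=v(x_*+ry)/C_0^{k-1}$ does \emph{not} put you back in the same class of equations. This is an anisotropic dilation of the graph (factor $r$ horizontally, factor $C_0^{k-1}$ vertically), and the minimal-surface operator is only invariant under isotropic dilations. Concretely, $\nabla v+\xi=\frac{C_0^{k-1}}{r}(\nabla\tilde v+\tilde\xi)$ with $\tilde\xi=\frac{r}{C_0^{k-1}}\xi$, so while $|\tilde\xi|\le M$ as you say, the denominator becomes $\sqrt{1+\bigl(\tfrac{C_0^{k-1}}{r}\bigr)^{2}|\nabla\tilde v+\tilde\xi|^{2}}$, not $\sqrt{1+|\nabla\tilde v+\tilde\xi|^{2}}$. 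The equation for $\tilde v$ is therefore \emph{not} of the form \eqref{prop:savin1} with a new shift; its ellipticity constants now depend on $C_0^{k-1}/r$, which is uncontrolled (and the right-hand side is not simply $\nu$ either). There is also a geometric mismatch: the Calder\'on--Zygmund parent ball, where you locate $\tilde x$ with $v(\tilde x)\le C_0^{k-1}\nu$, does not become $B_{1/2}$ after your rescaling.

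The paper circumvents this by \emph{never rescaling the function}. It proves the one-step estimate (its Lemma~\ref{lemma2.2}) directly on balls $B_r(x_0)\subset B_1$ of arbitrary radius: if $A_a\cap\overline B_r(x_0)\ne\emptyset$ then $|A_{C_1a}\cap B_{r/8}(x_0)|\ge c_1|B_r|$, where $A_a$ is the contact set at opening/level $a$. The iteration is then in the parameter $a$ (from $20\nu$ to $20C_1\nu$ to $20C_1^2\nu$, $\dots$), always for the \emph{same} function $v$ on the \emph{same} domain; this is admissible because a $(C_0^k\nu)$-viscosity supersolution is automatically a $(C_1\cdot 20C_1^{j-1}\nu)$-viscosity supersolution for every $j\le k$, provided $C_0\ge 20C_1$. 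A growing-ink-spots/covering lemma (Savin's Lemma~2.3) then converts this local density gain into the global measure decay. If you want to keep your Calder\'on--Zygmund framing, the fix is the same: prove the base step on balls of all radii $r$ (with constants independent of $r$), and iterate in the level $C_0^j\nu$ rather than by rescaling $v$.
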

Note that in \cite{Savin} condition \eqref{prop:savin1} is assumed to hold in the usual viscosity sense and not in the $(C_0^k\nu)$-viscosity sense considered here. However the proof of \cite{Savin} extends to our framework without significant changes, see  the Appendix where we provide the details for the reader's convenience.

We now prove a comparison lemma which is a variant of Lemma 2.12 in \cite{De-PhilippisMaggi15}. To this aim, in the following, given $0<a<r$ we set
$$
D_{r,a}:=D_r\cap\{|x_1|<a\}\quad\text{and} \quad D_{r,a}^+:=D^+_r\cap\{x_1<a\}\,.
$$
\begin{lemma}\label{lm:DPCM}
Let $E\subset H$ be a $0$-minimizer  of $\Fc_{-\sigma}$ with obstacle $O\in\mathcal B_R$ for some $R>0$, let $0<\eta<r$ and let $u_0\in C^2(\overline{D_{r,\eta}^+})$ be such that
\beq\label{DPCM1}
\begin{split}
\Div\bigg(\frac{\nabla u_0}{\sqrt{1+|\nabla u_0|^2}}\bigg)\geq0\qquad&\text{in $D_{r,\eta}^+,$} \\
 \frac{\pa_1 u_0}{\sqrt{1+|\nabla u_0|^2}}\geq\sigma\qquad  &\text{on $\pa D_{r,\eta}^+\cap\{x_1=0\}.$}
\end{split}
\eeq
Assume also that $E$ is bounded from below,
\beq\label{DPCM2}
E\cap[(\pa D_{r,\eta}^+\cap\{x_1>0\})\times\R]\subset\{(x',x_N)\in(\pa D_{r,\eta}^+\cap\{x_1>0\})\times\R:\,x_N\geq u_0(x')\}
\eeq
and that 
\beq\label{DPCM3}
\H^{N-1}\big(\pa E\cap[(\pa D_{r,\eta}^+\cap\{x_1>0\})\times\R]\big)=0\,.
\eeq
Then, $$
E\cap(D_{r,\eta}^+\times\R)\subset\{(x',x_N)\in D_{r,\eta}^+\times\R:\,x_N\geq u_0(x')\}\,.
$$
\end{lemma}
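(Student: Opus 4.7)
The plan is a sliding-barrier argument adapting \cite[Lemma~2.12]{De-PhilippisMaggi15} to the presence of the thin obstacle. For $s\in\R$ set $u_s:=u_0-s$ and $F_s:=\{(x',x_N)\in D_{r,\eta}^+\times\R:\,x_N\geq u_s(x')\}$. Since $u_0$ is bounded on $\overline{D_{r,\eta}^+}$ and $E$ is bounded from below, $E\cap(D_{r,\eta}^+\times\R)\subset F_s$ for all $s$ large enough, so
\[
s^\ast:=\inf\{s\geq 0:\,E\cap(D_{r,\eta}^+\times\R)\subset F_s\}
\]
is finite and the goal is to show $s^\ast=0$.

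Suppose, for contradiction, $s^\ast>0$. A standard compactness argument combined with the density estimates of Proposition~\ref{p:densityestimates} produces a touching point $\bar x=(\bar x',\bar x_N)\in\partial E$ with $\bar x_N=u_0(\bar x')-s^\ast$ and $\bar x'\in\overline{D_{r,\eta}^+}$. Hypothesis \eqref{DPCM2} forces $E$ to lie strictly above the graph of $u_{s^\ast}$ on the lateral face $\partial D_{r,\eta}^+\cap\{x_1>0\}$ (since $s^\ast>0$), and \eqref{DPCM3} rules out $\partial E$-mass there; hence $\bar x'$ lies either in the open set $D_{r,\eta}^+$ (interior case) or on the wall $D_r\cap\{x_1=0\}$ (boundary case).

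To turn the non-strict inequalities in \eqref{DPCM1} into strict ones I would perturb the barrier locally: replace $u_0$ with $u_0-\delta\Phi$ on a small disk $D'\ni\bar x'$ disjoint from the lateral face, where $\Phi\in C^2(\overline{D'})$ is nonnegative, vanishes on $\partial D'\setminus\{x_1=0\}$, and satisfies $\Phi(\bar x')>0$; such a $\Phi$ can be chosen (by ellipticity of the mean-curvature operator, e.g.\ as a suitable quadratic-like bump) so that for $\delta>0$ small and some $c>0$
\[
\Div\Bigl(\tfrac{\nabla(u_0-\delta\Phi)}{\sqrt{1+|\nabla(u_0-\delta\Phi)|^2}}\Bigr)\geq c\delta \ \text{on }D',\qquad \tfrac{\partial_1(u_0-\delta\Phi)}{\sqrt{1+|\nabla(u_0-\delta\Phi)|^2}}\geq\sigma+c\delta\ \text{on }D'\cap\{x_1=0\}.
\]
Re-running the sliding argument on $D'\times\R$ with this perturbed barrier produces, as $\delta\to 0$, a touching point $\bar x^\delta\to\bar x$ at which both strict inequalities hold.

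Finally, at $\bar x^\delta$ I would contradict the $(0,r_0)$-minimality of $E$ for $\mathcal F^O_{-\sigma}$ (Lemma~\ref{lm:osta}) by comparing $E$ with $E\cup A_\varepsilon$, where $A_\varepsilon$ is the thin slab $\{(x',x_N)\in D'\times\R:\,u_0(x')-\delta\Phi(x')-s^{\ast,\delta}<x_N<u_0(x')-\delta\Phi(x')-s^{\ast,\delta}+\varepsilon\}$ intersected with a small ball around $\bar x^\delta$ (here $s^{\ast,\delta}$ is the new touching parameter). Applying the divergence theorem to the calibration $X:=(\nabla(u_0-\delta\Phi),-1)/\sqrt{1+|\nabla(u_0-\delta\Phi)|^2}$, the strict mean-curvature inequality produces a negative volume term, and in the boundary case the strict Neumann-angle inequality yields an additional negative contribution from the boundary coefficient $-\sigma$ on $\partial H$, giving $\mathcal F^O_{-\sigma}(E\cup A_\varepsilon)-\mathcal F^O_{-\sigma}(E)\leq -c(\delta)\varepsilon+o(\varepsilon)<0$ for $\varepsilon$ small, contradicting minimality. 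Admissibility with respect to the obstacle is automatic, since $A_\varepsilon$ modifies $E$ strictly inside $H$ in the interior case, and in the boundary case its trace on $\partial H$ is contained in that of $E$, hence in $\overline O$. The main obstacle is the boundary case, where matching the sign of the Neumann inequality in \eqref{DPCM1} with the boundary coefficient $-\sigma$ of $\mathcal F^O_{-\sigma}$ is exactly what allows the calibration to yield a \emph{strictly} negative perimeter drop.
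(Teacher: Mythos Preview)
Your sliding--perturbation strategy is a genuinely different route from the paper's, but the final competitor step has a real gap. The claimed inequality
\[
\Fc^O_{-\sigma}(E\cup A_\varepsilon)-\Fc^O_{-\sigma}(E)\leq -c(\delta)\varepsilon+o(\varepsilon)
\]
does not follow from the calibration you describe. When you \emph{add} the slab $A_\varepsilon$ to $E$, the entire bottom face of $A_\varepsilon$ (the graph of the perturbed barrier) becomes new boundary of $E\cup A_\varepsilon$, since $E$ lies above it; this is an $O(1)$ area gain, not an $O(\varepsilon)$ one. Running the divergence theorem on $A_\varepsilon\setminus E$ with $X=(\nabla v,-1)/\sqrt{1+|\nabla v|^2}$ yields
\[
c\delta\,|A_\varepsilon\setminus E|\le \H^{N-1}(\text{bottom}\cap E^c)-\H^{N-1}(\text{top}\cap E^c)-\int_{\partial E\cap A_\varepsilon}X\cdot\nu_E+\ldots,
\]
which bounds $\H^{N-1}(\partial E\cap A_\varepsilon)$ from \emph{above} by the bottom area minus a positive term; combined with the perimeter bookkeeping for $E\cup A_\varepsilon$ this gives an energy \emph{increase}, not a decrease. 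Replacing the competitor by $E\setminus A_\varepsilon$ does not help either, since one has no lower bound on $|A_\varepsilon\cap E|$ of order $\varepsilon$: the density estimates control balls, not thin slabs, and $E$ could avoid the slab almost entirely near the touching point.

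The paper avoids all of this by a single global comparison. It takes $G:=(E\setminus C)\cup(E\cap F^+)$ with $C=D_{r,\eta}^+\times\R$ and $F^\pm=\{x_N\gtrless u_0(x')\}$, i.e.\ it \emph{removes} the bad part $E\cap F^-$. Since $G\subset E$, the obstacle constraint is preserved automatically. Minimality gives one inequality between $\H^{N-1}(\partial E\cap F^-)$ and $\H^{N-1}(E\cap\partial F^-\cap C)$; the calibration $X=(\nabla u_0,-1)/\sqrt{1+|\nabla u_0|^2}$, via the divergence theorem on $E\cap F^-$ (using \emph{both} conditions in \eqref{DPCM1} to control the $\partial H$ contribution), gives the reverse inequality. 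Equality forces $X\cdot\nu_E\equiv 1$ on $\partial E\cap F^-$, and then a second vector field $Y(x)=(x',x'\cdot\nabla u_0(x'))$ (which is tangent to both $\partial E\cap F^-$ and $\partial F^-$) is integrated over $E\cap F^-$ to conclude $|E\cap F^-|=0$. No sliding, no perturbation to strict inequalities, and no local slab are needed.
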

\begin{proof} We adapt the argument of \cite[Lemma~2.12]{De-PhilippisMaggi15}. Denote
$$
C:=D_{r,\eta}^+\times\R
$$
and set 
$$
F^{\pm}:=\{(x', x_N)\in C:\, x_N\gtrless u_0(x')\}\,.
$$
Consider the competitor given by
$$
G:=(E\setminus C)\cup (E\cap F^+)\,,
$$
which is  an admissible compact perturbation of $E$ since $E$ is bounded from below. From the minimality of $E$ we then obtain
\beq\label{compa1}
\begin{split}
\H^{N-1}(\pa E&\cap F^-)+\H^{N-1}(\pa E\cap \pa F^-\cap C\cap\{\nu_E=\nu_{F^-}\})\\
&\quad-\sigma\H^{N-1}(\pa E\cap \pa F^-\cap \pa H)\\ 
&\leq \H^{N-1}(E\cap \pa F^-\cap C)\,.
\end{split}
\eeq
Denote
$$
X(x):=\frac{1}{\sqrt{1+|\nabla u_0(x')|^2}}(\nabla u_0(x'), -1)\,,
$$
and observe that $\Div X\geq 0$ in $C$, thanks to the first assumption in \eqref{DPCM1}. Then by the Divergence Theorem we obtain 
\[
\begin{split}
0\leq\int_{E\cap F^-}\Div X\, dx&=\int_{\pa E\cap F^-}X\cdot\nu_E\, d\H^{N-1}+ \int_{\pa F^-\cap E\cap C}X\cdot\nu_{F^-}\, d\H^{N-1}\\
&+\int_{\pa E\cap \pa F^-\cap C\cap \{\nu_E=\nu_{F^-}\}}X\cdot\nu_E\, d\H^{N-1}-\int_{\pa E\cap \pa F^-\cap \pa H}X\cdot e_1\, d\H^{N-1}\,.
\end{split}
\]
Observing that $X\cdot\nu_{F^-}=-1$ on $\pa F^-\cap C$, from the previous inequality, we get, thanks to the second assumption in \eqref{DPCM1},
\[
\begin{split}
\sigma \H^{N-1} (\pa E\cap \pa F^-\cap \pa H)&\leq \int_{\pa E\cap \pa F^-\cap \pa H} \frac{\pa_1 u_0}{\sqrt{1+|\nabla u_0|^2}}\, d\H^{N-1}\\
&\leq \int_{\pa E\cap F^-}X\cdot\nu_E\, d\H^{N-1}+\int_{\pa E\cap \pa F^-\cap C\cap \{\nu_E=\nu_{F^-}\}}X\cdot\nu_E\, d\H^{N-1}\\
&\quad - \H^{N-1}(\pa F^-\cap E\cap C)\\
&\leq  \H^{N-1}(\pa E\cap F^-)+\H^{N-1}(\pa E\cap \pa F^-\cap C\cap \{\nu_E=\nu_{F^-}\})\\
&\quad- \H^{N-1}(\pa F^-\cap E\cap C)\\
&\leq \sigma \H^{N-1} (\pa E\cap \pa F^-\cap \pa H)\,,
\end{split}
\]
where the last inequality follows from \eqref{compa1}.  Thus all the inequalities above are equalities and, in turn, $X\cdot\nu_E=1$ on $\pa E\cap F^-$. We now conclude by applying the Divergence Theorem in $E\cap F^-$ to the vector field $Y(x'):=(x', x'\cdot \nabla u_0(x'))$. Since $Y\cdot \nu_{E\cap F^-}=0$ $\H^{N-1}$-a.e. on $ \pa E\cap  F^-$ and on $\pa F^-\cap C$, recalling \eqref{DPCM2} and \eqref{DPCM3}, we obtain 
$$
(N-1)|E\cap F^-|=\int_{E\cap F^-}\Div Y\, dx=0\,,
$$
and the conclusion follows. 
\end{proof}

\begin{lemma}\label{pace}
Let $E\subset H$ is a $(\Lambda,1)$-minimizer of $\Fc_\sigma$ in $\R^N$ with obstacle $O\in\mathcal B_R$. Assume also that  \eqref{guido1} holds for some $a, b$ and with $\frac{\sigma}{\sqrt{1-\sigma^2}}$ replaced by $\alpha$ for some $\alpha\in\R$, with $r=1$, so that the functions $u^{\pm}$ are well defined on $D^+_2$. Then the functions $u^+$, $u^-$
satisfy   
\begin{equation}\label{irene-1}
\begin{cases}
\displaystyle\Div \bigg(\frac{\nabla u^+}{\sqrt{1+|\nabla u^+|^2}}\bigg)  \geq-\Lambda \,\,\,\, \text{in $D^+_2$,} &\vspace{0.2cm}\cr
\displaystyle\Div \biggl(\frac{\nabla u^-}{\sqrt{1+|\nabla u^-|^2}}\biggr)  \leq\Lambda \,\,\,\, \text{in $D^+_2$,} &\vspace{0.2cm}\cr
\displaystyle\frac{\pa_1 u^+}{\sqrt{1+|\nabla u^+|^2}}\geq\sigma \,\,\,\,\text{on $\Gamma$,} & \vspace{0.2cm}\cr
\displaystyle\frac{\pa_1 u^-}{\sqrt{1+|\nabla u^-|^2}}\leq\sigma \,\,\,\,\text{on $\Gamma\cap\{x':\,(x',u^-(x'))\in O\}$,
} &
\end{cases}
\end{equation}
in the viscosity sense, where $\Gamma:= D_2\cap\{x_1=0\}$.
\end{lemma}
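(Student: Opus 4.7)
The plan is a viscosity argument combining the $\Lambda$-minimality of $E$ with the comparison principle of Lemma~\ref{lm:DPCM}. Preliminary reductions: from \eqref{guido1} and Proposition~\ref{p:densityestimates}(ii),(iv), $E\cap(D_2^+\times\R)$ is sandwiched between the graphs of $u^-$ and $u^+$ (with $\{x_N<u^-(x')\}\subset E$ and $\{x_N>u^+(x')\}\subset H\setminus\overline{E}$ in the cylinder), which in particular yields the semicontinuity of $u^\pm$; moreover, since $\tilde E:=H\setminus E$ is locally a $(\Lambda,1)$-minimizer of $\mathcal F_{-\sigma}$ with the same obstacle (Proposition~\ref{p:densityestimates}(iv)), the two pairs of inequalities in \eqref{irene-1} are dual under $E\leftrightarrow H\setminus E$, $\sigma\leftrightarrow -\sigma$ (which swaps $u^+$ and $u^-$), so it suffices to prove the first and third inequality.

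For the first (interior) inequality, let $\varphi\in C^2$ satisfy $\varphi\ge u^+$ near $x_0'\in D_2^+$ and $\varphi(x_0')=u^+(x_0')$, and suppose by contradiction that
\[
\Div\!\left(\frac{\nabla\varphi}{\sqrt{1+|\nabla\varphi|^2}}\right)\!(x_0')<-\Lambda-2\eta
\]
for some $\eta>0$. Adding the quadratic correction $\delta(|x'-x_0'|^2-(\rho/2)^2)$ on a disk $D_\rho(x_0')\Subset D_2^+$ produces $\tilde\varphi$ still satisfying a strict mean curvature inequality $<-\Lambda-\eta$, with $\tilde\varphi=\varphi\ge u^+$ on $\partial D_{\rho/2}(x_0')$ and $\tilde\varphi(x_0')<u^+(x_0')$. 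The competitor
\[
G:=\bigl(E\setminus(D_{\rho/2}(x_0')\times\R)\bigr)\cup\bigl\{(x',x_N):\,x'\in D_{\rho/2}(x_0'),\,x_N<\tilde\varphi(x')\bigr\}
\]
is admissible in $\mathcal C_O$ (the perturbation is compactly contained in $H$), and the divergence theorem applied to the calibration $X=(\nabla\tilde\varphi,-1)/\sqrt{1+|\nabla\tilde\varphi|^2}$ on $E\setminus G$, mirroring the computation in the proof of Lemma~\ref{lm:DPCM}, gives $\mathcal F_\sigma(E)-\mathcal F_\sigma(G)\ge (\Lambda+\eta)\,|E\Delta G|$, contradicting the $\Lambda$-minimality.

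For the Young inequality for $u^+$ on $\Gamma$, let $\varphi\ge u^+$ touch $u^+$ from above at $x_0\in\Gamma$ and suppose $\partial_1\varphi(x_0)/\sqrt{1+|\nabla\varphi(x_0)|^2}<\sigma-2\eta$. I would construct a $C^2$ barrier $u_0$ on a half-disk $\overline{D_{r,\eta'}^+}$ (with $x_0$ on its $\{x_1=0\}$ face) satisfying the hypotheses of Lemma~\ref{lm:DPCM} applied to $\tilde E$: a strict interior mean curvature inequality $\ge 0$, a strict Young inequality $\partial_1 u_0/\sqrt{1+|\nabla u_0|^2}\ge\sigma$ on $\partial D_{r,\eta'}^+\cap\{x_1=0\}$, and $u_0\le u^-$ on the lateral boundary $\partial D_{r,\eta'}^+\cap\{x_1>0\}$, together with $u_0(x_0)>u^+(x_0)$. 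A natural ansatz is $u_0=\varphi+c_1x_1+\delta_0+q(x')-L\psi(x')$, where $c_1>0$ is chosen large enough that the addition of $c_1x_1$ produces a strict Young excess at $x_0$ (using the pointwise violation hypothesis), $\delta_0>0$ lifts $u_0$ slightly above $\varphi$ at $x_0$, $q$ is a small convex quadratic providing a strict interior mean curvature excess that absorbs the $\Lambda r^2$ error, and $L\psi$ is a smooth cut-off supported in a collar of the lateral boundary pushing $u_0$ below $u^-$ there without spoiling the conditions at $x_0$. Lemma~\ref{lm:DPCM} then forces $\tilde E\cap(D_{r,\eta'}^+\times\R)\subset\{x_N\ge u_0\}$, hence $u^+\ge u_0$ in the interior; letting $x'\to x_0$ and using the upper semicontinuity of $u^+$ gives $u^+(x_0)\ge u_0(x_0)$, contradicting $u_0(x_0)>u^+(x_0)$. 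The dual statements for $u^-$ follow by applying the same argument to $E$ rather than $\tilde E$; the restriction to $\Gamma\cap\{(x',u^-(x'))\in O\}$ is precisely what ensures that the barrier-based modification produces a competitor that remains inside $\mathcal C_O$.

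The main obstacle is Step~3: producing a single smooth barrier that simultaneously satisfies the strict interior, Young, and lateral separation inequalities, starting only from a pointwise violation of the Young condition for $\varphi$ at $x_0$, and arranging the parameters $c_1,\delta_0,q,L,r,\eta'$ so that the $(\Lambda,1)$-error and the obstacle curvature $1/R$ are absorbed into the strict excesses. The second delicate point, explaining the restriction on $u^-$, is the admissibility of the modified set in the constraint class $\mathcal C_O$.
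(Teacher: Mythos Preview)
Your interior calibration argument is close in spirit to the paper's, but the competitor $G$ you build (replacing $E$ by a subgraph inside a cylinder) need not glue to $E$ on $\partial D_{\rho/2}(x_0')\times\R$, since $E$ is not a subgraph there; the paper instead sets $E_\eta:=E\cup\{x\in E^c:\,x_N<\varphi(x')+\eta\}$ (for $u^-$, with $\varphi$ touching from below at a strict minimum), which only adds a compactly supported set and makes the energy comparison a one-line divergence computation.

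The real gap is the boundary step, and the missing idea is a \emph{blow-up}. Two problems with your plan: (a) the duality $E\leftrightarrow H\setminus E$, $\sigma\leftrightarrow-\sigma$ does not swap $u^+$ and $u^-$ (they are defined from the common boundary $\partial E=\partial(H\setminus E)$ in $H$), and $H\setminus E$ is a $\mathcal F_{-\sigma}$-minimizer only where $\partial H\subset O$ locally---this is precisely the asymmetry between the third inequality (valid on all of $\Gamma$) and the fourth (only where $(x',u^-(x'))\in O$), so your reduction cannot yield both; (b) Lemma~\ref{lm:DPCM} assumes $0$-minimality, and a barrier at the original scale would have to simultaneously exceed $u^+$ at $x_0'$, fall below $u^-$ on the lateral boundary, have strictly positive mean curvature absorbing $\Lambda$, and satisfy a Young condition that $\varphi$ by hypothesis violates---you do not build such a barrier. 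The paper sidesteps both difficulties: rescale $E_n:=n\bigl(E-(x_0',u^-(x_0'))\bigr)$, so that $\Lambda_n\to0$, $\varphi$ becomes the linear function $\varphi_\infty(x')=\nabla\varphi(x_0')\cdot x'$, and (since $(x_0',u^-(x_0'))\in O$) the obstacle disappears. Theorem~\ref{th:compactness} gives $E_n\to E_\infty$, an unconstrained $0$-minimizer of $\mathcal F_\sigma$ with $0\in\overline{\partial E_\infty\cap H}$ and $E_\infty\supset\{x_N<\varphi_\infty(x')\}$; an explicit quadratic perturbation $\psi_\varepsilon(x')=\varphi_\infty(x')-\varepsilon x_1-\tfrac{\varepsilon^2}{2}|x'|^2+\tfrac{\varepsilon}{2}x_1^2$ then feeds directly into Lemma~\ref{lm:DPCM} applied to $H\setminus E_\infty$, contradicting $0\in\overline{\partial E_\infty\cap H}$. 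For the third inequality the paper reflects $E$ in $x_N$ and applies Lemma~\ref{lm:DPCM} to the blown-up set $\tilde E_\infty$ itself rather than to its complement: since the competitor in that lemma only removes mass, the obstacle constraint is automatically preserved, which is why no restriction on $\Gamma$ is needed there.
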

\begin{proof}  

{\bf Step 1.} We prove the statement first for $u^-$.

We  fix $x'_0\in D^+_2$ and take a function $\varphi\in C^2(D^+_2)$ such that $x'_0$ is a  strict minimum point for $u^- -\varphi$ in $D^+_2$  and $u^-(x'_0)=\varphi(x'_0)$.
Assume by contradiction that 
\begin{equation}\label{irene0}
\Div \biggl(\frac{\nabla \varphi}{\sqrt{1+|\nabla \varphi|^2}}\biggr)> \Lambda
\end{equation}
in a neighborhood of $x'_0$. For $\eta>0$  set
$$
E_\eta:=E\cup \{x\in E^c:\, x_N<\varphi(x')+\eta\}
$$
and note that if $\eta$ is sufficiently small is an admissible competitor for $E$.
Note that by $\Lambda$-minimality, for all but countably many such $\eta$, 
\begin{align}
&\Fc_\sigma (E; \mathcal C_2) - \Fc_\sigma (E_\eta; \mathcal C_2)\nonumber \\
&=
\H^{N-1}(\partial E\cap \{x_N< \varphi(x')+\eta\})-\H^{N-1}(E^c\cap \{x_N
= \varphi(x')+\eta\})\leq \Lambda |E_\eta\setminus E|\,.\label{irene1}
\end{align}
On the other hand, setting 
$$
X:=\biggl(\frac{\nabla \varphi}{\sqrt{1+|\nabla \varphi|^2}}, -\frac{1}{\sqrt{1+|\nabla \varphi|^2}}\biggr)\,,
$$
by \eqref{irene0} and \eqref{irene1} we have, if $\eta$ is small enough, 
\begin{align*}
&\Lambda |E_\eta\setminus E|<\int_{E_\eta\setminus E} \Div X\, dx\\
&= 
-\int_{\partial E\cap \{x_N< \varphi(x')+\eta\}} X\cdot \nu_{E}\, d\H^{N-1}+
\int_{ E^c\cap \{x_N= \varphi(x')+\eta\}} X\cdot \nu_{E_\eta}\, d\H^{N-1}\\
&\leq  \H^{N-1}(\partial E\cap \{x_N< \varphi(x')+\eta\})-\H^{N-1}(E^c\cap \{x_N
= \varphi(x')+\eta\})\leq \Lambda |E_\eta\setminus E|\,,
\end{align*}
which yields a contradiction.

{\bf Step 2.} 
Let $\varphi\in C^2( D_2)$ such that $\varphi\leq u^-$ in a neighborhood of $x'_0\in D_2\cap\{x_1=0\}$ in $D^+_2$,  $\varphi(x'_0)=u^-(x'_0)$  and $(x'_0,u^-(x'_0))\in O$ and thus it lies strictly below the boundary of the obstacle. We claim that 
\beq\label{decay2}
\frac{\partial_1\varphi(x'_0)}{\sqrt{1+|\nabla\varphi(x'_0)|^2}}\leq \sigma\,.
\eeq
We argue by contradiction assuming that
\beq\label{decay3}
\frac{\partial_1\varphi(x'_0)}{\sqrt{1+|\nabla\varphi(x'_0)|^2}}> \sigma\,.
\eeq
In this case we set 
\[
E_n:=n\big(E-(x'_0, u^-(x'_0))\big),\quad\! \varphi_n(x'):=n\Big[\varphi\Big(x_0'+\frac{x'}{n}\Big)-\varphi(x_0')\Big], \quad\!  O_n:=n\big(O-(x'_0, u^-(x'_0))\big)\,.
\]
Observe that $E_n$ is a $(\Lambda/n, n)$-minimizer of $\Fc_\sigma$ with obstacle $O_n$. By Theorem~\ref{th:compactness} we may assume that up to a not relabelled subsequence $E_n\to E_\infty$ in $L^1_{loc}(\R^N)$, where $E_\infty$ is a $0$-minimizer of $\Fc_\sigma$ (with obstacle $\pa H$). Moreover, from property (ii) of Theorem~\ref{th:compactness} we have that 
\beq\label{zeroin}
0\in \overline{\pa E_\infty\cap H}\,.
\eeq
Note also that $\varphi_n\to \varphi_\infty$ locally uniformly, where $\varphi_\infty(x'):=\nabla \varphi(x'_0)\cdot x'$, and that
\beq\label{decay6}
E_\infty\supset\{(x', x_N)\in H:\, x_N<\varphi_\infty(x')\}\,.
\eeq
 Set now for $\e>0$ small (to be chosen) 
$$
\psi_\e(x'):=\varphi_\infty(x')-\e x_1-\frac{\e^2}2|x'|^2+\frac\e2 x_1^2\,.
$$

A direct calculation shows that 
$$
\frac1\e\Div \biggl(\frac{\nabla \psi_\e}{\sqrt{1+|\nabla \psi_\e|^2}}\biggr)\to \frac{1+|\nabla \varphi(x'_0)|^2-| \pa_1\varphi(x'_0)|^2}{\big(1+|\nabla \varphi(x'_0)|^2\big)^{3/2}}>0\,,
$$
locally uniformly. Therefore,  we may fix $\e>0$ so small that 
\beq\label{decay4}
\Div \biggl(\frac{\nabla \psi_\e}{\sqrt{1+|\nabla \psi_\e|^2}}\biggr)>0\quad \text{in $\overline D_1^+$}
\eeq
and that, recalling  \eqref{decay3}, 
\beq\label{decay5}
\frac{\pa_1 \psi_\e}{\sqrt{1+|\nabla \psi_\e|^2}}>\sigma \quad\text{on $\pa D_1^+\cap\{x_1=0\}$.}
\eeq
Observe that we may now choose $\eta, r\in (0,1)$ such that 
\beq\label{psieta}
\psi_\e<\varphi_{\infty}\quad\text{on }\pa D_{r,\eta}\,\quad\text{and}\quad \H^{N-1}\big(\pa E_\infty \cap[(\pa D_{r,\eta}^+\cap\{x_1>0\})\times\R]\big)=0\,.
\eeq
Finally, let $w\in C_{c}^{\infty}(D_{r,\eta})$, with $w(0)>0$, and note that by \eqref{decay4} and \eqref{decay5} we may choose $\de\in (0, 1)$ so small that, 
setting $\psi_{\e, \de}:=\psi_\e+\de w$, we have
 \beq\label{decay7.1}
 \begin{split}
 & \Div \biggl(\frac{\nabla \psi_{\e, \de}}{\sqrt{1+|\nabla \psi_{\e, \de}|^2}}\biggr)>0\quad \text{in $ D_{r,\eta}^+$,}\\
 & \frac{\pa_1 \psi_{\e,\de}}{\sqrt{1+|\nabla \psi_{\e,\de}|^2}}>\sigma \quad\text{on $\pa D_{r,\eta}^+\cap\{x_1=0\}$.}
 \end{split}
 \eeq
Recall  that $H\setminus E_\infty$ is a $0$-minimizer of $\Fc_{-\sigma}$. Note that this minimality property is a consequence of the fact that $E_\infty$ is a $0$-minimizer of $\Fc_{\sigma}$, which in turn follows from the assumption that  $(x'_0,u^-(x'_0)$ does not touch the boundary of $O$ . Moreover, by \eqref{decay6} and \eqref{psieta}, we have 
$$
(H\setminus E_\infty) \cap[(\pa D_{r,\eta}^+\cap\{x_1>0\})\times\R]\subset\{(x',x_N)\in(\pa D_{r,\eta}^+\cap\{x_1>0\})\times\R:\,x_N\geq \psi_{\e,\de}(x')\}\,.
$$
Therefore, taking into account also \eqref{decay7.1}, we can apply Lemma~\ref{lm:DPCM}, with $E$, $u_0$ replaced by $H\setminus E_\infty$, $\psi_{\e,\de}$, respectively, to conclude that
$$
(H\setminus E_\infty) \cap(D_{r,\eta}^+\times\R)\subset\{(x',x_N)\in D_{r,\eta}^+\times\R:\,x_N\geq \psi_{\e,\delta}(x')\}\,.
$$
In particular, since $\psi_{\e,\de}(0)>0$ the latter inclusion contradicts \eqref{zeroin}.  This concludes the proof of \eqref{decay2}.

{\bf Step 3.} Concerning the proof of the statement for $u^+$, the case where $x'_0\in D^+_2$ is proved with the same argument used in Step~1. Instead, when $x'_0\in D_2\cap\{x_1=0\}$ we argue as In Step~2, replacing $E$ by $\widetilde E:=\{(x',-x_N):\, (x',x_N)\in E\}$ with the only difference that at the end of the proof we apply Lemma~\ref{lm:DPCM} to $\widetilde E_\infty$ instead of $H\setminus \widetilde E_\infty$. 
\end{proof}

In the following for $x'\in D^+_{2r}$ we set
$$
u^{\pm}_\sigma(x'):=u^{\pm}(x')-\frac{\sigma x_1}{\sqrt{1-\sigma^2}}\,.
$$
\begin{lemma}\label{lm:decay}
There exist two universal constants  $C_1>1$, $\mu_1\in(0,1)$ with the following property: Let $E\subset H$ be a $(\Lambda,1)$-minimizer of $\Fc_\sigma$ in $\R^N$ with obstacle $O\in\mathcal B_R$. Assume  that \eqref{guido1} holds for $a=-\eps, b=\eps$,  $\eps\in (0, 1)$, $r=1$, so that the functions $u^{\pm}$ are well defined on $D^+_2$. Assume also that  $\Lambda<\eta \eps$ with $\eta \in (0, 1)$ and 
$$
O\cap S_{-\eps, \eps}\cap\mathcal C_2=\{x\in S_{-\eps, \eps}:\, x_1=0,\, x_N<\psi(x_2, \dots, x_{N-1})\}\cap\mathcal C_2\,,
$$
where $\psi\in C^{1,1}(\partial H\cap D_2)$ with 
$$
\Div \biggl(\frac{\nabla \psi+\xi_0}{\sqrt{1+|\nabla \psi+\xi_0|^2}}\biggl)\leq \eta\eps\,,\qquad \xi_0:=\frac{\sigma}{\sqrt{1-\sigma^2}}e_1\,. 
$$
 Then the following holds:
\begin{enumerate}[\textup{(\roman*)}]
\item If  there exists \(\bar x'\in D_{1/2}^+\) such that  \(u^+_\sigma(\bar x')\ge (1-\eta)\eps\), then 
\begin{multline*}
\H^{N-1}(\{x'\in D_{1}^+: u^+_\sigma(x')\ge (1-C_1^k\eta)\eps \})\ge (1-\mu_1^k)\H^{N-1} (D^+_1)
\\
 \text{for all \(k\ge 1\) such that \(C_1^k\sqrt{\eta\eps}\le 1\)}\,.
\end{multline*}
\vskip 0.2cm
\item If   there exists \(\bar x'\in D_{1/2}^+\) such that  \(u^-_\sigma(\bar x')\le -(1-\eta)\eps\), then 
\begin{multline*}
\H^{N-1}(\{x'\in D_{1}^+: \min\{u^-_\sigma(x'),  \psi(x')\}\leq -(1-C_1^k\eta)\eps \})\ge (1-\mu_1^k)\H^{N-1} (D^+_1)
\\
 \text{for all \(k\ge 1\) such that \(C_1^k\sqrt{\eta\eps}\le 1\)}\,,
\end{multline*}
where we have set $\psi(x')=\psi(x_2,\dots,x_{N-1})$ when $x_1>0$.
\end{enumerate}
\end{lemma}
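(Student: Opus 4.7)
The plan is to reduce both parts to Savin's partial Harnack estimate (Proposition~\ref{prop:savin}) applied to the $x_1$-even reflections of suitable non-negative LSC functions on a full ball of $\R^{N-1}$. The tilted envelopes $u^\pm_\sigma$ are the right objects to reflect because, once the linear contribution prescribed by Young's law has been subtracted, their boundary condition on $\Gamma=\{x_1=0\}$ is the zero-Neumann condition \emph{to first order in the slope}, with a quadratic remainder that is tailor-made for the $\kappa$-viscosity formulation used in Proposition~\ref{prop:savin}.

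For part~(i) I set $v:=\eps-u^+_\sigma$. By Remark~\ref{rem:guido} and the flatness assumption we have $0\le v\le 2\eps$ on $\overline{D^+_2}$, $v$ is LSC (since $u^+$ is USC), and $v(\bar x')\le\eta\eps$. Using Lemma~\ref{pace} for $u^+$ and the substitution $\nabla u^+=\xi_0-\nabla v$, one gets in the viscosity sense
\[
\Div\biggl(\frac{\nabla v-\xi_0}{\sqrt{1+|\nabla v-\xi_0|^2}}\biggr)\le\Lambda<\eta\eps\quad\text{in }D^+_2,
\]
and, for any $C^2$ test function $\varphi\le v$ touching from below at $(0,\bar x')\in\Gamma$, the Young inequality for $u^+$ translates into
\[
\partial_1\varphi(0,\bar x')\le \tfrac{\sigma}{\sqrt{1-\sigma^2}}-\sigma\sqrt{1+|\nabla\varphi(0,\bar x')-\xi_0|^2}.
\]
A direct Taylor expansion around $\nabla\varphi=0$ identifies the RHS with $\sigma^2\partial_1\varphi(0,\bar x')+O(|\nabla\varphi(0,\bar x')|^2)$, so that for $|\nabla\varphi(0,\bar x')|\le\kappa$ one obtains $\partial_1\varphi(0,\bar x')\le C\kappa^2$ with $C$ universal. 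Next I extend $v$ to $B_2$ by $\tilde v(x_1,x'):=v(|x_1|,x')$; for a test function $\varphi$ touching $\tilde v$ from below at a point of $\Gamma$, I replace $\varphi$ by its even-in-$x_1$ symmetrization $\bar\varphi(x_1,x'):=\tfrac12(\varphi(x_1,x')+\varphi(-x_1,x'))$, which still touches $\tilde v$ from below, has $\partial_1\bar\varphi=0$ and $\partial_{1j}\bar\varphi=0$ for $j\ge2$ on $\Gamma$, and is therefore automatically compatible with the Neumann inequality above (with margin $C\kappa^2$). Combining this with the interior inequality for $v$ at points $(t,\bar x')$ as $t\downarrow 0$ shows that $\tilde v$ is a $\kappa$-viscosity supersolution on $B_2$ of the same operator with RHS $\eta\eps+C\kappa^2$. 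Choosing $\nu:=2\eta\eps$ and $\kappa:=C_0^k\nu$, the correction $C\kappa^2$ stays below $\eta\eps$ as long as $C_0^{2k}\eta\eps$ is small, which is guaranteed by the hypothesis $C_1^k\sqrt{\eta\eps}\le 1$ for a suitably large $C_1$ depending only on $C_0$ and $C$. Proposition~\ref{prop:savin} then yields $|\{\tilde v\le C_0^k\nu\}\cap B_1|\ge(1-\mu_0^k)|B_1|$, and the $x_1$-symmetry of $\tilde v$ transfers the same density to $D^+_1$, proving~(i) after rewriting in terms of $u^+_\sigma$.

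Part~(ii) follows the same template applied to $v:=\eps+\min\{u^-_\sigma,\psi\}$, where $\psi$ is extended to $D^+_2$ by $\psi(x_1,x'):=\psi(x_2,\dots,x_{N-1})$. Since the minimum of two viscosity supersolutions is a viscosity supersolution, and since $u^-_\sigma$ satisfies the tilted operator inequality with RHS $\Lambda\le\eta\eps$ while $\psi$ satisfies the analogous one with RHS $\eta\eps$ by assumption, the function $\min\{u^-_\sigma,\psi\}$ is a supersolution with RHS $\eta\eps$ on $D^+_2$. The boundary condition from Lemma~\ref{pace} for $u^-$ holds on $\Gamma$ exactly on $\{u^-<\psi\}$, where the minimum is realized by $u^-_\sigma$, and the Taylor argument above applies; where instead the minimum is realized by $\psi$, the vanishing of $\partial_1\psi$ is built-in, so the compatibility with reflection is trivial. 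The reflection, application of Proposition~\ref{prop:savin}, and symmetric transfer to $D^+_1$ then give the desired estimate, with the appearance of $\min\{u^-_\sigma,\psi\}$ in the conclusion encoding precisely this dichotomy.

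The main technical obstacle is the last step of the extension: verifying, at the boundary points on $\Gamma$, that the reflected function inherits the viscosity inequality. The Young boundary condition for $v$ is not exactly zero-Neumann, only zero-Neumann \emph{to first order in the slope}; the quadratic remainder $C|\nabla\varphi|^2$ is what forces the use of Savin's restricted $\kappa$-viscosity framework and produces the balance $\kappa^2\lesssim\eta\eps$, which is in turn the origin of the condition $C_1^k\sqrt{\eta\eps}\le 1$ in the statement (as opposed to the stronger $C_1^k\eta\eps\le 1$ one would expect from an exact Neumann condition, as in Savin's interior version).
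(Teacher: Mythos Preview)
Your overall strategy matches the paper's: reduce to Savin's measure estimate by reflecting a suitable shifted envelope across $\{x_1=0\}$. You also correctly identify the key obstruction (Young's law gives a Neumann inequality that is only zero to first order, with a quadratic remainder $C|\nabla\varphi|^2$) and the right scaling $\kappa^2\lesssim\eta\eps$, which is exactly the origin of the threshold $C_1^k\sqrt{\eta\eps}\le 1$. The gap is in how you handle test functions touching on $\Gamma$.

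Symmetrizing the test function to get $\partial_1\bar\varphi=0$ on $\Gamma$ only shows compatibility with the \emph{first-order} Neumann inequality; it says nothing about the \emph{second-order} operator inequality you need for the supersolution property of $\tilde v$ at points of $\Gamma$. Your sentence ``combining this with the interior inequality for $v$ at points $(t,\bar x')$ as $t\downarrow 0$'' is not a valid step: points $(t,\bar x')$ with $t>0$ are not touching points in general, so the interior viscosity inequality is unavailable there, and there is no mechanism forcing the touching to propagate into the interior. Without ruling out contact on $\Gamma$ (or producing a genuine second-order inequality there), the reflected function is not known to be a $\kappa$-viscosity supersolution on the full disk, and Proposition~\ref{prop:savin} does not apply.

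The paper closes this gap by subtracting a linear term \emph{before} reflecting: it works with the even extension of $\min\{u^-_\sigma,\psi\}-L_k x_1$ (and of $-u^+_\sigma-L_kx_1$ for part~(i)) with
\[
L_k=\Bigl(\tfrac{\sigma^+}{2\sqrt{1-\sigma^2}}+1\Bigr)(C_1^k\eta\eps)^2,
\]
i.e.\ $L_k$ strictly larger than your quadratic remainder $C\kappa^2$ at level $\kappa=C_1^k\eta\eps$. This makes the effective Neumann condition \emph{strictly negative}: if a test function $\varphi$ with $|\nabla\varphi|\le\kappa$ touched the reflected function from below at $x_0'\in\Gamma$, the Young inequality from the $D^+_2$ side would force $\partial_1\varphi(x_0')<0$, while the same computation from the $D^-_2$ side forces $\partial_1\varphi(x_0')>0$ --- a contradiction. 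Hence every admissible touching point lies in $D^+_2\cup D^-_2$, where the interior inequality from Step~1 applies directly. A secondary point: after reflection the drift $\xi_0$ changes sign on $D^-_2$, so the reflected function is not a supersolution of literally ``the same operator''; this is harmless because the constants in Proposition~\ref{prop:savin} depend only on the bound $M\ge|\xi|$, but your write-up should not claim otherwise.
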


\begin{proof} 
We start by proving (ii). To this aim we split the proof in three steps.
 \par\noindent
{\bf Step 1.} Set $w^-(x')=\min\{u^-(x'),\psi(x')+\xi_0\cdot x'\}=\min\big\{u^-(x'),\psi(x')+\frac{\sigma x_1}{\sqrt{1-\sigma^2}}\big\}$.
We claim  that $w^-$ satisfies
$$
\Div \biggl(\frac{\nabla w^-}{\sqrt{1+|\nabla w^-|^2}}\biggr)\leq {\eta\eps}\qquad\text{ in }D^+_2
$$
in the viscosity sense.
To this aim assume that $x'_0\in D^+_2$ is a  strict minimum point for $w^- -\varphi$ in $D^+_2$, with $\varphi\in C^2(D^+_2)$  and $w^-(x'_0)=\varphi(x'_0)$. If $w^-(x'_0)=\psi(x'_0)+\xi_0\cdot x'_0$, then clearly we also have that $x'_0$ is a minimum point of $x'\to \psi(x')+\xi_0\cdot x'-\varphi(x')$ and thus
$$
\Div \biggl(\frac{\nabla \varphi}{\sqrt{1+|\nabla \varphi |^2}}\biggr)(x'_0)\leq 
\Div \biggl(\frac{\nabla \psi+\xi_0}{\sqrt{1+|\nabla \psi+\xi_0|^2}}\biggr)(x'_0)\leq{\eta\eps}\,.
$$
If otherwise $w^-(x'_0)=u^-(x'_0)$ the claim follows from Lemma~\ref{pace} since $\Lambda<\eta\eps$.

{\bf Step 2.} 
 We now denote by $w_\sigma^-$ the function defined on $D_2$ obtained by even reflection of 
 $$
\min\{u^-_\sigma, \psi\}-L_kx_1,\qquad L_k:=\Big(\frac{\sigma^+}{2\sqrt{1-\sigma^2}}+1\Big)C_1^{2k}\eta^2\eps^2\,,
$$
 with  respect to $\partial H$, where $C_1>1$ will be chosen later and $k$ is an integer such that $C_1^k\sqrt{\eta\eps}\leq1$. We claim that $w_\sigma^-$ satisfies the inequality
 \begin{equation}\label{decay7}
\Div \biggl(\frac{\nabla w^-_\sigma+\xi_0+L_ke_1}{\sqrt{1+|\nabla w^-_\sigma+\xi_0+L_ke_1|^2}}\biggr)\leq \eta\eps\quad\text{ in }D_2
\end{equation}
in the   $(C_1^k\eta\eps)$-viscosity sense, see Definition~\ref{viscorestr}.
To this aim let $\varphi\in C^2(D_2)$ such that $w_\sigma^- -\varphi$ has a minimum at $x'_0\in D_2$ and $|\nabla \varphi(x'_0)|\leq C_1^k\eta\eps$.
We first show that  $x'_0\not \in \partial H$. Indeed, assume by contradiction the opposite and assume in addition that 
 $u^-_\sigma(x'_0)<\psi(x'_0)$, that is $u^-(x'_0)<\psi(x'_0)$. Then, by Lemma~\ref{pace}, using as a test function $\varphi+\Big(\frac{\sigma }{\sqrt{1-\sigma^2}}+L_k\Big)x_1$, we infer that
 $$
 \frac{\pa_1\varphi(x'_0)+A_k}{\sqrt{1+(\pa_1\varphi(x'_0)+A_k)^2+|\nabla'\varphi(x'_0)|^2}}\leq\sigma\,,
 $$
 where we set $A_k:=\frac{\sigma }{\sqrt{1-\sigma^2}}+L_k$ and $\nabla'\varphi=\nabla\varphi-(\pa_1\varphi)e_1$. From this inequality, using that $|\nabla'\varphi|\leq C_1^k\eta\eps$, we easily get
 $$
 \pa_1\varphi(x'_0)+A_k\leq\frac{\sigma }{\sqrt{1-\sigma^2}}+\frac{\sigma^+ }{\sqrt{1-\sigma^2}}\frac{C_1^{2k}\eta^2\eps^2}{2}\,.
 $$
 In turn, the last inequality implies that $\pa_1\varphi(x'_0)<0$.
 
 On the other hand the symmetric argument in $D_2^-$ shows that 
 $\partial_1\varphi (x'_0)>0$, thus leading to a contradiction. 
 If instead $u^-(x'_0)=\psi(x'_0)$, i.e., $u^-_\sigma(x'_0)=\psi(x'_0)$ then $x'_0$ is a minimum for $\psi- L_kx_1
-\varphi$ in $\overline D^+_2$ and thus, in particular, 
 $$
 \partial_1\varphi (x'_0)\leq\partial_1\psi(x'_0)-L_k=-L_k<0.
 $$
 Arguing symmetrically in $D^-_2$ we also get $\partial_1\varphi (x'_0)>0$, which is again a contradiction.

Thus,  $x'_0\in D^+_2\cup D^-_2$ and the fact that $w_\sigma^-$ satisfies \eqref{decay7} in the viscosity sense now follows easily from Step 1, since on $D^+_2$ we have $w^-_\sigma(x')=w^-(x')-\big(\frac{\sigma}{\sqrt{1-\sigma^2}}+L_k\big)x_1$.

\noindent
{\bf Step 3.} 
 Observe that from our assumptions  we have that  $-\eps\leq\min\{u^-_\sigma,  \psi\}\leq\eps$. Thus, 
$0<w_\sigma^- +\eps+2L_k$ in $D_2$. Moreover, by assumption we have
$$
w_\sigma^- (\bar x')+\eps+2L_k\leq\Big(1+\Big(\frac{\sigma^+}{\sqrt{1-\sigma^2}}+2\Big)C_1^{2k}\eta\eps\Big)\eta\eps\leq\Big(3+\frac{\sigma^+}{\sqrt{1-\sigma^2}}\Big)\eta\eps:=\nu\,.
$$ 
Therefore from Step 2 and by Proposition~\ref{prop:savin}, applied with $d=N-1$, we have that
$$
\H^{N-1}(\{w_\sigma^- +\eps+2L_k\leq C_0^k\nu\}\cap D_1)\ge (1-\mu_0^k) \H^{N-1}(D_1)\,,
$$
provided that  $C_0^k\nu\leq C_1^k\eta\eps\leq1$, where $\mu_0$ and $C_0$ are the constants provided by Proposition~\ref{prop:savin} corresponding to $M=\frac{|\sigma|}{\sqrt{1-\sigma^2}}+\frac{\sigma^+}{2\sqrt{1-\sigma^2}}+1$. Note that the inequality 
$C_0^k\nu\leq C_1^k\eta\eps$ is satisfied if we take $C_1\geq C_0(3+\sigma^+(1-\sigma^2)^{-1/2})$. Thus we finally have
$$
\H^{N-1}(\{ \min\{u^-_\sigma,  \psi\}\leq -\eps+L_kx_1-2L_k+C_0^k\nu \}\cap D_1^+)\ge (1-\mu_0^k)\H^{N-1} (D^+_1)\,,
$$
from which the conclusion (ii)  follows since $L_kx_1-2L_k+C_0^k\nu\leq C_1^k\eta\eps$ in $D^+_1$.

\medskip

Concerning the proof of (i) we argue as in the previous steps with $w^-$ replaced by $-u^+$ and $w^-_\sigma$ replaced by the even reflection of $-u^+_\sigma-L_k x_1$.
\end{proof}

\begin{remark}\label{rm:stupida}
Note that if $\partial_{\partial H}O\cap \mathcal C_2\cap S_{-\eps,\eps}=\emptyset$  the conclusion of   Lemma~\ref{lm:decay} holds with $\min\{u^-_\sigma,\psi\}$ replaced by $u^-_\sigma$.
\end{remark}

\begin{remark}\label{rm:decay}
Observe that the following interior version of the previous lemma holds:
Let $\kappa$ be a positive number. There exist a  constant  $C_1>1$ and $\mu_1\in(0,1)$ depending only on $\kappa$ with the following property: if $E\subset H$ is a $(\Lambda,1)$-minimizer of the perimeter in $\R^N$ such that
$$
\partial E\cap \mathcal C_2(y')\subset S_{-\eps, \eps}^\alpha\,,
\subset E\,,
$$
for some $y'=(y_1,\dots,y_{N-1})$ with $y_1\geq2$, 
 $\alpha\in[-\kappa,\kappa]$,  $\eps\in (0, 1)$, $\Lambda<\eta \eps$ with $\eta \in (0, 1)$,  then the following holds:
\begin{enumerate}[\textup{(\roman*)}]
\item If  there exists \(\bar x'\in D_{1/2}(y')\) such that  \(u^+(\bar x')-\alpha\bar x_1\ge (1-\eta)\eps\), then 
\begin{multline*}
\H^{N-1}(\{x'\in D_{1}(y'): u^+(x')-\alpha x_1\ge (1-C_1^k\eta)\eps \})\ge (1-\mu_1^k)\H^{N-1} (D^+_1)
\\
 \text{for all \(k\ge 1\) such that \(C_1^k\sqrt{\eta\eps}\le 1\)}.
\end{multline*}
\vskip 0.2cm
\item If   there exists \(\bar x'\in D_{1/2}(y')\) such that  \(u^-(\bar x')-\alpha\bar x_1\le -(1-\eta)\eps\), then 
\begin{multline*}
\H^{N-1}(\{x'\in D_{1}(y'): u^-(x')-\alpha x_1\leq -(1-C_1^k\eta)\eps \})\ge (1-\mu_1^k)\H^{N-1} (D^+_1)
\\
 \text{for all \(k\ge 1\) such that \(C_1^k\sqrt{\eta\eps}\le 1\)}\,.
\end{multline*}
\end{enumerate}
This clearly follows from the interior version of the previous arguments. We only remark that the uniformity of the estimates with respect to $\alpha$ varying in a bounded interval relies on the estimates provided by Proposition~\ref{prop:savin} which are uniform with respect to $\xi$ varying in a bounded set.
\end{remark}

We need also the following lemma which relies on the classical interior regularity theory for $(\Lambda,r_0)$-minimizers of the perimeter.

\begin{lemma}\label{covid}
For every $\delta\in(0,1)$, $\kappa>0$, there exists $\eps>0$ such that if  $E$ is a $(\Lambda,1)$-minimizer of  the perimeter in $\mathcal \mathcal C^+_2$ with $\Lambda\leq1$ such that 
$$
 \partial E\cap \mathcal C_2^+\subset S_{-\eps,\eps}^\alpha,
$$
with $\alpha\in[-\kappa,\kappa]$, 
then the corresponding functions $u^+$ and $u^-$ coincide in $D^+_1$ outside a set of measure less than $\delta$.
\end{lemma}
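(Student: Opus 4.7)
The plan is to reduce the statement to the classical interior $\eps$-regularity (``flatness implies regularity'') theorem for $(\Lambda,r_0)$-minimizers of the perimeter, applied in balls that keep a definite distance $\rho>0$ from $\partial H$. The driving observation is that $\{x'\in D^+_1:u^+(x')\neq u^-(x')\}$ can only be ``large'' inside the thin sliver $\{x'\in D^+_1:x_1<\rho\}$, whose $(N-1)$-dimensional measure tends to zero as $\rho\to 0^+$. Accordingly, given $\delta\in(0,1)$ and $\kappa>0$, I first choose $\rho=\rho(\delta,N)\in(0,1)$ with $|\{x'\in D^+_1:x_1<\rho\}|<\delta$, and then show that for $\eps=\eps(\delta,\kappa,N)$ small enough, every $E$ as in the statement satisfies $u^+=u^-$ on $\{x'\in D^+_1:x_1\geq\rho\}$; together with the choice of $\rho$, this yields the lemma.

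To prove the latter, fix $x'_0$ with $x_1^0\geq\rho$, set $y_0:=(x'_0,\alpha x_1^0)$, and note that for $\eps<\rho/4$ the ball $B_{\rho/4}(y_0)$ is contained in $\mathcal C^+_2$, so the $(\Lambda,1)$-minimality of $E$ is available in it. Moreover $\partial E\cap B_{\rho/4}(y_0)$ lies in a slab of thickness $2\eps/\sqrt{1+\alpha^2}$ orthogonal to the unit vector $\nu_\alpha:=(-\alpha,0,\dots,0,1)/\sqrt{1+\alpha^2}$, and the standard volume density estimate for $(\Lambda,r_0)$-minimizers of the perimeter (the interior analogue of Proposition~\ref{p:densityestimates}) forces one of the two connected components of $B_{\rho/4}(y_0)\setminus\overline{S^\alpha_{-\eps,\eps}}$ to be contained in $E^{(1)}$ up to a negligible set and the other in $(\R^N\setminus E)^{(1)}$. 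After rotating coordinates so that $\nu_\alpha\mapsto e_N$, the classical flatness-implies-regularity theorem for $(\Lambda,r_0)$-minimizers of the perimeter (see e.g.~\cite[Theorem 26.3]{Maggi12}) furnishes constants $\bar\eps_1>0$ and $\gamma\in(0,1)$, depending only on $N$, such that whenever $\eps/(\rho/4)<\bar\eps_1$ and $\Lambda\cdot(\rho/4)<\bar\eps_1$ (the latter being ensured by imposing $\rho<4\bar\eps_1$, since $\Lambda\leq1$), the set $\partial E\cap B_{\gamma\rho/4}(y_0)$ is a $C^{1,1/2}$ graph in direction $\nu_\alpha$. Consequently $u^+=u^-$ on the $D$-ball of radius $\gamma\rho/4$ around $x'_0$, and a finite-covering argument over the compact set $\{x_1\geq\rho\}\cap\overline{D_1}$ finishes the proof.

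The principal technical point I anticipate is the uniformity of the $\eps$-regularity thresholds in the tilt $\alpha\in[-\kappa,\kappa]$; this is, however, essentially free since the interior regularity statement is invariant under rigid rotations and $[-\kappa,\kappa]$ is compact. A secondary point is establishing the ``one-sided'' inclusion (one cap in $E$, the other in $E^c$) required by the standard form of the flatness-implies-regularity theorem, which follows at once from the perimeter and volume density estimates together with the connectedness of the two caps of $B_{\rho/4}(y_0)\setminus\overline{S^\alpha_{-\eps,\eps}}$.
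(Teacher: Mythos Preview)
Your argument is correct and follows essentially the same strategy as the paper: confine the possible failure of $u^+=u^-$ to the thin strip $\{x_1<\rho\}$ of small measure, and invoke classical interior regularity for $(\Lambda,r_0)$-minimizers of the perimeter on the remaining region $\{x_1\ge\rho\}$. The only stylistic difference is that the paper phrases the interior step as a compactness/$C^1$-convergence argument (take a sequence with $\eps_n\to0$ and observe $\partial E_n\cap\{x_1>\delta'\}$ converges in $C^1$ to the plane $\{x_N=\alpha x_1\}$), whereas you apply the quantitative $\eps$-regularity theorem directly in each ball $B_{\rho/4}(y_0)$; both routes rest on the same underlying regularity theory, and your version has the minor advantage of giving an explicit dependence of $\eps$ on $\delta$ and $\kappa$. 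Two cosmetic points: the center of the ball in which you invoke the theorem should be a point of $\partial E$ (take e.g.\ $(x'_0,u^-(x'_0))$ rather than $(x'_0,\alpha x_1^0)$), and the passage from ``graph in direction $\nu_\alpha$ with small gradient'' to $u^+=u^-$ deserves one line (the normal stays transversal to $e_N$ since $|\alpha|\le\kappa$).
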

\begin{proof}
It is enough to observe that given $\delta'\in(0,1)$, a sequence $\eps_n>0$ converging to zero and a sequence $E_n$ of $(\Lambda_n,r_n)$-minimizers, with $\Lambda_n+\frac{1}{r_n}\leq2$ and  $\partial E_n\cap \mathcal C_2^+\subset S_{-\eps_n,\eps_n}^{\alpha_n}$, with $\alpha_n\in[-\kappa,\kappa]$, $\alpha_n\to\alpha$,  then from classical regularity results we have that $\partial E_n\cap \mathcal \mathcal C^+_1\cap\{x_1>\delta'\} $ converge in $C^1$ to the plane $\{x_N-\alpha x_1=0\}\cap \mathcal \mathcal C^+_1\cap\{x_1>\delta'\}$.
\end{proof}

\begin{lemma}\label{apparte}
For every $\delta>0$ there exists $\eta=\eta(\delta)\in(0,1/2)$ with the following property: Let $O\in\mathcal B_R$ be such that 
\begin{equation}\label{apparte1}
\partial H\cap\{x_N=-\eps r\}\cap \mathcal C_{2r}\subset {\overline O} \,,
\end{equation}
for some $\eps\in(0,1]$, and  for some $r>0$ with $\frac{r}{R}\leq\eta(\delta)\eps$. Assume also that 
 there exists a point $x'\in \partial_{\partial H}O\cap \mathcal C_r\cap\{-\eps r\leq x_N\leq\eps r\}$. Then the connected component of $\partial_{\partial H}O\cap \mathcal C_{2r}$ containing $x'$ is the graph of a  function $\psi\in C^{1,1}(\partial H\cap  D_{2r})$ such that
$$
\|D\psi\|_{L^\infty(\partial H\cap D_{2r})}\leq\eps,\quad r\|D^2\psi\|_{L^\infty(\partial H\cap D_{2r})}\leq\delta\eps\,.
$$
\end{lemma}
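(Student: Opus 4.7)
The plan is to use the inner/outer ball condition from $O\in\mathcal B_R$ (Remark~\ref{rem:BR}) together with the confinement $\partial H\cap\{x_N=-\eps r\}\cap\mathcal C_{2r}\subset\overline O$ to show that, near $x'$, the relative boundary $\partial_{\partial H}O$ is almost the horizontal plane through $x'$. Rescaling by $1/r$ reduces to the case $r=1$ and $R\geq 1/(\eta\eps)$. Remark~\ref{rem:BR} makes $\partial_{\partial H}O$ a $C^{1,1}$ hypersurface of $\partial H$ with principal curvatures bounded by $1/R\leq\eta\eps$, so the inner unit normal $\nu$ is $1/R$-Lipschitz and $|(z-y)\cdot\nu(y)|\le|z-y|^2/(2R)$ for $y,z$ close on $\partial_{\partial H}O$.

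The main quantitative step is to pin down the inner normal $\nu'$ at $x'$. Since the outer open ball $B''=B_R(x'-R\nu')$ is disjoint from $\overline O$, every $p=(0,p_\parallel,-\eps)$ with $|p_\parallel|<2$ satisfies $|p-(x'-R\nu')|\geq R$, i.e.
\[
(p_\parallel-x'_\parallel)\cdot\nu'_\parallel+(-\eps-x'_N)\nu'_N\geq-\frac{|p_\parallel-x'_\parallel|^2+(\eps+x'_N)^2}{2R}.
\]
The choice $p_\parallel=x'_\parallel$ gives $\nu'_N\leq\eta\eps^2$, in particular $\nu'_N\leq 0$; a second test with $p_\parallel$ on $\partial D_2\cap\partial H$ chosen opposite to $\nu'_\parallel$, combined with $|x'_\parallel|\leq 1$ and $R\geq 1/(\eta\eps)$, yields $|\nu'_\parallel|\leq C\sqrt\eta\,\eps$, hence $\nu'_N\leq-1+C\eta\eps^2$ by $|\nu'|=1$. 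I expect this step to be the main obstacle: the naive optimum for $p_\parallel$ falls outside $D_2$, so a separate boundary analysis is needed (or, equivalently, a contradiction-compactness argument rescaling a violating sequence $(O_n,\eps_n,r_n,R_n)$ and using the Kuratowski/$C^{1,\alpha}_{loc}$ convergence $\mathcal B_{R/r}\to\mathcal B_\infty$ from Remark~\ref{rem:BR} to force the limit obstacle to be a horizontal half-space $\{x_N<c\}$ with $|c|\leq\lim\eps_n$, contradicting the failure of the conclusion).

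Next I would propagate $\nu'$ along the connected component $\Gamma$ of $\partial_{\partial H}O\cap\mathcal C_2$ through $x'$. The $1/R$-Lipschitz property of $\nu$ together with the fact that any two points of $\Gamma$ are at extrinsic (and hence comparable intrinsic) distance $\leq 4$ gives $|\nu(y)-\nu'|\leq C\eta\eps$ for every $y\in\Gamma$; combined with the previous step, $|\nu(y)_\parallel|\leq\eps$ and $|\nu(y)_N|\geq 1/2$ on $\Gamma$ for $\eta=\eta(\delta)$ small enough. The uniform transversality then gives a local graph representation $x_N=\psi(x_2,\dots,x_{N-1})$ via the implicit function theorem, and connectedness of $\Gamma$ together with the monotonicity in $x_N$ forced by $\nu_N<0$ extends this to a single $\psi\in C^{1,1}(\partial H\cap D_2)$ whose graph coincides with $\Gamma$.

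From $\nabla\psi=-\nu_\parallel/\nu_N$ and the bounds on $\nu_\parallel,\nu_N$ I get $\|D\psi\|_{L^\infty}\leq\eps$; the curvature bound on $\partial_{\partial H}O$ and the smallness of $\nabla\psi$ yield $\|D^2\psi\|_{L^\infty}\leq C/R\leq C\eta\eps\leq\delta\eps$ after choosing $\eta\leq\delta/C$. Undoing the rescaling produces $\|D\psi\|_{L^\infty(\partial H\cap D_{2r})}\leq\eps$ and $r\|D^2\psi\|_{L^\infty(\partial H\cap D_{2r})}\leq\delta\eps$ as stated.
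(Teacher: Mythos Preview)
Your direct approach via the outer-ball inequality is natural, but the bound you claim, $|\nu'_\parallel|\le C\sqrt{\eta}\,\eps$, does not follow. Testing with $p_\parallel$ chosen opposite to $\nu'_\parallel$ at distance $s\in(0,1]$ from $x'_\parallel$ gives
\[
s|\nu'_\parallel|\le \frac{s^2+(\eps+x'_N)^2}{2R}-(\eps+x'_N)\nu'_N,
\]
and since at this stage you only know $\nu'_N\le\eta\eps^2$ (no useful lower bound), the term $-(\eps+x'_N)\nu'_N$ can be as large as $(\eps+x'_N)|\nu'_N|\le 2\eps$. Taking $s=1$ yields only $|\nu'_\parallel|\le 2\eps+O(\eta\eps)$, hence $\|D\psi\|\le(2+O(\eta))\eps$. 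This is essentially sharp for the direct method: a $C^{1,1}$ graph with $\psi(x'_\parallel)=\eps$, $|x'_\parallel|=1$, curvature $\approx 0$, and $\psi\ge-\eps$ on $D_2$ can have slope close to $2\eps$ at $x'_\parallel$. So the direct argument proves the lemma only up to a constant in the gradient estimate; you correctly flagged this step as the main obstacle, and indeed it cannot be removed by this route.

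The paper's proof is exactly the compactness alternative you mention in parentheses, but with one key ingredient you are missing: an \emph{anisotropic} rescaling $\Phi_h(x)=(x_1,\dots,x_{N-1},\eps_h^{-1}x_N)$ applied to a contradicting sequence $(O_h,\eps_h)$ with $\tfrac{1}{R_h}\le\tfrac{\eps_h}{h}$. One checks that $\widetilde O_h:=\Phi_h(O_h)\in\mathcal B_{\eps_hR_h}$ (the images of the tangent $R_h$-balls are ellipsoids whose minimal radius of curvature is $\eps_hR_h$), so as $\eps_hR_h\to\infty$ the boundary component through $\Phi_h(x'_h)$ Hausdorff-converges to a hyperplane $\pi$ through some $\bar x'$ with $\bar x'_N\in[-1,1]$ and with $\pi\cap\mathcal C_2\subset\{x_N\ge-1\}$; the rescaled graphs $\psi_h/\eps_h$ then converge in $C^{1,1}$ to an affine function, contradicting the assumed failure of either estimate. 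Your isotropic version (just sending $R\to\infty$ via Remark~\ref{rem:BR}) runs into trouble because $\eps_h$ need not tend to $0$ and the limit half-space need not be horizontal; normalizing $\eps$ to $1$ by stretching in $x_N$ is precisely what makes the sharp constant $\|D\psi\|\le\eps$ come out.
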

\begin{proof} By rescaling it is enough to prove the statement with $r=1$.

Observe that, under our assumptions, if $\eta(\delta)$ is sufficiently small, hence $R$ is large, then the connected component of $\partial_{\partial H}O\cap \mathcal C_{2}$ containing $x'$ is the graph of a $C^{1,1}(\partial H\cap  D_{2})$ function.
We argue by contradiction assuming that there exist a sequence $\eps_h\in(0,1]$, a sequence $O_h\in\mathcal B_{R_h}$, with  $\frac{1}{R_h}\leq\frac{\eps_h}{h}$ such that  $\partial H\cap\{x_N=-\eps_h\}\cap \mathcal C_2\subset \overline O_h$ for all $h$ and that there exist $x'_h\in \partial_{\partial H}O_h\cap \mathcal C_1\cap\{-\eps_h\leq x_N\leq\eps_h\}$ such that
\begin{equation}\label{apparte2}
\|D\psi_h\|_{L^\infty(\partial H\cap D_{2})}>\eps_h\quad\text{or}\quad\|D^2\psi_h\|_{L^\infty(\partial H\cap D_{2})}>\eps_h\delta\,,
\eeq
where the graph of $\psi_h\in C^{1,1}(\partial H\cap  D_{2})$ describes the connected component of $\partial_{\partial H}O_h\cap \mathcal C_2$.

We now set $\widetilde O_h=\Phi_h(O_h)$ where $\Phi_h(x)=(x_1,\dots,x_{N-1},\eps_h^{-1}x_N)$. Since $\Phi_h$ maps balls of radius $1$ into ellipsoids with maximal semiaxis of length $\frac{1}{\eps_h}$ it is easy to check that  $\widetilde O_h\in\mathcal B_{\eps_hR_h}$. Without loss of generality we may assume that $\Phi_h(x'_h)$ converges to a point $\bar x'\in\partial H\cap\overline{\mathcal C}_1$ with $-1\leq\bar x_N\leq1$.
Then, recalling that $\eps_hR_h\to+\infty$ and thus the uniform inner and outer ball condition for $\widetilde O_h$ hold for larger and larger radii, by a compactness argument we have that the connected components of $\partial_{\partial H} \widetilde O_h$ containing $x'_h$ converge locally (up to a subsequence) in the Hausdorff sense   to a $(N-2)$-dimensional plane $\pi$  passing through $\bar x'$ and such that $\pi\cap\mathcal C_2\subset\{x_N\geq-1\}$. Note that the latter inclusion follows from \eqref{apparte1} applied to $\widetilde O_h$ and $\eps=1$. Note also that this inclusion together with the fact that $\bar x'_N\leq1$ yields that the slope of $\pi$ is strictly less than $1$. Hence the functions $\frac{\psi_h}{\eps_h}$ converge locally uniformly (actually in $C^{1,1}$) to an affine function whose gradient has norm strictly less than 1.  This contradicts \eqref{apparte2} for $h$ sufficiently large.
\end{proof}

\begin{proof}[Proof of \cref{lm:savin}]
By a simple rescaling argument it is enough to prove the statement for $r=1$.
Up to renaming the coordinates  we can assume that \(a+b=0\), so that  if we set \(\eps=(b-a)/2\) our assumption becomes 
\[
\partial E\cap \mathcal C^+_{2}\subset S_{-\eps, \eps}.
\]
Let \(0< \eta_0<1\) to be fixed in an universal way. If \(\sup_{D_{1/2}^+} u^+_\sigma\le \eps-\eta_0 \eps\) (resp.  if \(\inf_{D_{1/2}^+} u^-_\sigma\ge -\eps+\eta_0 \eps\))   we are done by choosing  \(a'=a=-\eps\) and \(b'=\eps-\eta_0 \eps=b-\eta_0(b-a)/2\) (resp.  \(b'=b=\eps\) and \(a'=-\eps+\eta_0 \eps=a+\eta_0(b-a)/2\)). 

Hence we can assume by contradiction that there are \(\bar x', \hat x' \in D_{1/2}^+\) such that 
\begin{equation}\label{e:trieste}
u^+_\sigma(\bar x') > \eps-\eta_0 \eps\qquad\text{and}\qquad u_\sigma^-(\hat x')< -\eps+\eta_0 \eps.
\end{equation}
Assume that \eqref{117} holds. Then there exists   $\bar y'\in\partial_{\partial H}O\cap \mathcal C_1\cap\{-\eps<y_N<\eps\}$.  Then, we may apply Lemma~\ref{apparte} to conclude that the connected component of $\pa_{\pa H}O\cap \mathcal C_2$ containing $\bar y'$ is the graph of a function $\psi\in C^{1,1}(\partial H\cap D_2)$, where $\|D^2\psi\|_{L^\infty(\partial H\cap D_2)}\le\delta\eps$ with $\delta=\delta(\sigma)$ so small that
$$
\Div \biggl(\frac{\nabla \psi+\xi_0}{\sqrt{1+|\nabla \psi+\xi_0|^2}}\biggl)\leq \eta_0\eps\,.
$$
This is certainly true provided that $O\in\mathcal B_R$ with $\frac{1}{R}\leq\eta(\delta(\sigma))\eps$. Thus we can apply Lemma~\ref{lm:decay} with $k_0$ large to infer that
\[
 \H^{N-1}(\{x'\in D^+_1: u^+_\sigma(x') \ge (1-C_1^{k_0}\eta_0)\eps \})\ge (1-\mu_1^{k_0})\H^{N-1} (D^+_1)\ge \frac{3}{4}\H^{N-1} (D^+_1)
\]
and
 \begin{multline*}
\H^{N-1}(\{x'\in D_1^+: \min\{u_\sigma^-,\psi\}\le -(1-C_1^{k_0}\eta_0)\eps \})\\
\ge (1-\mu_1^{k_0})\H^{N-1} (D^+_1)\ge \frac{3}{4}\H^{N-1} (D^+_1),
\end{multline*}
provided that $C_1^{k_0}\sqrt{\eta_0}\leq1$.
If \(\psi\ge  -(1-C^{k_0}_1\eta_0)\eps\) in $D_1\cap\partial H$ we are done since from Lemma~\ref{covid} we may assume that $\H^{N-1}(\{u^+\not=u^-\})<\H^{N-1} (D^+_1)/4$ and therefore from the two previous inequalities we get
\[
 \H^{N-1}(\{x'\in D^+_1: u^+_\sigma(x') \ge (1-C_1^{k_0}\eta_0)\eps \}\cap\{u^+=u^-\})>  \frac{1}{2}\H^{N-1} (D^+_1)
\]
and
\[
\H^{N-1}(\{x'\in D_1^+: \min\{u_\sigma^-,\psi\}\le -(1-C_1^{k_0}\eta_0)\eps \}\cap\{u^+=u^-\})>  \frac{1}{2}\H^{N-1} (D^+_1)
\]
which is impossible.

Hence we only have to deal with the case where there exists a point in $D_1\cap\pa H$  at which $\psi\le  -(1-C^{k_0}_1\eta_0)\eps\le -3\eps/4\) (provided \(\eta_0\) is small). Thus, up to replacing $O$ with $O+\frac{3\eps}{4}e_N$, we may apply Lemma~\ref{apparte} in $\mathcal C_2$ with $\eps$ replaced by $\eps/4$ (and taking $\eta(\delta)$ smaller if needed)  to conclude that $\|\nabla\psi\|_{L^\infty(D_2)}\leq\frac{\eps}{4}$. In turn this estimate implies that $\psi\leq0$ in $D_2\cap \partial H$. 

Let $\Omega\subset \R^{N-1}$ be a smooth set such that $D^+_{4/3}\subset \Omega\subset D^+_{3/2}$ and consider the solution to  the following problem: 
\[
\begin{cases}
\Delta w_\mu(1+|\xi_0|^2)-D_{11}w_\mu|\xi_0|^2=-\mu\qquad &\text{in \(\Omega\),}
\\
w_\mu=\varphi\qquad &\text{on \(\partial \Omega\),}
\end{cases}
\]
where  $\varphi$ is a smooth function such that $\varphi\equiv 1$ on $\partial \Omega\cap H$ and $\varphi\equiv 1/4$ on $D_1\cap \partial H$, and $\varphi\geq 1/4$ elsewhere on $\partial \Omega$ and $\mu>0$ is to be chosen. Note that $w_\mu$ is smooth on $\overline \Omega$ and that it converges in $C^2(\overline \Omega)$, as $\mu\to0$,  to the function $w_0$ such that $w_0=\varphi$ on $\partial\Omega$ and
$$
\Delta w_0(1+|\xi_0|^2)-D_{11}w_0|\xi_0|^2=0\qquad \text{in \(\Omega\),}
$$
 By the maximum principle there exists $\tau\in (0,1/2)$ such that $\max_{\overline D_1^+} w_0\leq 1-2\tau$. Therefore there exists $\mu_0>0$ such that
\begin{equation}\label{eq:tau}
\max_{\overline D_1^+} w_{\mu_0}\leq 1-\tau\,.
\end{equation}
We now claim that for $\eps\leq \eps_0$, with $\eps_0$ depending only on $w_0$, the function $v_\eps:=\eps w_{\mu_0}$ satisfies
\begin{multline*}
\Div \biggl(\frac{\nabla  v_\eps+\xi_0}{\sqrt{1+|\nabla  v_\eps+\xi_0|^2}}\biggr)\\
=\frac{\eps}{(1+|\eps\nabla w_{\mu_0}+\xi_0|^2)^{\frac32}}[\Delta w_{\mu_0} (1+|\eps\nabla w_{\mu_0}+\xi_0|^2)
-D^2w_{\mu_0}(\eps\nabla w_{\mu_0}+\xi_0)(\eps\nabla w_{\mu_0}+\xi_0)]\\
\leq \eps\Big(-\mu_0+C\eps(1+\|w_0\|_{C^2(\overline \Omega)}^3)\Big)<-\frac{\eps\mu_0}{2}< -2\eta_0\eps\,,
\end{multline*}
with $C>0$ universal, provided  $\eps_0$ and $\eta_0$ are chosen  small enough.

By  our assumptions, \(u^+_\sigma\le v_\eps\) on \(\partial \Omega\). Thus, recalling the first inequality in \eqref{irene-1} and the assumption on $\Lambda$, we get that 
$$
\Div \biggl(\frac{\nabla  u^+_\sigma+\xi_0}{\sqrt{1+|\nabla  u^+_\sigma+\xi_0|^2}}\biggr)\geq- 2\eta_0\eps
$$
in the viscosity sense. By the comparison principle we conclude that 
 \(u^+_\sigma\le v_\eps\) in \(\Omega\).  In turn, recalling \eqref{eq:tau}, we infer
 \[
\sup_{D_1^+} u^+_\sigma\le \sup_{D_1^+} v_\eps\le (1-\tau) \eps\,. 
\]
 This is in contradiction with \eqref{e:trieste} if \(\eta_0<\tau\).

Finally, if \eqref{118} holds, we may argue as before taking $\psi\equiv\eps$.
\end{proof}

\begin{lemma}\label{lm:savinglobal}
There exist \(\eps_1\), \(\eta_1\in (0,1/2)\) universal with the following property: if $E\subset H$ is a $(\Lambda,1)$-minimizer of $\Fc_\sigma$ in $\R^N$ with obstacle $O\in\mathcal B_R$,
 such that 
 \begin{equation}\label{savinglobal1}
 \partial E\cap \mathcal C_{2r}^+(x')\subset S_{a,b}\qquad \text{with }b-a\leq \eps_1r\,,
 \end{equation}
  \begin{equation}\label{rem:guido1bis}
 \Big\{x_N<\frac{\sigma x_1}{\sqrt{1-\sigma^2}}+a\Big\}\cap\mathcal C_{2r}^+(x')
\subset E\,,
\end{equation}
 for some $x'\in\R^{N-1}$ with $x_1\geq0$, $0<r\leq1$,
 $\Lambda r^2<\eta_1 (b-a)$ and 
$$
 \frac{r^2}{R}\leq \eta_1(b-a)\,,
$$
then there exist \(a'\ge a\), \(b'\le b\) with 
\[
b'-a'\le (1-\eta_1) (b-a)  
\]
 such that 
\[
\partial E\cap \mathcal \mathcal C^+_{\frac{r}{32}}(x')\subset S_{a', b'}.
\]
\end{lemma}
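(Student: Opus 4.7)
The plan is to reduce to Lemma~\ref{lm:savin} in the boundary-adjacent case, and to its purely interior analog — the same Step~3 barrier argument with Remark~\ref{rm:decay} and Lemma~\ref{covid} in place of Lemma~\ref{lm:decay} and the obstacle dichotomy — in the interior case. The factor $1/32$ in the conclusion radius (compared to the $1/2$ of Lemma~\ref{lm:savin}) is precisely what provides the slack needed to translate the center and rescale.

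\textbf{Interior case.} First I would treat the case $x_1 \geq r/8$. Here $\mathcal C_{r/8}(x')$ is compactly contained in $H$ and the hypotheses \eqref{savinglobal1}--\eqref{rem:guido1bis} restrict to it with no obstacle interaction. Running the argument of Step~3 of the proof of Lemma~\ref{lm:savin} with the Dirichlet barrier solved on a purely interior domain $\Omega \subset \R^{N-1}$ containing a ball $D_{r/16}(x')$, and invoking Remark~\ref{rm:decay} and Lemma~\ref{covid} in place of the boundary density estimates, yields the strip improvement on $\mathcal C_{r/32}(x')$.

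\textbf{Boundary case.} If $x_1 < r/8$, I would set $y' := (0, x_2, \dots, x_{N-1})$, so that $|y'-x'| = x_1 < r/8$, and set
\[
\rho_0 := \dist\bigl(y',\,\partial_{\partial H} O \cap S_{a,b}\bigr) \in (0, +\infty]\,.
\]
The idea is to apply Lemma~\ref{lm:savin} centered at $y'$ at a scale $\rho$ chosen inside the interval $[\,2x_1 + r/16,\ r - x_1/2\,]$, which is non-empty since $x_1 < r/8 < 3r/8$. Any such $\rho$ guarantees the two chain inclusions
\[
\mathcal C^+_{r/32}(x') \subset \mathcal C^+_{\rho/2}(y') \qquad \text{and} \qquad \mathcal C^+_{2\rho}(y') \subset \mathcal C^+_{2r}(x')\,,
\]
so that \eqref{savinglobal1} and \eqref{rem:guido1bis} transfer to the hypothesis cylinder and the conclusion of Lemma~\ref{lm:savin} on $\mathcal C^+_{\rho/2}(y')$ immediately yields the claim. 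The specific choice of $\rho$ is dictated by the obstacle position: if $\rho_0 > 4x_1 + r/8$ I take $\rho = 2x_1 + r/16$, so that $2\rho < \rho_0$ and \eqref{118} holds; otherwise I take $\rho = \max\{\rho_0,\, 2x_1 + r/16\}$, which still lies in $[\,r/16,\, 5r/8\,] \subset [\,2x_1 + r/16,\ r - x_1/2\,]$, and for which \eqref{117} holds since a point of $\partial_{\partial H}O \cap S_{a,b}$ at distance $\rho_0 \leq \rho$ from $y'$ witnesses $\partial_{\partial H}O \cap \mathcal C_\rho(y') \cap S_{a,b} \neq \emptyset$. In this second sub-case the curvature assumption \eqref{116} follows from $\rho^2/R \leq r^2/R \leq \eta_1(b-a)$ once $\eta_1 \leq \eta_0$.

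The remaining hypotheses $\Lambda\rho^2 \leq \eta_0(b-a)$ and $b - a \leq \eps_0 \rho$ translate from the analogous bounds in the statement by choosing $\eps_1$ and $\eta_1$ as sufficiently small multiples of $\eps_0$ and $\eta_0$ (using $\rho \geq r/16$). The main technical point is the need to produce a single $\rho$ that both fits the geometric containments and satisfies one of \eqref{117} or \eqref{118}, regardless of where the obstacle lies relative to $x'$; this is exactly what the case split on $\rho_0$ accomplishes, and it would fail without the generous factor $1/32$ that leaves a uniform gap between the lower and upper endpoints of the admissible interval of scales.
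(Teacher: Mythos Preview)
Your proposal is correct and follows the same overall strategy as the paper: split into an interior case and a boundary-adjacent case, and in each reduce to Lemma~\ref{lm:savin} (or its purely interior analog via Remark~\ref{rm:decay} and Lemma~\ref{covid}) after translating the center and shrinking the scale.

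The paper's argument is terser. After rescaling to $r=1$, it simply applies Lemma~\ref{lm:savin} at $\bar x'=(0,x_2,\dots,x_{N-1})$ with the \emph{fixed} radius $\tfrac12$ when $0<x_1\le\tfrac18$, and applies the interior version at radius $\tfrac1{16}$ when $x_1\ge\tfrac18$. Your adaptive choice of $\rho$ based on $\rho_0=\dist(y',\partial_{\partial H}O\cap S_{a,b})$ is a genuine refinement: Lemma~\ref{lm:savin} as stated requires either \eqref{117} (obstacle meets the inner cylinder) or \eqref{118} (obstacle misses the outer cylinder), and a fixed radius leaves the annular situation unverified, whereas your construction forces one of the two alternatives by design. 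Two cosmetic points: the inclusion $[r/16,5r/8]\subset[2x_1+r/16,\,r-x_1/2]$ fails at the left endpoint (though your $\rho$ is always $\ge 2x_1+r/16$ anyway, so the actual claim survives); and when $\rho=\rho_0$ the witnessing obstacle point lies on $\partial\mathcal C_\rho(y')$ rather than strictly inside, so take $\rho$ marginally larger than $\rho_0$ in that sub-case to secure \eqref{117}.
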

\begin{proof}
We start by observing that a simplified version of the same arguments in the proof of Lemma~\ref{lm:savin}, see Remark~\ref{rm:decay}, (or, alternatively,  the standard interior regularity results for $(\Lambda,r_0)$-minimizers of the perimeter) lead to the following interior version of the Lemma: there exist $\eps_1\leq\eps_0$, $\eta_1\leq\eta_0$ , such that if \eqref{savinglobal1} and \eqref{rem:guido1bis} hold with $x_1\geq2r$ and $\Lambda r^2<\eta_1 (b-a)$, then  there exist \(a'\ge a\), \(b'\le b\) with 
$
b'-a'\le (1-\eta_1) (b-a)  
$
 such that 
\begin{equation}\label{savinglobal2}
\partial E\cap \mathcal C_{\frac{r}2}(x')\subset S_{a', b'}.
\end{equation}
Therefore it is enough to prove the statement in the case $0< x_1<2r$. By rescaling, we may assume $r=1$ .

If $0< x_1\leq\frac{1}{8}$, we may apply Lemma~\ref{lm:savin} with the origin replaced by the point  $\overline x'=(0,x_2,\dots,x_N)$ and $r=\frac12$, provided $\eta_1$ is sufficiently small, thus getting $\partial E\cap \mathcal C^+_{\frac{1}{4}}(\bar x')\subset S_{a', b'}$, hence in particular $\partial E\cap \mathcal C^+_{\frac{1}{8}}(x')\subset S_{a', b'}$. 

Finally, if $x_1\geq1/8$ we just get the interior estimate \eqref{savinglobal2} with $r=\frac{1}{16}$.
\end{proof}

\begin{remark}\label{rm:savinglobal}
Let $\kappa>0$ be given. Observe that there exist  possibly smaller $\eps_1,\eta_1$, depending on $\kappa$, such that if $D_{2r}(x')\subset\{x_1>0\}$ and 
$$
 \partial E\cap \mathcal C_{2r}(x')\subset S_{a,b}^\alpha\qquad \text{with }b-a\leq \eps_1r\,,
$$
for some $\alpha\in(-\kappa,\kappa)$ and $\Lambda r^2<\eta_1 (b-a)$, then 
\[
\partial E\cap \mathcal C_{\frac{r}{32}}(x')\subset S_{a', b'}^\alpha,
\]
with \[
b'-a'\le (1-\eta_1) (b-a)\,.  
\] 
Indeed this follows with the same arguments of the proof of Lemma~\ref{lm:savin}, taking into account Remark~\ref{rm:decay} and Lemma~\ref{covid} which holds uniformly with respect to $\alpha\in[-\kappa,\kappa]$.
\end{remark}

\subsection{Flatness improvement and $\eps$-regularity} We start with the following crucial barrier argument, which forces the solution to coincide with the boundary of the obstacle if the solution is too close to a plane which does not satisfy the exact optimality condition, compare with \cite{Fernandez-RealSerra20} for a similar argument.

\begin{lemma}\label{lm:barrier}
 There exist universal constants \(\eps_2, M_0>0\) with the following property. Assume that $E\subset H$ is a $(\Lambda,1)$-minimizer of $\Fc_\sigma$ in $\R^N$ with obstacle $O\in\mathcal B_R$. Assume also  that 
 \[
\overline{\partial E \cap H} \cap \mathcal C_{2r}\subset \{ \abs{x_N-\alpha x_1}< \eps r\big\},\quad  \{x_N<\alpha x_1-\eps r\}\cap\mathcal C_{2r}^+
\subset E\,,
\]
for some  \(0<\eps \le \eps_2\),  $0<r\leq1$ with $\Lambda r<\eps$ and $\frac{r}{R}\leq\eta(1)\eps$ (where $\eta(1)$ is as in Lemma~\ref{apparte}), and for some $\alpha\ge M_0\eps+\frac{\sigma}{\sqrt{1-\sigma^2}}$. Then 
\[
\overline{\partial E \cap H}\cap \partial H\cap\mathcal C_{r/2}=\partial_{\partial H}O\cap\mathcal C_{r/2}\cap\{|x_N|<\eps r\}.
\]
\end{lemma}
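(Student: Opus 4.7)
By rescaling we may take $r=1$, and set $\beta := \sigma/\sqrt{1-\sigma^2}$, the Young's-law slope, so that by hypothesis $\alpha\geq\beta+M_0\varepsilon$. The plan is a sliding-barrier argument in the spirit of Fern\'andez-Real and Serra. I would introduce the family of half-spaces
\[
A_t := \{x_N < \beta x_1 + t\}, \qquad t\in\R,
\]
whose bounding hyperplanes $\Pi_t$ are exact Young's-law configurations: they have zero mean curvature and satisfy $\nu\cdot\nu_H=\sigma$ on $\partial H$. From the strip condition and $\alpha\geq\beta$ one checks that $A_t\cap\mathcal C_2^+\subset E^{(1)}$ for every $t\leq -\varepsilon$. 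Set
\[
t^\star := \sup\bigl\{t : A_t\cap\mathcal C_{3/2}^+\subset E^{(1)}\bigr\}.
\]
The crux is to prove $t^\star\geq\varepsilon$. Granting this, tracing $A_{t^\star}$ on $\partial H$ would force $\{x_1=0\}\cap D_{1/2}\cap\{x_N<\varepsilon\}$ to lie in the wet region of $E$; combined with the admissibility $\partial^* E\cap\partial H\subset\overline O$ and with the strip condition (which forces $\partial E\cap\partial H\subset\{|x_N|<\varepsilon\}$), the wet region in $\mathcal C_{1/2}$ would then have to coincide with $\overline O\cap\mathcal C_{1/2}\cap\{|x_N|<\varepsilon\}$. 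Its boundary in $\partial H$ is $\partial_{\partial H}O\cap\mathcal C_{1/2}\cap\{|x_N|<\varepsilon\}$, which via Proposition~\ref{p:densityestimates} coincides with the contact line $\overline{\partial E\cap H}\cap\partial H\cap\mathcal C_{1/2}$.

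To prove $t^\star\geq\varepsilon$ I would argue by contradiction: if $t^\star<\varepsilon$, the hyperplane $\Pi_{t^\star}$ touches $\partial(H\setminus E^{(1)})$ at a first point $q\in\overline{\mathcal C_{3/2}^+}$, and three cases must be analyzed. \textit{Interior tangency} $q\in H$: the flat barrier $\varphi(x')=\beta x_1+t^\star$ touches the lower graph $u^-$ from below at $q'$; since $u^-$ only satisfies $\mathrm{div}(\nabla u^-/\sqrt{1+|\nabla u^-|^2})\leq\Lambda$ in the viscosity sense (Lemma~\ref{pace}), the flat test function alone does not contradict it, but the bent perturbation
\[
\varphi_\delta(x') := \beta x_1 + t - \tfrac{\delta}{2}|x'-q'|^2,
\]
with $\delta$ of order $\Lambda(1+\beta^2)^{3/2}/(N-1)$, has mean curvature strictly less than $-\Lambda$ at $q'$ and thus cannot touch $u^-$ from below---providing the required contradiction, while the slope gap $M_0\varepsilon$ ensures that the perturbed barrier remains inside $E$ during the sliding. \textit{Boundary tangency in $O$}, $q\in\partial H\cap O$: the flat barrier saturates the viscosity Young inequality $\partial_1 u^-/\sqrt{1+|\nabla u^-|^2}\leq\sigma$ exactly; replacing $\beta$ by $\beta+\delta$ with $0<\delta<M_0\varepsilon/2$ yields a steeper barrier, still contained in $E$ thanks to the slope gap, whose contact slope $(\beta+\delta)/\sqrt{1+(\beta+\delta)^2}$ strictly exceeds $\sigma$, contradicting Lemma~\ref{pace}. \textit{Boundary tangency on} $\partial_{\partial H} O$: this is the admissible case and no contradiction is expected---the obstacle legitimately pins the barrier. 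Here the hypothesis $r/R\leq\eta(1)\varepsilon$ combined with Lemma~\ref{apparte} ensures that $\partial_{\partial H}O$ is locally a $C^{1,1}$-graph with gradient and Hessian bounded by $\varepsilon$, so the obstacle is essentially flat at scale $r$ and does not spoil any of the preceding barrier constructions.

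The main obstacle lies in the quantitative coordination of the small parameters: the constants $\varepsilon_2$ and $M_0$ must be jointly chosen so that the slope excess $M_0\varepsilon$ strictly dominates the combined contributions coming from the $\Lambda$-almost-minimality ($\Lambda<\varepsilon$), the strip thickness $\varepsilon$, and the obstacle curvature $1/R\leq\eta(1)\varepsilon$. I expect the interior-tangency case to be technically the most delicate, since the flat barrier has zero mean curvature while $u^-$ only satisfies the viscosity bound $\mathrm{div}(\cdot)\leq\Lambda$; producing the strict contradiction via the curvature-perturbed test function $\varphi_\delta$ is straightforward \emph{pointwise}, but ensuring that the sliding of $\varphi_\delta$ inside $E$ does not terminate prematurely (before we can locate the touching point) is precisely what forces $M_0$ to be chosen large---but still universal.
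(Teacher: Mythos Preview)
Your overall plan---slide a barrier from below and rule out interior and boundary-in-$O$ touching, leaving only contact along $\partial_{\partial H}O$---is exactly the right architecture, and the paper's proof follows the same outline. But your interior-tangency step contains a sign error that breaks the argument, and the paper's actual barrier is built differently to avoid precisely this issue.

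The viscosity inequality for $u^-$ from Lemma~\ref{pace} says that any $C^2$ function $\varphi$ touching $u^-$ from below at $q'$ satisfies
\[
\Div\Bigl(\frac{\nabla\varphi}{\sqrt{1+|\nabla\varphi|^2}}\Bigr)(q')\le\Lambda.
\]
To obtain a contradiction you therefore need a test function whose mean-curvature operator is \emph{strictly larger than} $\Lambda$. Your $\varphi_\delta(x')=\beta x_1+t^\star-\tfrac{\delta}{2}|x'-q'|^2$ has $D^2\varphi_\delta=-\delta I$ and hence \emph{negative} mean curvature at $q'$; the value ``$<-\Lambda$'' that you compute is perfectly consistent with the supersolution bound and yields no contradiction at all. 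A concave paraboloid touching a supersolution from below is never an obstruction.

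The paper resolves this by building a single barrier
\[
w(x')=\Bigl(\tfrac{M_0\varepsilon}{2}+\tfrac{\sigma}{\sqrt{1-\sigma^2}}\Bigr)x_1+\gamma\varepsilon\bigl(Lx_1^2-|x'-z'|^2\bigr)+\psi(z')+\nabla\psi(z')\cdot(x'-z')+c
\]
with two key features. First, the \emph{convex} term $+L x_1^2$, with $L$ chosen large, forces the mean-curvature operator to exceed $\varepsilon>\Lambda$ everywhere (see \eqref{barrier1}), so interior touching is ruled out directly. Second, the slope in $x_1$ is taken to be $\tfrac{M_0\varepsilon}{2}+\tfrac{\sigma}{\sqrt{1-\sigma^2}}$, already strictly steeper than Young's slope; this makes the boundary-in-$O$ contradiction immediate without any after-the-fact steepening. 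Your proposal to steepen the flat barrier from $\beta$ to $\beta+\delta$ only \emph{after} locating the touching point runs into the problem that $(\beta+\delta)x_1$ is larger than $\beta x_1$ for $x_1>0$, so the perturbed function need not stay below $u^-$ near $q'$ and is not an admissible test function. The role of the negative term $-\gamma\varepsilon|x'-z'|^2$ in $w$ is not to create curvature but to force $w<u^-$ on the lateral boundary $\partial D_{1/2}(z')$, so the touching point must lie inside $\overline D_{1/2}^+(z')$. The constants are then coordinated by taking $M_0>\gamma L$ and $\gamma\ge18$.
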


\begin{proof} Without loss of generality, by a rescaling argument, we may assume $r=1$.

We start by assuming that there exists $\bar y'\in\partial_{\partial H}O\cap\mathcal C_1\cap S_{-\eps,\eps}$. From the assumption on $O$ we get, thanks to Lemma~\ref{apparte} that  the connected component of $\partial_{\partial H}O\cap\mathcal C_{2}$ containing $\bar y'$ is a graph of a $C^{1,1}$ function $\psi$ such that
$\|D\psi\|_{L^\infty(\partial H\cap D_{2})}\leq\eps$, $\|D^2\psi\|_{L^\infty(\partial H\cap D_{2})}\leq\eps$.
In particular
\begin{equation}\label{barrier0.5}
\|\psi\|_{L^\infty(\partial H\cap D_{1})}\leq3\eps\,.
\end{equation}
As said, the proof is obtained via a barrier construction. To this end we fix $z'=(0,z_2,\dots,z_{N-1})$ with $|z'|\leq1/2$ and assume by contradiction that $u^-(z')<\psi(z')$.  We define for $x'\in\R^{N-1}$
\[
w(x')=\beta x_1+\gamma\eps (L x_1^2-|x'-z'|^2)+\psi(z')+\nabla\psi(z')\cdot(x'-z')+c
\]
where 
\[
\beta= \frac{M_0\eps}{2}+\frac{\sigma}{\sqrt{1-\sigma^2}}\,,
\]
$L$ and $\gamma$ are positive constants to be chosen 
and $c$ is the first constant such that the function $w$ touches from below  $u^-(x')$ in $\overline D_{1/2}^+(z')$.
 Observe that $c\leq0$, since $ w(z')=\psi(z')+c\leq u^-(z')\leq\psi(z')$, and that
\begin{equation}\label{barrier1-}
\frac1\eps\Div \biggl(\frac{\nabla w}{\sqrt{1+|\nabla w|^2}}\biggr)\to 2\gamma\frac{(L-N+1)(1+|\xi_0|^2)-(L-1)|\xi_0|^2}{(1+|\xi_0|^2)^{3/2}}
\end{equation}
uniformly in $D_2$ as $\eps\to0$, where $\xi_0=\sigma e_1/\sqrt{1-\sigma^2}$. Therefore, if $L$ is chosen so large that the right hand side of \eqref{barrier1-} is bigger than $2$,  there exists $\eps_2$ depending on $M_0$ such that if $0<\eps<\eps_2$ then 
\begin{equation}\label{barrier1}
\Div \biggl(\frac{\nabla w}{\sqrt{1+|\nabla w|^2}}\biggr)>\eps\,.
\end{equation}
Observe that if $x'\in\partial D_{1/2}(z')\cap\{x_1\geq0\}$, then
$$
w(x')\leq\beta x_1+\frac{\gamma L\eps}{2} x_1-\frac{\gamma}{4}\eps+\psi(z')+\nabla\psi(z')\cdot(x'-z')< \alpha x_1-\eps\leq u^-(x)\,,
$$
provided $M_0>\gamma L$ and $\gamma\geq18$, where in the last inequality we used \eqref{barrier0.5}.  We claim that $w$ does not touch $u^-$ at a point $x_0\in D^+_{\frac12}(z')$. Indeed if this is the case,  by Lemma~\ref{pace}, recalling that  $\Lambda$ by assumption is smaller than $\eps$ we would get that
$$
\Div \biggl(\frac{\nabla w}{\sqrt{1+|\nabla w|^2}}\biggr)(x_0)\leq\eps
$$
a contradiction to \eqref{barrier1}.
Thus we may conclude that $w$ touches $u^-$ at a point $\overline x'\in\{x_1=0\}\cap\partial D^+_{\frac12}(z')$. Assume by contradiction that $u^-(\overline x')<\psi(\overline x')$, then from the last inequality in \eqref{irene-1}, observing that 
$$
|\nabla w(\overline x')-\partial_1w(\overline x') \,e_1|\leq C\eps\,,
$$
for a constant depending only on $N$, we have
$$
\partial_1w(\overline x')\leq \frac{\sigma}{\sqrt{1-\sigma^2}}+\frac{\sigma^+}{\sqrt{1-\sigma^2}}\frac{C^2\eps^2}{2}\,.
$$
which is impossible, provided that $M_0$ is sufficiently large, since 
$$
\partial_1w(\overline x')=\frac{M_0\eps}{2}+\frac{\sigma}{\sqrt{1-\sigma^2}}\,.
$$
Therefore, at the touching point we have
$$
0=u^-(\overline x')-w(\overline x')=\psi(\overline x')-w(\overline x')\,,
$$
hence
$$
\psi(\overline x')=\psi(z')+\nabla\psi(z')\cdot(\overline x'-z')+c-\gamma\eps|\overline x'-z'|^2\,.
$$
On the other hand, recalling that $|D^2\psi|\leq \eps$, we have
$$
\psi(\overline x')\geq\psi(z')+\nabla\psi(z')\cdot(\overline x'-z')-\eps|\overline x'-z'|^2\,.
$$
Combining the two inequalities we get  that $0\geq c\geq(\gamma-1)\eps|\overline x'-z'|^2$. Therefore $\overline x'=z'$ hence $\psi(z')=u^-(z')$.

If instead $\partial_{\partial H}O\cap\mathcal C_1\cap S_{-\eps,\eps}=\emptyset$, we may repeat the same argument as before with $\psi$ replaced by $\tilde\psi\equiv\eps$, obtaining that $u^\pm=\tilde\psi=\eps$ in $\partial H\cap\mathcal C_{1/2}$ which is impossible by assumption.
\end{proof}
\begin{remark}\label{rm:barrier}
Note that the assumptions of the previous lemma force the obstacle $\psi$ to satisfy the inequality
$|\psi|<\eps r $ in $\partial H\cap \mathcal C_{r/2}$.
\end{remark}


\begin{lemma}\label{itera}
For all \(\tau\in (0,1/2)\), \(M>0\),  there exist constants \(\lambda_0=\lambda_0(M,\tau)>0\), \(C_2=C_2(M,\tau)>0\) such that for all \(\lambda\in (0, \lambda_0)\) one can find  \( \eps_3=\eps_3(M,\tau,  \lambda)>0\),  \( \eta_2=\eta_2(M,\tau,  \lambda)>0\) with the following property: Assume  $E\subset H$ is a $(\Lambda,1)$-minimizer of $\Fc_\sigma$ in $\R^N$ with obstacle $O\in\mathcal B_R$. Assume also that $0\in\overline{\partial E\cap H}$ and
$$
\overline{\partial E\cap H} \cap \mathcal C_r\subset \{ \abs{x_N-\alpha x_1}< \eps r\big\}, \quad  \{x_N<\alpha x_1-\eps r\}\cap\mathcal C_{r}^+\subset E\,,
$$
for some \(0<\eps \le \eps_3\), $0<r\leq1$ with $\Lambda r<\eta_2\eps $ and $\frac{r}{R}\leq \eta_2\eps$, and for some
 \(\frac{\sigma}{\sqrt{1-\sigma^2}}\le \alpha \le M \eps +\frac{\sigma}{\sqrt{1-\sigma^2}}\).
 Then  there exist \(\bar \alpha\ge\frac{\sigma}{\sqrt{1-\sigma^2}} \), a rotation \(R\) about the \(x_1\) axis,  with  \(\|R-\Id\|\le C_2\eps\),  \(|\bar \alpha-\alpha|\le C_2 \eps\),  such that 
\[
R( \overline{\partial E\cap H}) \cap\mathcal C_{\lambda r}\subset \{ \abs{x_N-\bar\alpha x_1}< \lambda^{1+\tau} \eps r\big\}.
\]
\end{lemma}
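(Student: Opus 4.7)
I would prove Lemma~\ref{itera} by a contradiction/compactness argument followed by linearization to a Signorini-type thin obstacle problem. First I rescale to $r=1$ and negate the conclusion: assume there exist sequences $\eps_h\to 0$, $\alpha_h\in[\tfrac{\sigma}{\sqrt{1-\sigma^2}},\,M\eps_h+\tfrac{\sigma}{\sqrt{1-\sigma^2}}]$, $(\Lambda_h,1)$-minimizers $E_h$ with obstacles $O_h\in\mathcal B_{R_h}$, with $\Lambda_h/\eps_h\to 0$ and $1/(R_h\eps_h)\to 0$, which satisfy the flatness hypothesis at scale $r=1$ and slope $\alpha_h$, but for which no admissible rotation and slope $(\bar\alpha_h,R_h)$ as in the conclusion exists. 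Note $\alpha_h\to\tfrac{\sigma}{\sqrt{1-\sigma^2}}$, so in the limit we are precisely at the Young slope and the barrier of Lemma~\ref{lm:barrier} does not apply; the linearization must instead detect the thin obstacle.

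\textbf{Hölder equicontinuity via partial Harnack.} Define the rescaled excess functions
\[
v_h^\pm(x')\,:=\,\frac{u_h^\pm(x')-\alpha_h x_1}{\eps_h},\qquad x'\in D_{1/2}^+,
\]
which take values in $[-1,1]$ by the flatness hypothesis (and thanks to Remark~\ref{rem:guido}). The key step is to show that the $v_h^\pm$ are uniformly Hölder equicontinuous on $\overline{D_{1/2}^+}$. For interior points with $x_1$ bounded away from $0$ this follows from the interior flatness decay (Lemma~\ref{lm:savinglobal} and Remark~\ref{rm:savinglobal}) iterated over dyadic scales, which forces the oscillation of $v_h^\pm$ over $\mathcal C_{2^{-k}r}(x')$ to decay geometrically. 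For points touching $\partial H$ I use Lemma~\ref{lm:savin} (alternative~\eqref{118}) away from the obstacle, and alternative~\eqref{117} when the obstacle is inside the strip; in the latter case, thanks to Lemma~\ref{apparte}, the obstacle profile $\psi_h$ scaled by $\eps_h^{-1}$ converges to a smooth hypersurface in $\partial H$, so Lemma~\ref{lm:decay} applies and provides the required oscillation decay uniformly in $h$. Finally, Lemma~\ref{covid} ensures that $v_h^+=v_h^-$ outside a set of vanishing measure, so both sequences share a common uniform modulus of continuity.

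\textbf{Passage to the limit and the Signorini problem.} Up to subsequences, $v_h^\pm\to v$ locally uniformly on $D_{1/2}^+$, with $v$ continuous up to $\Gamma=D_{1/2}\cap\{x_1=0\}$. Linearizing the minimal surface equation and the viscosity inequalities of Lemma~\ref{pace} around the plane $x_N=\alpha_h x_1$ (with slope tending to the Young slope $\sigma/\sqrt{1-\sigma^2}$), and using that $\Lambda_h/\eps_h\to 0$, the stability of viscosity solutions yields the elliptic equation
\[
\mathrm{div}\!\left(A\nabla v\right)=0\quad\text{in }D_{1/2}^+,
\]
where $A$ is the constant, positive-definite symmetric matrix arising from the linearization of the minimal surface operator at the constant slope $\tfrac{\sigma}{\sqrt{1-\sigma^2}}e_1$. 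On $\Gamma$ the linearized Young condition produces a Neumann condition of the form $A\nabla v\cdot e_1\le 0$ where the contact set is active, and $A\nabla v\cdot e_1=0$ on the complement of the (rescaled) obstacle. Since the obstacle surfaces $\pa_{\pa H}O_h$, rescaled by $\eps_h^{-1}$, collapse to a half-hyperplane in $\Gamma$ (because $1/(R_h\eps_h)\to 0$), $v$ solves a linear Signorini problem on $\Gamma$ with a flat thin obstacle, up to an affine correction.

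\textbf{Decay for the Signorini limit and contradiction.} The regularity theory for the Signorini problem provides $C^{1,1/2^-}$ regularity of $v$ at $0$; in particular, for every $\tau\in(0,1/2)$ there exists an affine tangent function $L(x)=c_1 x_1+c'\cdot x'$ with $c_1\ge 0$ (reflecting the obstacle constraint which enforces a one-sided Young condition) such that
\[
\sup_{\mathcal C_\lambda\cap D_{1/2}^+}|v-L|\le C(\tau)\,\lambda^{1+\tau}
\]
for all $\lambda\in(0,\lambda_0)$. Translating this decay back to $u_h^\pm$ using the uniform convergence $v_h^\pm\to v$, and rotating the strip by the rotation $R_h$ associated to the tangential gradient $c'$ of $L$ (with $\|R_h-\Id\|\le C\eps_h$), and updating the slope to $\bar\alpha_h=\alpha_h+c_1\eps_h$, contradicts the failure of the conclusion for $h$ large. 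The main obstacles are the careful passage to the limit on the free-boundary and obstacle conditions (requiring viscosity stability uniform in the vanishing obstacle curvature) and the invocation of the Signorini decay with the sharp Hölder exponent $1/2^-$, which is exactly what caps the regularity in the conclusion at $\tau<1/2$.
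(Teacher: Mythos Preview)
Your proposal is correct and follows essentially the same approach as the paper: a contradiction/compactness argument, uniform H\"older equicontinuity of the rescaled height functions $v_h^\pm$ via iteration of the partial Harnack inequality (Lemma~\ref{lm:savinglobal}), passage to a Signorini-type limit using the viscosity inequalities of Lemma~\ref{pace}, and then the $C^{1,1/2}$ regularity of the Signorini problem to extract the decay and derive the contradiction. The paper also explicitly treats the case where $\partial_{\partial H}O_h$ does not intersect the flat strip (yielding a linear Neumann problem rather than Signorini), and keeps track of the limit $\gamma=\lim \eps_h^{-1}(\alpha_h-\tfrac{\sigma}{\sqrt{1-\sigma^2}})\in[0,M]$ to verify $\bar\alpha\ge \tfrac{\sigma}{\sqrt{1-\sigma^2}}$, but these are refinements of the outline you already have.
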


\begin{proof}  By rescaling it is enough to show the statement for $r=1$.
We argue by contradiction and we assume that there exist  \(\tau \) and \(M\)  and sequences \(\epsilon_n\to 0\),  \(\frac{\sigma}{\sqrt{1-\sigma^2}}\le \alpha_n \le M\epsilon_n+\frac{\sigma}{\sqrt{1-\sigma^2}}\),  $(\Lambda_n,1)$-minimizers \(E_n\) with $0\in\overline{\partial E_n\cap H}$, $\overline{\partial E_n\cap H} \cap \mathcal C_1\subset \{ \abs{x_N-\alpha_n x_1}< \epsilon_n\big\}$ and $\{x_N<\alpha x_1-\eps r\}\cap\mathcal C_{r}^+\subset E$, $\Lambda_n$, $O_n\in\mathcal B_{R_n}$, such that 
 $\Lambda_n\leq\frac{\epsilon_n}{n}$, $\frac{1}{R_n}\leq\frac{\epsilon_n}{n}$  for which the conclusion fails for all \(\bar \alpha \), \(R\)   with \(|\bar \alpha-\alpha_n|\le C_2 \epsilon_n\), \(\|R-\Id\|\le C_2\epsilon_n\),  and for a certain  \(\lambda \le \lambda_0\). We will show that for a suitable choice of  \(C_2\)  and \(\lambda_0\)  depending only on \(M\), \(\tau\) and the dimension this will lead to a contradiction.
 
 We claim that there exist universal constants $C>0$, $\beta\in(0,1]$ and $\gamma_n>0$, $\gamma_n\to0$, such that for all $x',y'\in \overline D^+_{\frac12}$, with $|x'-y'|\geq\gamma_n$,
 \begin{equation}\label{itera0}
 |u_{n,\sigma}^{\pm}(x')-u_{n,\sigma}^{\pm}(y')|\leq C\epsilon_n|x-y|^\beta,\qquad |u_{n,\sigma}^{\pm}(x')-u_{n}^{\mp}(y')|\leq C\epsilon_n|x-y|^\beta\,,
 \end{equation}
 where  $u^{\pm}_{n,\sigma}$ are the functions defined as $u^\pm_{\sigma}$  with $E$ replaced by $E_n$.
 
 To prove the claim we  fix  $x'\in\overline D^+_{\frac12}$. Note that our assumptions imply that if $0<\varrho\leq
\frac12$
\[
\overline{\partial E_n\cap H} \cap\mathcal C_{\varrho}(x')\subset S_{a,b}\,,
\]
with $b-a= 2(1+M)\epsilon_n$.
 We now apply Lemma~\ref{lm:savinglobal}  to conclude that for all $y'\in \overline D^+_{\frac{\varrho}{64}}(x')$
\begin{equation}\label{itera1}
|u^{\pm}_{n,\sigma}(x')-u^{\pm}_{n,\sigma}(y')| <2(1+M)(1-\eta_1)\epsilon_n,\,\,\,\,\,|u^{\pm}_{n,\sigma}(x')-u^{\mp}_{n,\sigma}(y')|< 2(1+M)(1-\eta_1)\epsilon_n,
\end{equation}
provided that
\begin{equation}\label{itera2}
4(1+M)\epsilon_n\leq \eps_1 \varrho\quad\text{and}\quad   \frac{\varrho^2}{n}\leq8\eta_1(1+M)\,.
\end{equation}
Thus,    inequalities \eqref{itera1} hold for $n$ sufficiently large.
Note that \eqref{itera1} implies that 
$$
\overline{\partial E_n\cap H} \cap \mathcal C_{\frac{\varrho}{64}}(x')\subset  S_{a',b'}\quad\text{with \,\,$b'-a'\leq 2(1+M)(1-\eta_1)\epsilon_n$.}
$$
Therefore, by applying Lemma~\ref{lm:savinglobal} $m_n-1$ more times   we conclude that for all $y'\in \overline D^+_{\frac{\varrho}{64^{j+1}}}(x')$, $j=0,1,\dots,m_n-1$,
\begin{equation}\label{itera4}
\begin{split}
&|u^{\pm}_{n,\sigma}(x')-u^{\pm}_{n,\sigma}(y')|< 2(1+M)(1-\eta_1)^{j+1}\epsilon_n,  \\
&|u^{\pm}_{n,\sigma}(x')-u^{\mp}_{n,\sigma}(y')|< 2(1+M)(1-\eta_1)^{j+1}\epsilon_n,
\end{split}
\end{equation}
provided that
$$
4\cdot64^{j}(1+M)(1-\eta_1)^j\epsilon_n\leq\eps_1\varrho, \quad \frac{\varrho^2}{64^{2j}n}\leq8\eta_1(1-\eta_1)^{j}(1+M)\,.
$$
Note that the last  inequality follows from the second one in \eqref{itera2}, since $\eta_1<\frac12$, while the first holds provided $m_n$ is the largest integer for which
$$
4\cdot64^{m_n-1}(1+M)(1-\eta_1)^{m_n-1}\epsilon_n\leq\eps_1\varrho\,.
$$
In particular, taking $\varrho=\frac12$ and  applying \eqref{itera4} with $y\in\overline D^+_{\frac{\varrho}{64^j}}(x')\setminus D^+_{\frac{\varrho}{64^{j+1}}}(x')$  and $j=1,\dots m_n-1$   we easily get  that if $y'\in \overline D^+_{1/128}(x')\setminus D^+_{\gamma_n}(x')$, with $\gamma_n=\frac{1}{2\cdot64^{m_n}}$, then
$$
|u^{\pm}_{n,\sigma}(x')-u^{\pm}_{n,\sigma}(y')|\leq C\epsilon_n|x'-y'|^\beta,\qquad |u^{\pm}_{n,\sigma}(x')-u^{\mp}_{n,\sigma}(y')|\leq C\epsilon_n|x'-y'|^\beta\,,
$$
for a constant $C$ depending only on $M$  and $\beta=-\frac{\log(1-\eta_1)}{\log64}$. This proves \eqref{itera0} when $x'\in\overline D^+_{\frac12}$ and $y'\in D^+_{\frac{1}{128}}(x')$, with $|x'-y'|\geq\gamma_n$. Clearly, $\gamma_n\to0$, since $m_n\to\infty$. 

In particular if we consider the functions 
 \[
 v^{\pm}_{n}(x')=\frac{u^{\pm}_{n,\sigma}(x')-\Big(\alpha_n-\frac{\sigma}{\sqrt{1-\sigma^2}}\Big) x_1}{\epsilon_n}=\frac{u^{\pm}_{n}(x')-\alpha_nx_1}{\epsilon_n}
 \]
they converge (up to a subsequence)  to the same  H\"older continuous function \(v\) defined on \(D_{1/2}^+\) (note that \(\big|\alpha_n-\frac{\sigma}{\sqrt{1-\sigma^2}}\big|\le M\epsilon_n\)). Furthermore \(\|v\|_{\infty}\le 1\) and $v(0)=0$, since the assumption $0\in\overline{\partial E_n\cap H}$ implies that $u^{-}_{n,\sigma}(0)\leq0\leq u^{+}_{n,\sigma}(0)$. We also assume that (up to further subsequence), 
\beq\label{gamma1000}
\frac{1}{\epsilon_n}\Big(\alpha_n-\frac{\sigma}{\sqrt{1-\sigma^2}}\Big) \to\gamma \in [0,M]\,.
\eeq

We now consider two possible cases. Let us assume first that for all $n$ there exists $y'_n\in \partial_{\partial H}O_n\cap\mathcal C_{\frac12}\cap\{-\epsilon_n<x_N<\epsilon_n\}$. Then by Lemma~\ref{apparte},  we have that for $n$ large the connected component of $\partial_{\partial H}O_n\cap\mathcal C_{1}$ containing $y_n'$ is a graph of a $C^{1,1}$ function $\psi_n$ such that
$$
\|D\psi_n\|_{L^\infty(\partial H\cap D_{1})}\leq2\eps_n,\quad\|D^2\psi_n\|_{L^\infty(\partial H\cap D_{1})}\leq\delta_n\eps_n\,
$$
for a suitable sequence $\delta_n$ converging to zero.
Recall that
$$
\frac{\psi_n(y'_n)}{\epsilon_n}\in(-1,1)\,.
$$
Therefore we may assume that, up to a subsequence, 
$$
\frac{\psi_n(x')}{\epsilon_n}\to\psi_\infty+\omega\cdot x'\quad\text{uniformly with respect to $x'\in D_{1}\cap\partial H$}
$$
for some $\psi_\infty\in[-2,2]$ and  $\omega\in\R^{N-1}$ of the form $(0,\omega_2,\dots,\omega_{N-1})$ with $|\omega|\leq2.$ 

We now claim that \(v-\omega\cdot x'\) satisfies 
\begin{equation}\label{itera5}
\begin{cases}
Lw:=\Delta w(1+|\xi_0|^2)-D_{11}w|\xi_0|^2 =0\qquad &\text{in \(D_{1/2}^+\)}
\\
w\le \psi_\infty  &\text{on \(D_{1/2}\cap\{x_1=0\}\)}
\\
\partial_1 w\ge -\gamma &\text{on \(D_{1/2}\cap\{x_1=0\}\)}
\\
(\psi_\infty-w)(\partial_1w+\gamma)=0 &\text{on \(D_{1/2}\cap\{x_1=0\}\)}
\end{cases}
\end{equation}
in the viscosity sense. To this aim observe that the functions $u^\pm_{n}$, are subsolutions, respectively supersolutions, of 
\[
\displaystyle\Div \Biggl(\frac{\nabla w}{\sqrt{1+|\nabla  w|^2}}\Biggr) =\mp\frac{\epsilon_n}{n}
\]
thanks to Lemma~\ref{pace}. Therefore $v_{n}^-$ and $v_{n}^+ $ are supersolutions, respectively subsolutions, of 
$$
 L_nw:=\Delta w-\dfrac{D^2 w (\alpha_n e_1+\epsilon_n \nabla w)(\alpha_n e_1+\epsilon_n \nabla w)}{1+|\alpha_n e_1+\epsilon_n \nabla w|^2}=\pm\frac1n\sqrt{1+|\alpha_n e_1+\epsilon_n \nabla w|^2}\,.
$$
Passing to the limit in the previous equation, recalling that $v_{n}^\pm$ converge to $v$ uniformly in $D^+_{\frac12}$ and using the stability of viscosity super- and subsolutions, we get that $v$, hence $v-\omega\cdot x'$, is a viscosity solution of the first equation in \eqref{itera5}. Recalling that $v^{\pm}_{n}\leq\frac{\psi_n}{\epsilon_n}$ on $D_{\frac12}\cap\partial H$, we get that $v(x')\leq\psi_\infty+\omega\cdot x'$ for all $x'\in  D_{\frac12}\cap\partial H$, hence the second inequality in \eqref{itera5} follows for $v-\langle\omega,\cdot\rangle$. 

To prove the third inequality we take a $C^2$ test function $\varphi$ touching $v$ from above in $D^+_{\frac12}$ at a  point $\overline x'\in D_{\frac12}\cap\{x_1=0\}$. Without loss of generality we may assume that $\overline x'$ is the unique touching point. We argue by contradiction assuming that $\partial_1\varphi(\overline x')<-\gamma$.  If this is the case the function
\begin{equation}\label{itera7}
\tilde\varphi(x')=-\tilde\gamma x_1- bx_1^2+\varphi(0,x_2,\dots,x_{N-1}),\qquad\text{with $\tilde\gamma\in(\gamma,-\partial_1\varphi(\overline x'))$}
\end{equation}
and $b>0$ to be chosen,
stays above $\varphi$ and hence above $v$ in a neighborhood of $\overline x'$. In particular $\overline x'$ is the unique touching point between $\tilde\varphi$ and $v$ in such a neighborhood. Therefore there exists a sequence $x_n'\in\overline D^+_{\frac12}$ converging to $\overline x'$ such that $x'_n$ is a local maximizer of $v_{n}^+-\tilde\varphi$. If $x_n'\in D^+_{\frac12}$ for infinitely many $n$, thus recalling the subsolution property of $v^+_n$ we have  
$$
L_n\tilde\varphi(x_n')\geq-\frac1n\sqrt{1+|\alpha_n e_1+\epsilon_n \nabla\tilde\varphi(x_n')|^2}. 
$$
Thus, passing to the limit, $L\tilde\varphi(\overline x')\geq0$ which is impossible if we choose $b$ so large that  $L\tilde\varphi(\overline x')<0$. Otherwise,  $x_n'\in D_{\frac12}\cap\{x_1=0\}$ for infinitely many $n$. In particular for all such $n$ the function $u^+_n-(\epsilon_n\tilde\varphi+\alpha_nx_1)$ has a local maximum in $x'_n$. Hence by the third inequality in \eqref{irene-1}   we have
$$
\epsilon_n\partial_1\tilde\varphi(x'_n)+\alpha_n\geq \frac{\sigma}{\sqrt{1-\sigma^2}}-\frac{\sigma^-}{\sqrt{1-\sigma^2}}\frac{\epsilon_n^2\|\nabla\tilde\varphi\|^2_{L^\infty(D^+_1)}}{2}\,.
$$
Dividing the previous inequality by $\epsilon_n$ and recalling the definition of $\gamma$ we have
$$
-\tilde\gamma+\gamma\geq0\,,
$$
which is a contradiction to \eqref{itera7}. This shows the third inequality in \eqref{itera5}.

To show the last equality we take a test function $\varphi$ such that $v-\langle\omega,\cdot\rangle-\varphi$ has a strict local minimum at a point $\overline x'\in  D_{\frac12}\cap\{x_1=0\}$ such that $v(\overline x')-\omega\cdot\overline x'<\psi_\infty$. Then, arguing by contradiction as before and using $u^-_n$ and the fourth inequality in \eqref{irene-1} in place of $u^+_n$ and the third inequality in \eqref{irene-1} we infer that  $\partial_1\varphi(\overline x')\leq-\gamma$, thus getting also the last equality in \eqref{itera5} in the viscosity sense.

Thus,  the function $\overline w(x'):=\psi_\infty-v(x')+\omega\cdot x'-\gamma x_1$ solves the following Signorini type problem
\[
\begin{cases}
\Delta\overline w(1+|\xi_0|^2)-D_{11}\overline w|\xi_0|^2 =0\qquad &\text{in \(D_{1/2}^+\)}
\\
\overline w\ge 0 &\text{on \(D_{1/2}\cap\{x_1=0\}\)}
\\
\partial_1\overline w\le0  &\text{on \(D_{1/2}\cap\{x_1=0\}\)}
\\
\overline w\,\partial_1\overline w=0 &\text{on \(D_{1/2}\cap\{x_1=0\}\)}
\end{cases}
\]
in the viscosity sense. 
In particular by the regularity estimates  proved in \cite{Misi} (see also \cite{Guillen09}) we infer that there exist a universal constant \({C}\), such that for all \(\lambda < 1/4\)
\[
\sup_{D^+_{\lambda}} \abs[\big]{v(x')-\nabla v(0) \cdot x'}=\sup_{D^+_{\lambda}} \abs[\big]{\overline w(x')-\overline w(0)-\nabla \overline w(0) \cdot x'}\le {C} \lambda^{\frac{3}{2}}\|\overline w\|_{L^\infty(D^+_{\frac12})}\le C(1+M) \lambda^{\frac{3}{2}}
\]
and 
\[
\abs{\nabla v(0)}\le  C\|\overline w\|_{L^\infty(D^+_{\frac12})}\leq C (1+M)\,.
\]
We first choose \(\lambda_0\) so that \(C(1+M) \lambda^{\frac{3}{2}}<\frac14\lambda^{1+\tau}\) for all \(\lambda\le \lambda_0\). Therefore, by the above estimate and uniform convergence of $v_{n}^\pm$ to $v$ we get for $n$ large, recalling that $v(0)=0$,
\begin{align*}
& (\epsilon_n\partial_1v(0)+\alpha_n)x_1+\epsilon_n(\nabla v(0)-\partial_1v(0)e_1)\cdot x'-\frac14\lambda^{1+\tau}\epsilon_n
\\
&\quad <
u^-_n(x')\leq u^+_n(x')< (\epsilon_n\partial_1v(0)+\alpha_n)x_1+\epsilon_n(\nabla v(0)-\partial_1v(0)e_1)\cdot x'+\frac14\lambda^{1+\tau}\epsilon_n\,.
\end{align*}
In particular, setting 
\[
 \bar \alpha_n=\epsilon_n\Big(\partial_1 v( 0)+\frac{\lambda^{1+\tau}}{4}\Big) +\alpha_n
\]
and
\[
 \boldsymbol v_n=\epsilon_n(\nabla v (0)-\partial_1 v_1(0) e_1)
\]
 $\partial E_n\cap\mathcal C_\lambda$ is contained in the strip 
$$
S_n:=\Big\{|x_N-\bar\alpha_nx_1-\boldsymbol v_n\cdot x'|<\frac12\lambda^{1+\tau}\epsilon_n\Big\}\,.
$$
Note that, since by \eqref{itera5} $\partial_1v(0)\geq-\gamma$, recalling \eqref{gamma1000}, we have that for $n$ sufficiently large  $\bar\alpha_n>\frac{\sigma}{\sqrt{1-\sigma^2}}$. 
Hence, if  \(R_n\) is the rotation around  \(x_1\) which maps the vector
\[
\frac{e_N-\boldsymbol v_n}{\abs{e_N-\boldsymbol v_n}}
\]
in $e_N$, we conclude that
$$
R_n(\overline{\partial E_n\cap H})\cap \mathcal C_\lambda\subset  \{ \abs{x_N-\bar\alpha_n x_1}< \lambda^{1+\tau} \epsilon_n\big\}\,,
$$
thus giving a contradiction in this case.

If instead for infinitely many $n$ we have that $\partial_{\partial H}O_n\cap\mathcal C_{\frac12}\cap\{-\epsilon_n<x_N<\epsilon_n\}=\emptyset$, 
 then reasoning as above we may infer that the function $\tilde w(x'):=v(x')+\gamma x_1$ solves the Neumann problem\[
\begin{cases}
\Delta \tilde w(1+|\xi_0|^2)-D_{11}\tilde w|\xi_0|^2 =0\qquad &\text{in \(D_{1/2}^+\)}
\\
\partial_1\tilde w=0  &\text{on \(D_{1/2}\cap\{x_1=0\}\)}\,.
\end{cases}
\]

The same argument, now using the more standard elliptic estimates for the Neumann problem, leads to a contradiction also in this case.
\end{proof}

 Next Lemma is the interior case of the above estimate and the proof  follows from the interior version of the previous arguments taking into account Remark~\ref{rm:savinglobal}.

\begin{lemma}\label{iteraintern}
For all \(\tau\in (0,1)\), $\kappa>0$ there exist constants \(\lambda_1=\lambda_1(\tau, \kappa)>0\), \(C_3=C_3(\tau, \kappa)>0\) such that for all \(\lambda\in (0, \lambda_1)\) there exist  \( \eps_4=\eps_4(\tau,\kappa, \lambda)>0\),  \( \eta_3=\eta_3(\tau,\kappa,   \lambda)>0\) with the following property. If $E\subset H$ is a $(\Lambda,1)$-minimizer of $\Fc_\sigma$ in $\R^N$ with obstacle $O$ such that if $\bar x\in\partial E$,  $D_r(\bar x')\subset\{x_1>0\}$, $0<r\leq1$,
\[
\partial (E-\bar x) \cap \mathcal C_r\subset \{ \abs{x_N-\alpha x_1}< \eps r\big\}\,,
\]
with \(\eps \le \eps_4\), for some $\alpha\in[-\kappa,\kappa]$,
 $\Lambda r<\eta_3\eps$, 
 then  there exist \(\bar \alpha\in\R \), a rotation \(R\)  about the $x_1$ axis,  with  \(\|R-\Id\|\le C_3\eps\),  \(|\bar \alpha-\alpha|\le C_3 \eps\),  such that 
\[
\partial R(E-\bar x) \cap \mathcal C_{\lambda r}\subset \{ \abs{x_N-\bar\alpha x_1}< \lambda^{1+\tau} \eps r\big\}.
\]
\end{lemma}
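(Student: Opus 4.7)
The plan is to follow the blueprint of the proof of Lemma~\ref{itera}, with two essential simplifications coming from the interior hypothesis $D_r(\bar x')\subset\{x_1>0\}$: the obstacle $O$ plays no role (the cylinder $\bar x + \mathcal{C}_r$ sits at distance $\geq r$ from $\pa H$), and the limit PDE will be a constant-coefficient linear uniformly elliptic equation posed on a full ball, so one can bypass the Signorini machinery and instead invoke classical interior $C^2$-estimates. I argue by contradiction. Fix $\tau\in(0,1)$ and $\kappa>0$, and suppose no choice of $\lambda_1,C_3$ works. Then for some $\lambda\in(0,\lambda_1)$ there are sequences $\eps_n\to 0$, $\alpha_n\in[-\kappa,\kappa]$, $\bar x_n\in\pa E_n$, and $(\Lambda_n,1)$-minimizers $E_n$ with $\Lambda_n\leq \eps_n/n$, meeting the hypotheses with $\eps=\eps_n$ but for which the conclusion fails. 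By translation and rescaling, assume $\bar x_n=0$ and $r=1$, and define on $D_1$
\[
u_n^+(x')=\sup\{x_N:(x',x_N)\in\pa E_n\},\qquad u_n^-(x')=\inf\{x_N:(x',x_N)\in\pa E_n\}.
\]

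Iterating Remark~\ref{rm:savinglobal} in the dyadic scheme used to derive \eqref{itera4} in the proof of Lemma~\ref{itera} yields a uniform H\"older bound for $v_n^\pm:=(u_n^\pm-\alpha_n x_1)/\eps_n$ on $D_{1/2}$, with exponent $\beta$ and constant depending only on $\kappa$. Up to a not relabeled subsequence, $v_n^\pm$ converge uniformly to a common H\"older function $v$ on $D_{1/2}$ with $\|v\|_\infty\leq 1$ and $v(0)=0$ (because $0\in\pa E_n$ forces $u_n^-(0)\leq 0\leq u_n^+(0)$), and $\alpha_n\to\alpha_\infty\in[-\kappa,\kappa]$. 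The interior analogue of Lemma~\ref{pace} (i.e.\ the classical viscosity formulation of the mean-curvature equation for $(\Lambda,r_0)$-minimizers of the perimeter) gives that $u_n^+$ is a viscosity subsolution and $u_n^-$ a viscosity supersolution of $\Div(\nabla u/\sqrt{1+|\nabla u|^2})=\mp\Lambda_n$ in $D_1$. Substituting $u_n^\pm=\alpha_n x_1+\eps_n v_n^\pm$, dividing by $\eps_n$, and using $\Lambda_n/\eps_n\leq 1/n\to 0$, the stability of viscosity solutions yields that $v$ is a viscosity (hence classical) solution of
\[
(1+\alpha_\infty^2)\Delta v-\alpha_\infty^2\,D_{11}v=0 \qquad\text{in }D_{1/2}.
\]

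By standard interior estimates for constant-coefficient uniformly elliptic equations,
\[
\sup_{D_\lambda}\bigl|v(x')-\nabla v(0)\cdot x'\bigr|\le C\lambda^2\|v\|_{L^\infty(D_{1/2})}\le C\lambda^2,\qquad |\nabla v(0)|\le C,
\]
with $C$ depending only on $\kappa$. Fix $\lambda_1$ so that $C\lambda^2\leq \tfrac14\lambda^{1+\tau}$ for all $\lambda\leq\lambda_1$ (possible since $\tau<1$), and set $C_3:=2C$. By uniform convergence of $v_n^\pm$ to $v$, for $n$ large,
\[
\pa E_n\cap\mathcal{C}_\lambda\subset\Bigl\{\bigl|x_N-\bar\alpha_n x_1-\boldsymbol v_n\cdot x'\bigr|<\tfrac12\lambda^{1+\tau}\eps_n\Bigr\},
\]
with $\bar\alpha_n:=\alpha_n+\eps_n\pa_1 v(0)$ and $\boldsymbol v_n:=\eps_n(\nabla v(0)-\pa_1 v(0)e_1)$. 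Composing with the rotation $R_n$ about the $x_1$-axis that maps $(e_N-\boldsymbol v_n)/|e_N-\boldsymbol v_n|$ to $e_N$ (which satisfies $\|R_n-\Id\|\leq C_3\eps_n$ and $|\bar\alpha_n-\alpha_n|\leq C_3\eps_n$), we arrive at the conclusion for $E_n$ at scale $\lambda$, contradicting our standing assumption.

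The only truly delicate ingredient is the partial Harnack inequality built into Remark~\ref{rm:savinglobal}, which grants the H\"older equicontinuity of $v_n^\pm$ and hence the compactness step. Once this is in hand, the argument is the standard contradiction/compactness/linearization scheme, and it is decisively cleaner than in Lemma~\ref{itera} because there is no boundary contribution to pass to the limit, so the limit problem is a plain linear PDE on a ball rather than a Signorini problem.
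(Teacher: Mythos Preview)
Your proposal is correct and follows exactly the approach the paper indicates: the paper's own ``proof'' of Lemma~\ref{iteraintern} merely says the result follows from the interior version of the arguments in Lemma~\ref{itera}, taking into account Remark~\ref{rm:savinglobal}, and you have faithfully carried this out. The two key simplifications you identify---that the obstacle is irrelevant since $D_r(\bar x')\subset\{x_1>0\}$, and that the limit problem is a constant-coefficient elliptic equation on a full disk (giving $\lambda^2$ decay and hence any $\tau<1$) rather than a Signorini problem---are precisely what distinguishes this interior lemma from Lemma~\ref{itera}.
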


 Next Lemma deals with the case when  \(\partial_{\partial_{H}}E\) fully coincides with the obstacle and it is a simple application of the boundary regularity result. This would be needed for the situations where Lemma \ref{lm:barrier} applies.
\begin{lemma}\label{iteraostac}
For all \(\tau\in (0,1)\), $\kappa>0$,  there exist constants \(\lambda_2=\lambda_2(\tau, \kappa)>0\), \(C_4=C_4(\kappa)>0\) such that for all \(\lambda\in (0, \lambda_2)\) one can find  \( \eps_5=\eps_5(\tau, \kappa, \lambda)>0\),   with the following property:  Assume that $E\subset H$ is a $(\Lambda,1)$-minimizer of $\Fc_\sigma$ in $\R^N$ with obstacle $O\in\mathcal B_R$. Assume also that 
$\bar x\in\partial H\cap\overline{\partial E\cap H}$ and
\[
(\partial E-\bar x) \cap \mathcal C_r^+\subset \{ \abs{x_N-\alpha x_1}< \eps r\big\}, \quad  \{x_N<\alpha x_1-\eps r\}\cap\mathcal C_{r}^+\subset E-\bar x\,,
\]
for some \(\eps \le \eps_5\), 
 $0<r\leq 1$ with $\Lambda r<\eps $ and  $\frac{r}{R}<\eta(1)\eps$  (where $\eta(1)$ is the constant provided in Lemma~\ref{apparte}),  and for some $ \alpha \in[-\kappa,\kappa]$. Finally assume that 
$$
\partial H\cap\overline{(\partial E-\bar x)\cap H}\cap\mathcal C_r= \partial_{\partial H}(O-\bar x)\cap\mathcal C_r\cap\{-\eps r<x_N<\eps r\}\,.
$$
Then  there exist \(\bar \alpha\geq\frac{\sigma}{\sqrt{1-\sigma^2}} \), a rotation \(R\) about the \(x_1\) axis,  with  \(\|R-\Id\|\le C_4\eps\),  \(|\bar \alpha-\alpha|\le C_4 \eps\),  such that 
\[
 R( \partial E-\bar x) \cap\mathcal C_{\lambda r}^+\subset \{ \abs{x_N-\bar\alpha x_1}< \lambda^{1+\tau} \eps r\big\}.
\]
\end{lemma}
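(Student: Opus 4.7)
The plan is to argue by contradiction through a blow-up and compactness procedure, following the structure of the proof of Lemma~\ref{itera}. The decisive simplification is that the standing hypothesis forces the contact line $\pa H\cap\overline{(\pa E-\bar x)\cap H}$ to coincide with the thin obstacle $\pa_{\pa H}(O-\bar x)$ throughout the strip $\mathcal C_r\cap\{|x_N|<\eps r\}$; in the blow-up this will collapse the Signorini-type boundary problem encountered in Lemma~\ref{itera} into a pure Dirichlet problem on the flat half-space $\pa H$, for which classical boundary Schauder estimates yield $C^{1,1}$ regularity and hence the desired quadratic decay of the second-order Taylor remainder. This also explains why the conclusion holds for arbitrary $\alpha\in[-\kappa,\kappa]$, with $C_4$ depending only on $\kappa$.

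After translating and rescaling to $\bar x=0$, $r=1$, I argue by contradiction: there exist $\eps_n\to 0$, $(\Lambda_n,1)$-minimizers $E_n$ with obstacles $O_n\in\mathcal B_{R_n}$ and slopes $\alpha_n\in[-\kappa,\kappa]$ (with $\Lambda_n<\eps_n$ and $1/R_n<\eta(1)\eps_n$) for which no admissible pair $(\bar\alpha_n,R_n)$ witnesses the desired decay at scale $\lambda$. Lemma~\ref{apparte} represents the relevant connected component of $\pa_{\pa H}O_n\cap\mathcal C_1$ as the $C^{1,1}$-graph of a function $\psi_n$ with $\|\nabla\psi_n\|_\infty\le\eps_n$ and $\|\nabla^2\psi_n\|_\infty=o(\eps_n)$, and the coincidence hypothesis forces the profile functions $u_n^\pm$ to take the common trace $\psi_n$ on $D_1\cap\pa H$. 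I then rescale $v_n^\pm:=(u_n^\pm-\alpha_n x_1)/\eps_n$ and iterate the partial Harnack inequality of Lemma~\ref{lm:savinglobal} together with its interior counterpart (Remark~\ref{rm:savinglobal}) and Lemma~\ref{covid}, exactly as in the proof of Lemma~\ref{itera}, to obtain uniform boundedness and H\"older equicontinuity of $v_n^\pm$ on $\overline{D_{1/2}^+}$. Passing to subsequences, $v_n^\pm\to v$ uniformly and $\psi_n/\eps_n\to\psi_\infty+\omega\cdot x'$ on $D_{1/2}\cap\pa H$ for some $\psi_\infty\in\R$ and $\omega\in\R^{N-1}$.

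By the viscosity inequalities of Lemma~\ref{pace} and the stability of viscosity sub- and supersolutions, the limit $v$ solves the linearized minimal surface equation
\[
Lw:=\Delta w\,(1+|\xi_0|^2)-D_{11}w\,|\xi_0|^2=0\qquad\text{in }D_{1/2}^+,
\]
together with the pure Dirichlet datum $v=\psi_\infty+\omega\cdot x'$ on $D_{1/2}\cap\pa H$, modulo a uniformly bounded forcing generated by $\Lambda_n/\eps_n\in[0,1)$, which does not enter the relevant bound. Standard boundary Schauder estimates then give $v\in C^{1,1}(\overline{D_{1/4}^+})$ with a bound depending only on $\kappa$, so
\[
\sup_{D_\lambda^+}\bigl|v(x')-v(0)-\nabla v(0)\cdot x'\bigr|\le C(\kappa)\,\lambda^2<\tfrac14\lambda^{1+\tau}
\]
for all $\lambda\le\lambda_2(\tau,\kappa)$. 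Setting $\bar\alpha_n:=\alpha_n+\eps_n\bigl(\pa_1 v(0)+\lambda^{1+\tau}/4\bigr)$ and letting $R_n$ be the rotation about the $x_1$-axis that aligns $e_N-\eps_n\bigl(\nabla v(0)-\pa_1 v(0)\,e_1\bigr)$ with $e_N$, one obtains $R_n(\pa E_n)\cap\mathcal C_\lambda^+\subset\{|x_N-\bar\alpha_n x_1|<\lambda^{1+\tau}\eps_n\}$, contradicting the assumed failure. The constraint $\bar\alpha_n\ge\sigma/\sqrt{1-\sigma^2}$ follows by passing the boundary inequality $\pa_1 u_n^+/\sqrt{1+|\nabla u_n^+|^2}\ge\sigma$ of Lemma~\ref{pace} to the limit, combined with the $+\lambda^{1+\tau}/4$ correction in $\bar\alpha_n$, exactly as in the proof of Lemma~\ref{itera}.

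I expect the main technical point to be the rigorous derivation of the pure Dirichlet condition in the limit: one must show that, thanks to the coincidence hypothesis throughout the strip, both $v_n^+$ and $v_n^-$ attain the common trace $\psi_n/\eps_n$ on $D_{1/2}\cap\pa H$ uniformly, so that the Signorini dichotomy of Lemma~\ref{itera} is ruled out entirely. A secondary subtlety is that the present hypothesis $\Lambda r<\eps$ is weaker than the $\Lambda r<\eta_2\eps$ of Lemma~\ref{itera}, so the rescaled forcing $\Lambda_n/\eps_n$ need not vanish; since the Schauder bound depends only on the $L^\infty$ norm of the right-hand side, however, the argument is not affected.
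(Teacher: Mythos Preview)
Your blow-up/linearization strategy is natural, but it relies on a step that does not go through as written: the uniform H\"older equicontinuity of $v_n^\pm=(u_n^\pm-\alpha_n x_1)/\eps_n$ on $\overline{D_{1/2}^+}$. You invoke Lemma~\ref{lm:savinglobal} ``exactly as in the proof of Lemma~\ref{itera}'', but that lemma (and Lemma~\ref{lm:savin} beneath it) is stated and proved only for strips $S_{a,b}$ of the \emph{Young-law slope} $\sigma/\sqrt{1-\sigma^2}$. In Lemma~\ref{itera} this is harmless because the hypothesis $\frac{\sigma}{\sqrt{1-\sigma^2}}\le\alpha_n\le\frac{\sigma}{\sqrt{1-\sigma^2}}+M\eps_n$ lets one enclose the $\alpha_n$-strip in a $\sigma$-strip of width $2(1+M)\eps_n$; this is exactly how \eqref{itera0} is obtained there. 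In Lemma~\ref{iteraostac}, however, $\alpha\in[-\kappa,\kappa]$ is arbitrary, so the $\alpha$-strip of width $\eps$ sits inside a $\sigma$-strip of width $O(1)$, and the boundary partial Harnack gives nothing. The interior version (Remark~\ref{rm:savinglobal}) is available for general $\alpha$, but only where $D_{2r}(x')\subset\{x_1>0\}$; you still need a boundary oscillation estimate, and the Dirichlet trace $u_n^\pm=\psi_n$ alone does not supply it without a separate barrier or reflection argument that you have not given. A secondary issue is your derivation of $\bar\alpha_n\ge\sigma/\sqrt{1-\sigma^2}$: the argument in Lemma~\ref{itera} uses that $\alpha_n$ is already $O(\eps_n)$-close to $\sigma/\sqrt{1-\sigma^2}$, which is not the case here.

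The paper sidesteps the boundary-Harnack difficulty entirely by taking a more direct route. A soft compactness argument first shows that the cylindrical excess $\mathcal E(E;r/2)$ is as small as desired once $\eps\le\eps_5$; then the boundary $\eps$-regularity theorem of Duzaar--Steffen \cite[Theorem~6.1]{DuzaarSteffen02} gives that $\partial E\cap\mathcal C^+_{r/4}$ is a $C^{1,\gamma}$ graph $f$ over $D_{r/4}^+$ with the prescribed Dirichlet trace $f=\psi$. At this point standard Schauder estimates for the mean curvature equation with bounded right-hand side and $C^{1,1}$ Dirichlet data yield $\|f-\alpha x_1\|_{C^{1,s}}\le C_{s,\kappa}\eps$ for any $s<1$, from which the flatness improvement and the bound $\bar\alpha\ge\sigma/\sqrt{1-\sigma^2}$ (after replacing $\tilde\alpha=\partial_1 f(0)$ by $\max\{\tilde\alpha,\sigma/\sqrt{1-\sigma^2}\}$ and absorbing an $O(\eps^2)$ error) follow by Taylor expansion. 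Your approach could likely be completed by proving a boundary partial Harnack for general slopes under the Dirichlet constraint, but as written it has a genuine gap at the compactness step.
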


\begin{proof}
By rescaling and translating we may assume $r=1$ and $\bar x=0$. Denote by $\mathcal E$ the cylindrical excess
$$
\mathcal E(E;r)=\frac{1}{r^{N-1}}\int_{\partial E\cap \mathcal C_r^+}|\nu_E(x)-\nu_\alpha|^2\,d\H^{N-1}(x)\,,
$$
where $\nu_\alpha$ is the normal to the hyperplane $\{x_N-\alpha x_1=0\}$ pointing upward. Fix $\delta\in(0,1)$. We claim that there exists $\eps_5$ such that if the assumptions are satisfied  for $r=1$ then
\begin{equation}\label{iteraostac1}
\mathcal E\Big(E;\frac12\Big)\leq\delta\,.
\end{equation}
To prove this claim we observe that if $E_n$ is a sequence of $(\Lambda_n,1)$-minimizers satisfying the assumptions with $\eps=\eps_n\to0$, then by Theorem~\ref{th:compactness} we have that, up to subsequence, $|(E_n\Delta F)\cap \mathcal C^+_{\frac12}|\to0$, $|\mu_{E_n}|\res \mathcal C^+_{\frac12}\wtos |\mu_F|\res \mathcal C^+_{\frac12}$, where  $F=\{x_N-\alpha x_1\leq0\}$. Thus, 
$$
\mathcal E\Big(E_n;\frac12\Big)\to \mathcal E\Big(F;\frac12\Big)=0\,.
$$
Hence \eqref{iteraostac1} follows by a compactness argument. 

We recall that, thanks to Lemma~\ref{apparte},  there exists $\eta(1)>0$ such that if $\frac{1}{R}<\eta(1)\eps$,  $\partial_{\partial H}O\cap \mathcal C_1$ is the graph of a function $\psi\in C^{1,1}(D_1)$ such that
$$
\|\nabla\psi\|_{L^\infty(\partial H\cap D_1)}\leq2\eps,\quad \|D^2\psi\|_{L^\infty(\partial H\cap D_1)}\leq2\eps\,.
$$
Observe that,  there exists $\mu(\kappa)\in(0,1)$ such that if $\Sigma\subset\mathcal C^+_{\frac14}$ can be described as a graph over $\{x_N-\alpha x_1=0\}$ of a $C^1$ function $g$ such that $\|\nabla g\|_{\infty}\leq\mu(\kappa)$, then
$\Sigma$ can be also written as a graph over $D^+_{\frac14}$ of a $C^1$ function $f$ with $\|\nabla f\|_{\infty}\leq2\kappa$.

By   \cite[Theorem~6.1]{DuzaarSteffen02}\footnote{Theorem 6.1 in \cite{DuzaarSteffen02} is stated and proved for almost minimizing currents. However it is well known to the experts that the methods of the proof extend without significant changes also to the framework of almost minimizing sets of finite perimeter.}  we have that there exists $\bar\delta=\bar\delta(\kappa)$ such that if 
$$
\mathcal E\Big(E;\frac12\Big)+\Lambda+ \|D^2\psi\|_\infty \leq \bar\delta\,,
$$
then $\partial E\cap \mathcal C^+_{\frac14}$ is a graph with respect to the hyperplane $\{x_N-\alpha x_1=0\}$ of a  $C^{1,\gamma}$ function $g$ with $\|\nabla g\|_{C^{0,\gamma}}\leq\mu(\kappa)$, for some universal $\gamma>0$.
In particular $\partial E\cap \mathcal C^+_{\frac14}$ is the graph over $D^+_{\frac14}$ of a $C^{1,\gamma}$ function $f$ with 
$\|\nabla f\|_{C^{0,\gamma}}\leq C(\kappa)$. Then, observing that the function $f(x')-\alpha x_1$ is a solution of
$$
\Div \biggl(\frac{\nabla w+\alpha e_1}{\sqrt{1+|\nabla w+\alpha e_1|^2}}\biggr) =h\,
$$
with $|h|\leq\Lambda$, 
standard regularity estimates for solutions of the mean curvature equation imply that for all $s\in(0,1)$ there exists a constant $C_{s,\kappa}$, depending only on $s$ and $\kappa$ such that
\begin{equation}\label{iteraostac2}
\|f-\alpha x_1\|_{C^{1,s}(D^+_{\frac18})}\leq C_{s,\kappa}\big(\|f-\alpha x_1\|_{L^\infty(D^+_{\frac14})}+\|\psi\|_{C^{1,s}(D^+_{\frac14})}\big)\leq C'_{s,\kappa}\eps\,,
\end{equation}
where the last inequality follows from the fact that $\|\psi\|_{C^{1,1}(D^+_{\frac14})}\leq C(\|D^2\psi\|_{L^\infty(D^+_{\frac14})}+\|\psi\|_{L^\infty(D^+_{\frac14})})$ for a universal constant $C$. 
Let us fix $\tau\in(0,1)$ and take $s=(1+\tau)/2$. From the previous estimate we have that  for all \(\lambda < 1/8\), since $f(0)=0$,
\[
\sup_{D^+_{\lambda}} \abs[\big]{f(x')-\nabla f(0) \cdot x'}\le C'_{s,\kappa} \lambda^{1+s}\eps<\frac14\lambda^{1+\tau}\eps\,,
\]
provided that $\lambda<\lambda_2(\tau,\kappa)$.
We take 
\[
 \tilde \alpha=\partial_1f(0)
 \]
and
\[
 \boldsymbol v=\nabla f (0)-\partial_1 f(0) e_1=\nabla\psi(0)\,,
\]
where the last equality follows from the fact that by assumption $f=\psi$ on  $\partial H\cap\mathcal  C_1$. Thus
 $R(\partial E)\cap \mathcal C_\lambda^+$ is contained in the strip 
$$
S:=\{|x_N-\tilde\alpha x_1|<\frac12\lambda^{1+\tau}\eps\}\,,
$$
where  \(R\) is the rotation around  \(x_1\) which maps the vector
\[
\frac{e_N-\boldsymbol v}{\abs{e_N-\boldsymbol v}}
\]
in $e_N$, provided $\e$, hence $\e_5$, is sufficiently small, depending on $\lambda$. Note that, recalling that $|\boldsymbol v|=|\nabla\psi(0)|\leq C\eps$, the choice of $\tilde\alpha$ and \eqref{iteraostac2}, we have
$$
\|R-\Id\|\le C_4\eps,  \quad |\tilde\alpha-\alpha|\le C_4 \eps\,,
$$
for a sufficiently large $C_4$ depending only on $\kappa$.

From the fact that $E$ is a  $(\Lambda,1)$-minimizer of $\Fc_\sigma$ in $\mathcal C^+_{\frac14}$  of class $C
^{1,\tau}$ up to the boundary via a standard first variation argument, see also Lemma~\ref{pace}, we get that
$$
\frac{\partial_1f(0)}{\sqrt{1+|\nabla f(0)|^2}}\geq \sigma\,.
$$
From this inequality we get
$$
\tilde\alpha=\partial_1f(0)\geq \frac{\sigma}{\sqrt{1-\sigma^2}}-\frac{\sigma^-}{\sqrt{1-\sigma^2}}\frac{|\nabla\psi(0)|^2}{2}\,,
$$
since $|\nabla f(0)-\tilde\alpha e_1|=|\nabla\psi(0)|\leq2\eps$.
From the last inequality, setting $\bar\alpha=\max\big\{\tilde\alpha,\frac{\sigma}{\sqrt{1-\sigma^2}}\big\}$, we finally get
$$
R(\partial E)\cap\mathcal C_{\lambda}^+\subset S\cap\mathcal C_{\lambda}^+\subset\Big\{|x_N-\bar\alpha x_1|<\frac12\lambda^{1+\tau}\eps+\frac{2\sigma^-\eps^2}{\sqrt{1-\sigma^2}}\Big\}\subset\{|x_N-\bar\alpha x_1|<\lambda^{1+\tau}\eps\}\,,
$$
provided $\eps_5$ is chosen sufficiently small.
\end{proof}


We can now prove the following

\begin{lemma}\label{lm:epsilon} Let $\tau\in(0,1/2)$. There exist $\bar\lambda=\bar\lambda(\tau)\in(0,1/2)$ and $\bar C=\bar C(\tau)$ such that for all $\lambda\in(0,\bar\lambda)$ it is possible to find  \( \bar\eps=\bar\eps(\lambda,\tau)\in(0,\frac12)\), $\bar\eta=\bar\eta(\lambda,\tau)\in(0,\frac12)$ with the following property:
Assume  $E\subset H$ is a $(\Lambda,1)$-minimizer of $\Fc_\sigma$ in $\R^N$ with obstacle $O\in\mathcal B_R$  and let $y\in \overline{\partial E\cap H}$ be such that
 $$(\overline{\partial E\cap H}-y)\cap \mathcal C_\varrho\subset S_{-\eps\varrho,\eps\varrho}^\alpha,
 \quad \quad  \{x_N<\alpha x_1-\eps r\}\cap\mathcal C_{\varrho}\subset E-y\,,
$$
 for some $0<\varrho\leq1$, $0<\eps\leq\bar\eps$, $\alpha\in[\frac{\sigma}{\sqrt{1-\sigma^2}}-1,\frac{\sigma}{\sqrt{1-\sigma^2}}+1]$ (and $\alpha\geq\frac{\sigma}{\sqrt{1-\sigma^2}}$ if $0\leq y_1<\frac{\lambda\varrho}{16}$),
with $\Lambda\varrho<\bar\eta\eps $,  $\frac{\varrho}{R}\leq\bar\eta\e$. Then there exist a  sequence of rotations $R_k=R_k(y)$, $R_0=I$, a sequence $\alpha_k=\alpha_k(y)\in\R$, $\alpha_0=\alpha$, such that, setting $\varrho_k=\frac{\lambda^{k+1}\varrho}{16}$,
\begin{equation}\label{epsilon1}
\|R_{k+1}-R_k\|\leq \frac{\bar C}{\lambda}\lambda^{k\tau}\eps,\qquad |\alpha_{k+1}-\alpha_k|\leq \frac{\bar C}{\lambda}\lambda^{k\tau}\eps
\end{equation} 
and 
\begin{equation}\label{epsilon2}
R_{k}(\overline{\partial E\cap H}-y) \cap\mathcal C_{\varrho_k}\subset \{ \abs{x_N-\alpha_k x_1}< \lambda^{k(1+\tau)}\varrho\eps \big\}.
\end{equation} 
Moreover $\alpha_k(y)\geq\frac{\sigma}{\sqrt{1-\sigma^2}}$ whenever $0\leq y_1<\frac{\lambda^k\varrho}{16}$.
\end{lemma}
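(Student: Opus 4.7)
We proceed by induction on $k$; the base case $k=0$ follows from the hypothesis with $R_0=\Id$, $\alpha_0=\alpha$, since $\mathcal C_{\varrho_0}\subset \mathcal C_\varrho$. For the inductive step, rewrite \eqref{epsilon2} at step $k$ in the scale-invariant form
\[
R_k(\overline{\pa E\cap H}-y)\cap \mathcal C_{\varrho_k}\subset \{|x_N-\alpha_k x_1|<\tilde\eps_k\,\varrho_k\},\qquad \tilde\eps_k:=16\lambda^{k\tau-1}\eps,
\]
so that the three iteration lemmas may be applied at scale $r=\varrho_k$ with flatness parameter $\tilde\eps_k$. Choosing $\bar\eps,\bar\eta$ small depending on $\lambda$ and $\tau$, the smallness conditions $\tilde\eps_k\le\min\{\eps_2,\eps_3,\eps_4,\eps_5\}$, $\Lambda\varrho_k\le\eta\tilde\eps_k$, and $\varrho_k/R\le\eta\tilde\eps_k$ all hold uniformly in $k$: indeed $\tilde\eps_k\le 16\eps/\lambda$ while both $\Lambda\varrho_k$ and $\varrho_k/R$ decay as $\lambda^k$.

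\textbf{Case split at step $k$.} Set $T_k:=\lambda^k\varrho/16=\varrho_k/\lambda$. \emph{(a) Interior regime $y_1\ge T_k$:} then $D_{\varrho_k}(y')\subset\{x_1>0\}$, so Lemma~\ref{iteraintern} at $y$, with $\kappa$ a universal bound on $|\alpha_j|$ (validated below), produces $R_{k+1},\alpha_{k+1}$ satisfying the bounds \eqref{epsilon1} and the required flatness on $\mathcal C_{\varrho_{k+1}}$. \emph{(b) Near-boundary flat slope, $y_1<T_k$ and $\alpha_k\le\tfrac{\sigma}{\sqrt{1-\sigma^2}}+M_0\tilde\eps_k$ (with $M_0$ from Lemma~\ref{lm:barrier}):} the inductive invariant gives $\alpha_k\in[\tfrac{\sigma}{\sqrt{1-\sigma^2}},\tfrac{\sigma}{\sqrt{1-\sigma^2}}+M_0\tilde\eps_k]$, so Lemma~\ref{itera} (with $M=M_0+1$) applies and outputs $\alpha_{k+1}\ge\tfrac{\sigma}{\sqrt{1-\sigma^2}}$. \emph{(c) Near-boundary steep slope, $y_1<T_k$ and $\alpha_k>\tfrac{\sigma}{\sqrt{1-\sigma^2}}+M_0\tilde\eps_k$:} the barrier Lemma~\ref{lm:barrier} applies (its slope hypothesis matching precisely the steep-slope assumption), forcing $\overline{\pa E\cap H}\cap\pa H\cap \mathcal C_{\varrho_k/4}$ to coincide with $\pa_{\pa H}O\cap\{|x_N|<\tilde\eps_k\varrho_k\}$ in the rotated/translated frame; Lemma~\ref{iteraostac} at the matched scale $\varrho_k/4$ then produces $R_{k+1},\alpha_{k+1}$ with $\alpha_{k+1}\ge\tfrac{\sigma}{\sqrt{1-\sigma^2}}$, where the factor $1/4$ shrinkage is absorbed by imposing $\bar\lambda$ small enough in terms of $\tau$ and the universal constants.

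\textbf{Consistency and the main obstacle.} In all three cases $\|R_{k+1}-R_k\|+|\alpha_{k+1}-\alpha_k|\le \bar C\tilde\eps_k$ with $\bar C:=16\max\{C_2,C_3,C_4\}$, which is precisely \eqref{epsilon1}. Telescoping yields $|\alpha_k-\alpha_0|+\|R_k-\Id\|\le 2\bar C\eps/(\lambda(1-\lambda^\tau))\le \tfrac12$ for $\bar\eps$ sufficiently small; this validates the uniform $\kappa$-bound on $|\alpha_j|$ used in (a) and (c), and keeps $\alpha_k$ in $[\tfrac{\sigma}{\sqrt{1-\sigma^2}}-1,\tfrac{\sigma}{\sqrt{1-\sigma^2}}+1]$. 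The invariant $\alpha_k\ge\tfrac{\sigma}{\sqrt{1-\sigma^2}}$ whenever $y_1<T_k$ propagates because $T_{k+1}<T_k$: if $y_1<T_{k+1}$ then also $y_1<T_k$, so step $k$ was in case (b) or (c), each of which delivered the correct sign for $\alpha_{k+1}$; conversely a transition from interior at step $k$ to boundary at step $k+1$ is impossible since $y_1\ge T_k$ implies $y_1\ge T_{k+1}$. The main obstacle is case (c): one must carefully propagate the one-sided inclusion $\{x_N<\alpha_k x_1-\tilde\eps_k\varrho_k\}\cap \mathcal C_{\varrho_k}^+\subset R_k(E-y)$ from step $k-1$ through the rotation $R_k$ in order to verify the barrier hypotheses, and match the coincidence scale $\mathcal C_{\varrho_k/4}$ output by Lemma~\ref{lm:barrier} with the input scale required by Lemma~\ref{iteraostac}, a matching which ultimately determines the admissible range of $\bar\lambda$.
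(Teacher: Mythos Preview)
Your inductive scheme is natural, and for $y_1=0$ it reproduces essentially Step~1 of the paper. However, cases (b) and (c) contain a genuine gap when $0<y_1<T_k$. Lemmas~\ref{itera}, \ref{lm:barrier}, and \ref{iteraostac} are all stated for a minimizer $E\subset H=\{x_1>0\}$ with the center of the cylinder on $\partial H$ (explicit in Lemma~\ref{iteraostac}, encoded by the convention $0\in\partial H$ in the other two; note that their proofs make essential use of the boundary Signorini/Neumann structure at $\{x_1=0\}$). If $0<y_1<T_k$, the cylinder $\mathcal C_{\varrho_k}$ in the frame centered at $y$ sits inside the shifted half-space $\{x_1>-y_1\}$ with center strictly interior, so none of these lemmas applies. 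Consequently the outputs $\alpha_{k+1}\ge\tfrac{\sigma}{\sqrt{1-\sigma^2}}$ in (b) and (c), and hence the propagation of the boundary invariant, are unjustified for $0<y_1<T_k$.

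The paper's proof handles this range by a projection step: for $0<y_1<\tfrac{\lambda\varrho}{16}$ one passes to the point $\bar y=(0,y_2,\dots,y_{N-1},y_N-\alpha y_1)\in\partial H$, transfers the initial flatness at $y$ to a slightly smaller cylinder centered at $\bar y$, and runs the boundary dichotomy (your (b)/(c), which is Step~1 of the paper) \emph{at $\bar y$} for $k=0,\dots,\hat k$, where $\hat k$ is the last integer with $y_1<\lambda^{\hat k}\varrho/16$. At step $\hat k$ the scale has shrunk to the order of $y_1$, so the flatness at $\bar y$ can be transferred back to a cylinder centered at $y$ that is now fully contained in $H$, and one continues with Lemma~\ref{iteraintern}. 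Your scheme can be repaired along these lines, but the projection to $\partial H$ and the two transfers of the flatness strip are the missing ingredients; they do not follow from a direct induction centered at $y$.
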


\begin{proof} By rescaling we may assume $\varrho=1$. We fix $\tau\in(0,1/2)$. 

{\bf Step 1.} We first assume that $y\in\partial H$ and that $\alpha\leq\frac{\sigma}{\sqrt{1-\sigma^2}}+M_0\eps$, where $M_0$ is the constant in Lemma~\ref{lm:barrier}, $\eps\leq\bar\eps$. Thus we may apply Lemma~\ref{itera} taking $\lambda\leq\lambda_0(M_0,\tau)$ assuming that $\bar\eps\leq\eps_3$ and $\bar\eta\leq\eta_2$. We recall that both these constants depend on $M_0,\tau$ and $\lambda$. Then we get a rotation $R_1$ and $\alpha_1\geq\frac{\sigma}{\sqrt{1-\sigma^2}}$ satisfying 
$$
\|R_{1}-I\|\leq C_2\eps,\qquad |\alpha_{1}-\alpha|\leq C_2\eps\,,
$$
\[
R_1( \overline{\partial E\cap H}-y) \cap\mathcal C_{\lambda}\subset \{ \abs{x_N-\alpha_1 x_1}< \lambda^{1+\tau}\eps\big\}.
\]
 If $\alpha_1\leq
\frac{\sigma}{\sqrt{1-\sigma^2}}+M_0\lambda^\tau\eps$, we can apply Lemma~\ref{itera} again, with $r=\lambda$ and $\eps$ replaced by $\lambda^\tau\eps$,  to get a rotation $\tilde R$ such that $\|\tilde R-I\|\leq C_2\lambda^\tau\eps$ and $\alpha_2\geq\frac{\sigma}{\sqrt{1-\sigma^2}}$ such that, setting $R_2=\tilde R\circ R_1$, $\|R_{2}-R_1\|\leq C_2\lambda^{\tau}\eps$,  $|\alpha_{2}-\alpha_1|\leq C_2\lambda^{\tau}\eps$ and
\[
R_2( \overline{\partial E\cap H}-y) \cap\mathcal C_{\lambda^2}\subset \{ \abs{x_N-\alpha_2 x_1}<\lambda^{2(1+\tau)} \eps\big\}.
\]
We may now iterate this procedure and get a sequence of rotations $R_k$ and a sequence $\alpha_k\geq\frac{\sigma}{\sqrt{1-\sigma^2}}$ such that
$$
\|R_{k+1}-R_{k}\|\leq C_2\lambda^{k\tau}\eps,\qquad |\alpha_{k+1}-\alpha_{k}|\leq C_2\lambda^{k\tau}\eps
$$
 and 
 $$
R_{k+1}( \overline{\partial E\cap H}-y) \cap\mathcal C_{\lambda^{k+1}}\subset \{ \abs{x_N-\alpha_{k+1} x_1}< \lambda^{(k+1)(1+\tau)}\eps\big\}.
 $$
  hold as long as $\alpha_{k}\leq\frac{\sigma}{\sqrt{1-\sigma^2}}+M_0\lambda^{k\tau}\eps$. If the latter inequality is satisfied for every $k$, we get the conclusion with $\frac{\bar C}{\lambda}$ replaced by  $C_2$ and with $\varrho_{k}$  replaced by $\lambda^{k+1}$ and thus also by  $\lambda^{k+1}/16$.

Otherwise let $\bar k\in\mathbb N\cup\{0\}$ be the first integer such that $\alpha_{\bar k}>\frac{\sigma}{\sqrt{1-\sigma^2}}+M_0\lambda^{\bar k\tau}\eps$. In this case assuming that $\bar\eps\leq\eps_2$, Lemma~\ref{lm:barrier} yields that
\[
R_{\bar k}( \overline{\partial E\cap H}-y) \cap \partial H\cap\mathcal C_{\frac{\lambda^{\bar k}}{4}}=\partial_{\partial H}O\cap\mathcal C_{\frac{\lambda^{\bar k}}{4}}\cap\{|x_N|<\lambda^{\bar k(1+\tau)}\e\}\,,
\]
provided that we also enforce $\bar\eta\leq\eta(1)$.
Observe that from the previous iteration argument we know in particular that
 $$
R_{\bar k}( \overline{\partial E\cap H}-y) \cap\mathcal C_{\frac{\lambda^{\bar k}}{4}}\subset \{ \abs{x_N-\alpha_{\bar k} x_1}< \lambda^{{\bar k}(1+\tau)}\eps \big\}.
 $$
 We may now use Lemma~\ref{iteraostac} with $\tilde\eps=4\lambda^{\bar k \tau}\eps$ and $r=\lambda^{\bar k}/4$, provided that $4\bar\eps\leq \eps_5$ and that we have chosen from the beginning $\lambda\leq\lambda_2(\tau,\kappa)$, where $\kappa:=\frac{|\sigma|}{\sqrt{1-\sigma^2}}+2$. Indeed, since $\alpha\leq\frac{|\sigma|}{\sqrt{1-\sigma^2}}+1$, we have that $\alpha_{\bar k}\in[-\kappa,\kappa]$, since
\begin{equation}\label{epsilon3}
| \alpha_{\bar k}-\alpha|\leq C_2\bar\eps\sum_{n=0}^\infty\lambda^{\tau n}<1
\end{equation}
by taking $\bar\eps$ smaller if needed. So we get  there exist  a rotation \(R_{\bar k+1}\)   with  \(\|R_{\bar k+1}-R_{\bar k}\|\le 4C_4\lambda^{\bar k \tau}\eps\),  \(|\alpha_{\bar k+1}-\alpha_{\bar k}|\le 4C_4\lambda^{\bar k \tau}\eps\), $\alpha_{\bar k+1}\geq\frac{\sigma}{\sqrt{1-\sigma^2}}$, such that 
\[
R_{\bar k+1}( \overline{\partial E\cap H}-y) \cap\mathcal C_{\frac{\lambda^{\bar k+1}}{4}}^+\subset \{ \abs{x_N-\bar\alpha x_1}<\lambda^{(\bar k+1)(1+\tau)} \eps \big\}.
\]
At this point we keep iterating the previous argument by applying Lemma~\ref{iteraostac} to get for all $k>\bar k$ a sequence $R_k$ and a sequence $\alpha_k\geq\frac{\sigma}{\sqrt{1-\sigma^2}}$ satisfying \eqref{epsilon2}  (even with  $\varrho_k$ replaced by $\lambda^k/4$) and \eqref{epsilon1} with $\frac{\bar C}{\lambda}$ replaced by $4C_4$. Note that, arguing as for \eqref{epsilon3},  we may ensure that during this iteration process $\alpha_k\in[-\kappa,\kappa]$ provided that we choose $\bar\eps$ smaller if needed, depending on $\lambda$ and $\kappa$.

{\bf Step 2.} Let us now assume that $y\in\partial E\cap H$. If $y_1\geq\frac{\lambda}{16}$, since  by assumption we have
\[
\partial (E-y) \cap\mathcal C_{\frac{\lambda}{16}}\subset \{ \abs{x_N-\alpha x_1}<\eps\big\}\,,
\]
we may apply iteratively Lemma~\ref{iteraintern} choosing $\lambda<\lambda_1(\tau,\kappa)$, with $\kappa$ as above, taking $\bar\eps\leq\frac{\lambda}{16}\eps_4$ and $\bar\eta\leq\eta_3$, where we recall that the constants $\eta_3, \eps_4$ depend on $\tau,\kappa$ and $\lambda$. In this way we get the conclusion  $\varrho_k=\frac{\lambda^{k+1}}{16}$ and with a sequence of rotations $R_k$, and a sequence $\alpha_k$ such that \(\|R_{k+1}-R_k\|\le \frac{16}{\lambda}C_3\lambda^{k\tau}\eps\),  \(|\alpha_{k+1}-\alpha_k|\le \frac{16}{\lambda}C_3\lambda^{k\tau}\eps\).
Note that, arguing as above, choosing $\bar\eps$ smaller if needed, we may ensure that all the $\alpha_k$ remain in the interval $[-\kappa,\kappa]$.

{\bf Step 3.} If $y_1<\frac{\lambda}{16}$, we denote by $\hat k$ the last integer such that $y_1<\frac{\lambda^{\hat k}}{16}$. We denote by $\bar y$ the point $\bar y=(0,y_2\,\dots,y_{N-1},y_N-\alpha y_1)$. Observe that by assumption this point satisfies
$$
( \overline{\partial E\cap H}-\bar y)\cap \mathcal C_{\frac34}\subset S_{-\eps,\eps}^\alpha\,.
$$
Hence from Step 1 we have that for all $k=0,1,\dots,\hat k$ there exist a radius $r_k\in\{\frac{3}{16}\lambda^k,\frac34\lambda^k\}$, a rotation $R_k$ and a number $\alpha_k$, 
such that
$$
\|R_{k+1}-R_k\|\leq \frac43\bar C\lambda^{k\tau}\eps,\qquad |\alpha_{k+1}-\alpha_k|\leq \frac43\bar C\lambda^{k\tau}\eps,
$$
with $\bar C=\max\{C_2,4C_4\}$, 
and 
$$
R_{k}( \overline{\partial E\cap H}-\bar y) \cap\mathcal C_{r_k}\subset \{ \abs{x_N-\alpha_k x_1}< \lambda^{k(1+\tau)} \eps \big\}.
$$
In particular we have that that for $k=1,\dots,\hat k$
$$
R_{k}(\overline{\partial E\cap H}- y) \cap\mathcal C_{\frac{\lambda^k}{8}}\subset \{ \abs{x_N-\alpha_k x_1}< \lambda^{k(1+\tau)}\eps \big\}\,.
$$
In particular we have that
$$
R_{\hat k}( \overline{\partial E\cap H}- y) \cap\mathcal C_{\frac{\lambda^{\hat k+1}}{16}}\subset \{ \abs{x_N-\alpha_{\hat k} x_1}< \lambda^{\hat k(1+\tau)}\eps \big\}\,.
$$
Note that, as already observed in Step1, $\alpha_k\geq\frac{\sigma}{\sqrt{1-\sigma^2}}$ for all $k=1,\dots,\hat k$.
Observing that the cylinder $\mathcal C_{\frac{\lambda^{\hat k+1}}{16}}(y)\subset H$, we may start from this cylinder arguing as in the proof of Step~2 to conclude.
 \end{proof}

\begin{theorem}[\(\eps\)-regularity theorem]\label{th:epsreg}
 There exists    \(\widehat\eps >0\) with the following property. If $E\subset H$ is a $(\Lambda,1)$-minimizer of $\Fc_\sigma$ in $\R^N$ with obstacle $O\in\mathcal B_R$, $\bar x\in\overline{\partial E\cap H}\cap\partial H$ such that 
\[
(\overline{\partial E\cap H}-\bar x) \cap \mathcal C_r\subset S_{-\widehat\eps r,\widehat\eps r}\quad  \big\{x_N<\frac{\sigma x_1}{\sqrt{1-\sigma^2}}-\widehat\eps r\Big\}\cap\mathcal C_{r}^+\subset E-\bar x\,,
\]
where $0<r\leq1$, 
 $\Lambda r<\widehat\eps$,  $\frac{r}{R}\leq\widehat\eps$,
 then $M:=\overline{\partial E\cap H} \cap\mathcal C_{\frac r2}(\bar x')$ is a hypersurface (with boundary) of class $C^{1,\tau}$ for all $\tau\in(0,\frac12)$. Moreover, 
\begin{align*}
\nu_{E}\cdot \nu_{H}\ge \sigma \quad &\text{on} \quad   M\cap \partial H; \\
\nu_{E}\cdot \nu_{H}= \sigma \quad &\text{on} \quad   (M\cap \partial H)\setminus \partial_{\partial H} O,
\end{align*}
 \end{theorem}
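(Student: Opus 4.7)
The plan is to upgrade the iterative flatness improvement of Lemma~\ref{lm:epsilon} from a statement at the single base point $\bar x$ to a uniform estimate at every nearby boundary point, and then extract $C^{1,\tau}$ regularity via a Campanato-type argument. After rescaling I may assume $r=1$. I first fix $\tau\in(0,1/2)$, choose $\lambda\in(0,\bar\lambda(\tau))$, and then fix $\widehat\eps$ so small (depending on the constants $\bar\eps,\bar\eta$ of Lemma~\ref{lm:epsilon}, on $\lambda$ and on $\tau$) that the initial flatness hypothesis of Lemma~\ref{lm:epsilon} is satisfied not only at $\bar x$ with $\alpha=\frac{\sigma}{\sqrt{1-\sigma^2}}$, but, with a mildly thicker strip and at a slightly smaller scale $\varrho_0\le\tfrac12$, at every $y\in U:=\overline{\pa E\cap H}\cap \mathcal C_{r/2}(\bar x')$. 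This propagation uses a simple triangle inequality together with Proposition~\ref{p:densityestimates} to exclude that the boundary could be far from the tilted strip near $y$.

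The first step is to apply Lemma~\ref{lm:epsilon} at every $y\in U$, obtaining sequences $R_k(y)$ and $\alpha_k(y)$ satisfying \eqref{epsilon1} and the flatness containment \eqref{epsilon2} at the geometric scales $\varrho_k=\lambda^{k+1}\varrho_0/16$. Summing the geometric series in \eqref{epsilon1} yields that both sequences are Cauchy with limits $R_\infty(y)$ and $\alpha_\infty(y)$, and quantitative rate
\[
\|R_k(y)-R_\infty(y)\|+|\alpha_k(y)-\alpha_\infty(y)|\le C\lambda^{k\tau}\widehat\eps.
\]
By \eqref{epsilon2}, the tilted hyperplane $\Pi_y:=R_\infty(y)^{-1}\{x_N=\alpha_\infty(y)x_1\}$ is the tangent plane to $M:=\overline{\pa E\cap H}$ at $y$, with the one-sided bound $\dist(z,\Pi_y)\lesssim|z-y|^{1+\tau}\widehat\eps$ for $z\in M$ near $y$.

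The technical heart of the argument is to upgrade the pointwise bounds to H\"older continuity of the assignment $y\mapsto(R_\infty(y),\alpha_\infty(y))$. For $y_1,y_2\in U$ I choose $k$ so that $\varrho_{k+1}<|y_1-y_2|\le\varrho_k$ and compare the $k$-th stage flatness at $y_1$ with an iteration restarted at $y_2$: since $\mathcal C_{c\varrho_k}(y_2)\subset\mathcal C_{\varrho_k}(y_1)$ for a universal $c$, the approximating tilted plane provided by \eqref{epsilon2} at $y_1$ is an admissible initial plane for Lemma~\ref{lm:epsilon} at $y_2$, and matching it with the iteration at $y_2$ gives
\[
\|R_\infty(y_1)-R_\infty(y_2)\|+|\alpha_\infty(y_1)-\alpha_\infty(y_2)|\le C|y_1-y_2|^\tau\widehat\eps.
\]
A delicate point is that $y_1,y_2$ may be both on $\pa H$, both in the interior, or of mixed type; in the mixed case I will invoke the translation device of Step~3 in the proof of Lemma~\ref{lm:epsilon} to align the two iterations across different base heights. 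Once this Campanato estimate is established, a standard graph argument represents $M$ near $\bar x$ as the graph over the asymptotic tangent plane $\Pi_{\bar x}$ of a $C^{1,\tau}$ function (flatness at every scale prevents folding, and the H\"older bound above provides the modulus of continuity of the unit normal).

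The Young-type boundary conditions then come essentially for free: the last sentence of Lemma~\ref{lm:epsilon} guarantees $\alpha_k(y)\ge\frac{\sigma}{\sqrt{1-\sigma^2}}$ whenever $y\in\pa H$, hence $\alpha_\infty(y)\ge\frac{\sigma}{\sqrt{1-\sigma^2}}$, which in the tilted-graph parametrization is exactly $\nu_E\cdot\nu_H\ge\sigma$ on $M\cap\pa H$. For $y\in(M\cap\pa H)\setminus\pa_{\pa H}O$, a sufficiently small neighborhood of $y$ is disjoint from the thin obstacle, so by Lemma~\ref{lm:osta} either $E$ or $H\setminus E$ is an honest $(\Lambda,r_0)$-minimizer of $\mathcal F_{\pm\sigma}$ without obstacle; since $M$ is already known to be a $C^{1,\tau}$ hypersurface with boundary there, the classical first-variation computation supplies the exact Young's law $\nu_E\cdot\nu_H=\sigma$. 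I expect the Campanato step to be the main obstacle, because the rotation sequences $R_k(y)$ produced by the three-regime iteration of Lemma~\ref{lm:epsilon} (interior, thin-obstacle-touching, and thin-obstacle-coinciding) must be compared across base points $y$ lying in possibly different regimes, which requires careful bookkeeping of scales and changes of frame.
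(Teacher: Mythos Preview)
Your Campanato argument is essentially the paper's Step~1: apply Lemma~\ref{lm:epsilon} at each nearby point, pass to the limit to define the tangent plane, and compare iterations at two points $y_1,y_2$ by restarting the iteration at $y_2$ from the $k$-th strip of $y_1$ (with $k$ chosen so that $\varrho_{k+1}<|y_1-y_2|\le\varrho_k$), using uniqueness of the tangent plane to match the limits. The paper also explicitly invokes classical interior regularity to guarantee that the limit plane really is the tangent plane, which is what justifies your ``matching'' step; you should cite this.

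There is, however, a genuine gap. You fix $\tau\in(0,1/2)$ \emph{first} and then choose $\widehat\eps$ depending on $\tau$ (through $\bar\eps(\lambda,\tau)$ and $\bar\eta(\lambda,\tau)$ of Lemma~\ref{lm:epsilon}). But the theorem asserts a \emph{single} $\widehat\eps$ under which $M$ is $C^{1,\tau}$ for \emph{all} $\tau\in(0,1/2)$. Your scheme produces $\widehat\eps(\tau)$ that may degenerate as $\tau\uparrow 1/2$, since the flatness improvement constants in Lemmas~\ref{itera}--\ref{iteraostac} genuinely depend on $\tau$. The paper closes this gap with a bootstrap (its Step~2): once $C^{1,\tau_0}$ is known for one fixed $\tau_0$, at each boundary point $y$ either $\alpha(y)>\frac{\sigma}{\sqrt{1-\sigma^2}}$, in which case Lemma~\ref{lm:barrier} forces the free boundary to coincide locally with $\partial_{\partial H}O$ and \cite{DuzaarSteffen02} gives $C^{1,\gamma}$ for all $\gamma\in(0,1)$; or $\alpha(y)=\frac{\sigma}{\sqrt{1-\sigma^2}}$, in which case the $C^1$ regularity lets you zoom in until the flatness drops below $\widehat\eps(\gamma)$ for any target $\gamma$, and you rerun Step~1. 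Without this dichotomy your argument only yields $C^{1,\tau}$ for a $\tau$ tied to the initial $\widehat\eps$.

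Your treatment of the Young conditions is fine and matches the paper: the inequality comes from the last clause of Lemma~\ref{lm:epsilon} and a limit, and equality away from $\partial_{\partial H}O$ is classical once regularity is known.
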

\begin{proof}
We may assume  $\bar x=0$.

{\bf Step 1.} We claim that given $\tau$ there exists $\hat\eps=\hat\eps(\tau)$ such that if the assumptions are satisfied for such $\hat\eps$ then $\partial E$ is of  class $C^{1,\tau}$ in $\mathcal C^+_{\frac{r}{2}}(\bar x')$ uniformly up to $\partial H$.  

To this aim we may assume without loss of generality that $r=1$.
We fix $\tau\in(0,1/2)$ and let $\bar\lambda$ and $\bar C$ as in Lemma~\ref{lm:epsilon}. Fix $\lambda\in(0,\bar\lambda)$. Let $\bar\eps$ and $\bar\eta$ be the corresponding constants provided once again by Lemma~\ref{lm:epsilon} and set $\hat\eps=\frac{1}{2}\bar\eta\tilde\eps$, with $\tilde\eps\leq\bar\eps$ to be chosen. 

We fix $y\in\partial E\cap H\cap\mathcal C_{\frac12}$, $y=(y',y_N)$ and observe that 
$$
(\overline{\partial E\cap H}-y) \cap \mathcal C_{\frac12}\subset S_{-\frac{\tilde\eps}{2},\frac{\tilde\eps}{2}}
$$
and that $\frac{1}{2}\Lambda<\bar\eta\tilde\eps $,  $\frac{1}{2R}\leq\bar\eta\tilde\eps$. Therefore, from \eqref{epsilon1} and \eqref{epsilon2} we have that there exist a sequence of rotations $R_k(y)$ converging to $R(y)$ and a sequence $\alpha_k(y)$ converging to $\alpha(y)$, for suitable $R(y)$ and $\alpha(y)$ such that
\begin{equation}\label{epsreg1}
\|R_{k}(y)-R(y)\|\leq C(\lambda,\tau)\lambda^{k\tau}\tilde\eps,\qquad |\alpha_{k}(y)-\alpha(y)|\leq C(\lambda,\tau)\lambda^{k\tau}\tilde\eps\,,
\end{equation} 
with $C(\lambda,\tau)=\frac{\overline C}{\lambda(1-\lambda^\tau)}$ and 
$$
R_{k}(y)(\overline{\partial E\cap H}-y) \cap\mathcal C_{\varrho_k}\subset \Big\{ \abs{x_N-\alpha_k(y) x_1}< \lambda^{k(1+\tau)}\frac{\tilde\eps}{2} \Big\}\,,
$$
with $\varrho_k=\frac{\lambda^{k+1}}{32}$. Note now that by the classical interior regularity results $\partial E\cap H$ is a locally $C^{1,\gamma}$-hypersurface for all $\gamma\in(0,1)$, provided $\tilde\eps$ is sufficiently small. Therefore the hyperplane $$y+R(y)^{-1}(\{x_N-\alpha(y)x_1=0\})$$ coincides with the tangent plane to $\partial E$ at $y$.

Let now $y,z\in\partial E\cap H \cap \mathcal C_{\frac12}$ with $0<|y-z|<\frac{\lambda}{32}$ and let $h$ an integer $h\geq0$ such that $\frac{\varrho_{h+1}}{2}\leq|y-z|<\frac{\varrho_{h}}{2}$. Assume that $0< y_1\leq z_1$. Since $\mathcal C_{\frac{\varrho_h}{2}}(z')\subset \mathcal C_{\varrho_h}(y')$, we have 
$$
R_{h}(y)(\overline{\partial E\cap H}-z) \cap\mathcal C_{\frac{\varrho_h}{2}}\subset \Big\{ \abs{x_N-\alpha_h(y) x_1}< \lambda^{h(1+\tau)}\tilde\eps \Big\}\,.
$$
Thus we may apply Lemma~\ref{lm:epsilon} with $\varrho=\frac{\varrho_h}{2}$, $\eps=\frac{64\lambda^{h\tau}}{\lambda}\tilde\eps\leq\bar\eps$ provided we have chosen $\tilde\eps$ sufficiently small. Thus we get for $k\geq h$ a sequence of radii $r_k=\frac{\lambda^{k-h+1}\varrho_h}{32}=\frac{\lambda^{k+2}}{32^2}$, a sequence of rotations
$S_k(z)$ converging to $S(z)$ and a sequence $\beta_k(z)$ converging to $\beta(z)$ such that
$$
S_{k}(z)(\overline{\partial E\cap H}-z) \cap\mathcal C_{r_k}\subset \{ \abs{x_N-\beta_k(z) x_1}< \lambda^{k(1+\tau)}\tilde\eps\}\,.
$$
Clearly  $S(z)= R(z)$ and $\beta(z)=\alpha(z)$ by the uniqueness of the tangent plane.  Note also that, arguing as for \eqref{epsreg1}, we  have
$$
\|S_{k}(z)-R(z)\|\leq C(\lambda,\tau)\lambda^{k\tau}\tilde\eps,\qquad |\beta_{k}(z)-\alpha(z)|\leq C(\lambda,\tau)\lambda^{k\tau}\tilde\eps,
$$
for a possibly larger constant $C(\lambda,\tau)$. Therefore, since $R_h(y)=S_h(z)$, and since $\frac{\lambda^{h+2}} {64}\leq|y-z|$ by our choice of $h$, we have
$$
\|R(y)-R(z)\|\leq\|R(y)-R_h(y)\|+\|S_h(z)-R(z)\|\leq 2C(\lambda,\tau)\lambda^{h\tau}\tilde\eps\leq \widetilde C(\lambda,\tau)\tilde\eps|y-z|^\tau\,.
$$
A similar estimate holds also for $|\alpha(y)-\alpha(z)|$, showing that both the maps $\alpha$ and $R$ are $\tau$-H\"older continuous uniformly up to $\partial H$. This proves that $\partial E$ is of class $C^{1,\tau}$ up to $\partial H$, where $\tau$ is the exponent fixed at the beginning of Step 1. 

Finally observe that if $y\in\overline{\partial E\cap H}\cap\partial H$, we may choose a sequence $y_k\in\partial E\cap H$ converging to $y$ and such that $y_k\cdot e_1<\frac{\lambda^{k}}{32}$. Then, from Lemma~\ref{lm:epsilon} we have that $\alpha_k(y_k)\geq\frac{\sigma}{\sqrt{1-\sigma^2}}$ and in turn using \eqref{epsreg1} and passing to the limit we get that also 
\begin{equation}\label{epsreg3}
\alpha(y)\geq\frac{\sigma}{\sqrt{1-\sigma^2}}\,.
\end{equation}

\noindent
{\bf Step 2.} Let us now show that if $\overline {\partial E\cap H}$ is of class $C^{1,\tau}$ in $\mathcal C_{\frac12}$ for some $\tau\in(0,\frac12)$, then it is also of class $C^{1,\gamma}$ for all $\gamma\in(0,1/2)$.
To this aim we take a point $y\in\overline {\partial E\cap H}\cap\partial H\cap\mathcal C_{\frac12}$ and consider two cases.

Assume first that $\alpha(y)>\frac{\sigma}{\sqrt{1-\sigma^2}}$. Exploiting the $C^1$ regularity of $\overline {\partial E\cap H}$ up to $\partial H$  we may find $\eps<\eps_2$ and $\varrho$ so small that $\alpha(y)>\frac{\sigma}{\sqrt{1-\sigma^2}}+M_0\eps$, where $M_0$ and $\eps_2$ are the constants of Lemma~\ref{lm:barrier}, and that
$$
(\partial E-y)\cap \mathcal C_{2\varrho}^+\subset \{ \abs{x_N-\alpha x_1}< \eps\varrho\big\}\,.
$$
Then we have that $
\overline{(\partial E -y)\cap H}\cap \partial H\cap\mathcal C_{\varrho/2}=\partial_{\partial H}O\cap\mathcal C_{\varrho/2}\cap\{|x_N|<\eps\varrho \}$. Therefore, see for instance \cite{DuzaarSteffen02}, we may conclude that $\overline {\partial E\cap H}$ is of class $C^{1,\gamma}$ in $\mathcal C_{\varrho/2}(y')$ for all $\gamma\in(0,1)$.

Otherwise, recalling \eqref{epsreg3}, 
 we have $\alpha(y)=\frac{\sigma}{\sqrt{1-\sigma^2}}$. In this case, given $\gamma\in(0,1/2)$, we may choose $\varrho$ so small that the assumptions of the claim in Step 1 are satisfied in $\mathcal C_{\varrho}(y')$ with $\hat\eps(\tau)$ replaced by $\hat\eps(\gamma)$ and the conclusion follows from Step 1.
\end{proof}

%
%
%


\section{A monotonicity formula and proof of the main  results}\label{sec:mainthm}


In this section, in view of the applications to the model for nanowire growth  discussed in Subsection~\ref{nanosec},  we consider also the case of convex polyhedral obstacles. Thus, in order to
deal at the same time with convex and smooth obstacles, we introduce the following definition where we identify \(O\) as a subset of \(\mathbb R^{N-1}\).

\begin{definition}

Let \(O\subset \partial H\approx \R^{N-1}\), we say that \(O\) is locally  \emph{semi-convex} at scale \(\bar r>0\) and with constant \(C\ge 0\) such that, with if for all \(\bar x \in \partial_{\partial H} O\) there exists a radius \(C\)-semiconvex function \(\psi: \mathbb R^{N-2} \to \mathbb R\) with \(\psi(0)=0\) such that, up to a change of of coordinates:
\begin{equation}\label{e:graphrep}
O\cap \partial H \cap B_{\bar r}(\bar x)=\Bigl(\bar x+\bigr\{ (0,x'',x_N): x_N \ge \psi (x'')\bigl\}\Bigr)\cap B_{\bar r}(\bar x).
\end{equation}
\end{definition}
Recall that a function \(\psi: \mathbb R^{N-2} \to \mathbb R\) is said to be \(C\) semiconvex if the function \(\psi(x'')+C|x''|^2/2\) is convex. In particular the sub-differential of \(\partial \psi(x'')\) for all \(x''\)  is non-empty, where
\[
\partial \psi(x'')=\Bigl\{p\in\R^{N-2}:  \psi (y'')\ge \psi (x'') +p \cdot(y''-x'')-\frac{C|x''-y''|^2}{2}\text{ for all \(y''\in\R^{N-2}\)}\Bigr\}. 
\]
Note in particular that if \(\psi (0)=0\) and \(p \in \partial \psi (x'')\) then 
\begin{equation}\label{e:conv}
0\ge \psi (x'')-p \cdot x''-\frac{C|x''|^2}{2} . 
\end{equation}
By taking \(O\) as above 
 we define  for \(x =(0,x'',x_N)\in \partial_{\pa H} O\) the  normal and the tangent cones  at \(x\) in $\pa H$ as  
\[
N_{x} O=\{  \lambda (0,p, -1):\, p \in  \partial \psi (x''), \lambda \in [0,+\infty)\}
\]
and 
\[
T_{ x} O=\{ v \in \R^{N}:\, v\cdot e_1=0,\,v \cdot \nu\le 0 \quad\text{for all \(\nu \in N_{x} O\)}\}.
\]
It is well known  that for \(x =(0,x'',x_N)\in \partial_{\pa H} O\), the sets 
$$
O_{x, r}=\frac{O- x}{r} \to T_{ x} O
$$
as \(r\to 0\) where the convergence is the Kuratowski sense.
We start with the following technical lemma.
\begin{lemma}\label{lm:vectorfield}
Let \(O\subset \partial H\) be such that \eqref{e:graphrep} is satisfied. Then there exist a smooth  vector field \(X: \R^N \to \mathbb R^{N}\) such that
\begin{enumerate}
\item[(i)] \(X(x)\cdot e_1=0 \) for all \(x \in \partial H\)
\item[(ii)] \(X(x) \cdot \nu\ge 0\) for all \(x \in \partial_{H} O\cap B_{\bar r}(\bar x)\)  and all \(\nu \in N_{x} O\)
\item[(iii)] For all \(x \in B_{\bar r} (\bar x) \) it holds:
\begin{align}
X(x)&=x+ O(|x|^2)\label{e:st1}\,,
\\
\nabla X(x)&=\mathrm{Id}+O(|x|)\label{e:st2}\,.
\end{align}
\end{enumerate}
\end{lemma}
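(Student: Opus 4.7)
The construction is purely algebraic: no cut-off or partition of unity is needed. After a translation, I may assume $\bar x=0$, so that $\partial H=\{x_1=0\}$ and the obstacle is locally given by $\{(0,x'',x_N):x_N\ge \psi(x'')\}$ with $\psi(0)=0$. My ansatz is a perturbation of the identity of the form $X(x)=x+v(x)e_N$, looking for the simplest quadratic scalar $v$ which converts the semiconvexity estimate \eqref{e:conv} into the sign required on the normal cone. A direct calculation shows that $v(x)=-\tfrac{C}{2}|x''|^2$ works, so I would define
\[
X(x)\;:=\;\bigl(\,x_1,\;x'',\;x_N-\tfrac{C}{2}|x''|^2\,\bigr),\qquad x=(x_1,x'',x_N)\in\R\times\R^{N-2}\times\R,
\]
which is a (polynomial, hence) smooth vector field on all of $\R^N$.

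I would then verify the three properties in turn. Property (i) is immediate from $X_1(x)=x_1$, which vanishes on $\partial H$. For the expansions in (iii) one computes
\[
X(x)-x\;=\;-\tfrac{C}{2}|x''|^{2}\,e_N,\qquad \nabla X(x)-\mathrm{Id}\;=\;-C\sum_{j=2}^{N-1}x_j\,e_N\otimes e_j,
\]
which are plainly $O(|x|^2)$ and $O(|x|)$ respectively.

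The central point is (ii). Let $x\in\partial_{\partial H}O\cap B_{\bar r}(\bar x)$; by the graph representation \eqref{e:graphrep} we may write $x=(0,x'',\psi(x''))$, and any $\nu\in N_{x}O$ has the form $\nu=\lambda(0,p,-1)$ for some $\lambda\ge 0$ and $p\in\partial\psi(x'')$. Using $\psi(0)=0$, the semiconvexity inequality \eqref{e:conv} applied with ``$y''$''$=0$ gives $p\cdot x''+\tfrac{C}{2}|x''|^{2}\ge\psi(x'')$, hence
\[
X(x)\cdot\nu\;=\;\lambda\bigl(p\cdot x''-x_N+\tfrac{C}{2}|x''|^{2}\bigr)\;=\;\lambda\bigl(p\cdot x''-\psi(x'')+\tfrac{C}{2}|x''|^{2}\bigr)\;\ge\;0,
\]
as required.

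Conceptually there is no real obstacle: the only thing to discover is the precise correction term $-\tfrac{C}{2}|x''|^{2}e_N$. It is forced by \eqref{e:conv} in the sense that it is the smallest quadratic adjustment of the naive choice $X(x)=x$ (which trivially satisfies (i) and (iii) but not (ii)) that also enforces the normal-cone condition, and being of order $|x''|^2$ it does not disturb the first-order behaviour required in (iii).
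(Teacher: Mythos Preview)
Your proof is correct and is essentially identical to the paper's: both translate to $\bar x=0$, define $X(x)=\bigl(x_1,x'',x_N-\tfrac{C}{2}|x''|^{2}\bigr)$, and verify (ii) via the inequality \eqref{e:conv}. Your write-up even gives slightly more detail on the verification of (iii).
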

\begin{proof}
We may assume that \(\bar x=0\) and we define 
\[
X(x)=\Bigl(x_1,x'',x_N-\frac{C|x''|^2}{2}\Bigr)
\]
where \(C\) is the semiconvexity constant of \(\psi\). Clearly (i) and (iii) are satisfied. To check (ii), note that if $\nu\in N_xO$ then
\[
\nu=\lambda (0,p,-1)
\]
for \(p \in \partial \psi (x')\) and \(\lambda \ge 0\) so that 
\[
X(x) \cdot \nu=\lambda\Bigl(p\cdot x'-\psi(x')+\frac{C|x'|^2}{2}\Bigr)\ge 0
\]
by \eqref{e:conv}.
\end{proof}

We can now state the desired monotonicity formula for \((\Lambda,r_0)\) minimizers.
\begin{theorem}\label{thm:montonicity}
Let   $E\subset H$  be a $(\Lambda,r_0)$-minimizer of $\Fc_\sigma^O$  with obstacle $O$. Assume that \(\bar x \in \partial_{\partial H} O\) and that \eqref{e:graphrep} is satisfied with \(0<\bar r\le  r_0\) and a \(C\)-semiconvex function \(\psi\) with \(\psi(0)=0\). Then there exists a constant \(c_0=c_0(C,\Lambda,N)\) such that  for all $0<s<r$, with $r<\bar r$ sufficiently small,
\beq\label{monotone-1}
e^{c_0r}\frac{\Fc_\sigma^O(E, B_{r}(\bar x))}{r^{N-1}}-e^{c_0s}\frac{\Fc_\sigma^0(E, B_{s}(\bar x))}{s^{N-1}}\geq\frac12\int_s^rt^{1-N}e^{c_0t}\frac{d}{dt}\bigg[\int_{\partial E	\cap H\cap B_t(\bar x) }\frac{(x\cdot \nu_E)^2}{|x|^2}\bigg]\,dt\,. 
\eeq
\end{theorem}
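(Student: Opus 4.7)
Without loss of generality take $\bar x = 0$. The plan is the classical cone-comparison argument for almost-minimizers of the perimeter, with the naive radial field $x\mapsto x$ replaced by the field $X$ produced by \cref{lm:vectorfield}. The three properties of $X$ I will exploit are: it is tangent to $\partial H$ (so its flow preserves $\partial H$); it satisfies $X(x)\cdot\nu\ge 0$ for every $\nu\in N_xO$ along $\partial_{\partial H}O$ (so the reverse flow of $X$ drives points into $\overline O$ and hence preserves the class $\mathcal C_O$); and $X(x)=x+O(|x|^2)$, $\nabla X(x)=\Id+O(|x|)$ near the origin, which makes the flow a small $C^1$-perturbation of radial scaling.

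Let $\Phi_t$ denote the flow of $X$ and fix $r\in(0,\bar r)$ (outside a null set) such that $\H^{N-1}(\partial^*E\cap\partial B_r)=0$ and $\partial^*E\cap \partial B_r$ is $\H^{N-2}$-rectifiable. I define the generalized-cone competitor $E_r$ by
\[
E_r\setminus B_r=E\setminus B_r,\qquad E_r\cap B_r=\{\Phi_{-t}(y):\,y\in E\cap \partial B_r,\ 0\le t\le T(y)\},
\]
where $T(y)$ is the time needed for the integral curve of $-X$ starting at $y$ to reach the origin. Properties (i) and (ii) of \cref{lm:vectorfield} guarantee $E_r\in\mathcal C_O$, and clearly $E_r\Delta E\Subset \overline{B_r}$. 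The area formula applied to the parametrisation $(t,y)\mapsto\Phi_{-t}(y)$, combined with the expansion (iii), yields, for some $c=c(C,N)$,
\[
\Fc_\sigma^O(E_r;B_r)\le \frac{r(1+cr)}{N-1}\,g(r),\quad g(r):=\H^{N-2}(\partial^*E\cap H\cap\partial B_r)+\sigma\,\H^{N-2}(\partial^*E\cap\partial H\cap\partial B_r).
\]
Since $|E_r\Delta E|\le \omega_N r^N$, the $(\Lambda,r_0)$-minimality of $E$ for $\Fc_\sigma^O$ (\cref{lm:osta}) therefore gives
\[
\Fc_\sigma^O(E;B_r)\le \frac{r(1+cr)}{N-1}\,g(r)+\Lambda\omega_N r^N.
\]

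On the other hand, the coarea formula on $\partial^*E$ with respect to the function $|x|$, combined with the elementary inequality $1/\sqrt{1-u^2}\ge 1+\tfrac12 u^2$ applied to the tangential-radial Jacobian $\sqrt{1-(x\cdot\nu_E/|x|)^2}$, yields
\[
\frac{d}{dr}\Fc_\sigma^O(E;B_r)\ge g(r)+\frac12\,\frac{d}{dr}\int_{\partial^*E\cap H\cap B_r}\frac{(x\cdot\nu_E)^2}{|x|^2}\,d\H^{N-1}
\]
for a.e.\ $r$ (the boundary term on $\partial H$ contributes no radial excess, since there $x$ lies in $\partial H$). Substituting the earlier upper bound for $\Fc_\sigma^O(E;B_r)$ and multiplying through by $r^{1-N}$ produces, for a suitable $c_0=c_0(C,\Lambda,N)$, the differential inequality
\[
\frac{d}{dr}\Big(e^{c_0 r}\,\frac{\Fc_\sigma^O(E;B_r)}{r^{N-1}}\Big)\ge \frac{e^{c_0 r}}{2\,r^{N-1}}\,\frac{d}{dr}\int_{\partial^*E\cap H\cap B_r}\frac{(x\cdot\nu_E)^2}{|x|^2}\,d\H^{N-1};
\]
integrating from $s$ to $r$ then gives \eqref{monotone-1}.

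The main obstacle is the competitor construction: the naive radial cone (flow of $-x$) over $\partial^*E\cap\partial B_r$ need not remain in $\mathcal C_O$ at a point of $\partial_{\partial H}O$, which is precisely why the corrected field $X$ of \cref{lm:vectorfield} is indispensable. Once $X$ is in place, the delicate bookkeeping is the multiplicative error $1+cr$ in the area estimate, produced by $X(x)-x=O(|x|^2)$, by $\nabla X-\Id=O(|x|)$, and by the $C$-semiconvexity of the graph function $\psi$; this error is exactly what gets absorbed by the exponential weight $e^{c_0 r}$ in the final formula.
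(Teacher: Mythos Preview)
Your argument is correct and arrives at the same differential inequality, but it follows a genuinely different route from the paper. The paper never builds a cone competitor; instead it takes the smooth cutoff vector field $T(x)=-\phi(|x|/r)X(x)$, observes that its flow $\varphi_t$ preserves $\partial H$ and maps $O$ into $O$ (so $\varphi_t(E)$ is admissible), and differentiates the minimality inequality $\Fc_\sigma^O(E)\le \Fc_\sigma^O(\varphi_t(E))+\Lambda|\varphi_t(E)\Delta E|$ at $t=0$. This yields a first-variation inequality involving the tangential divergence $\Div_\tau T$, which is then expanded using \eqref{e:st1}--\eqref{e:st2}; the result is directly the inequality $h'(r)\ge -c_0h(r)+\tfrac12k(r)$ for $h(r)=r^{1-N}\Fc_\sigma^O(E;B_r)$, and one finishes by letting $\phi\to 1$.

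Your cone-competitor approach is the more geometric of the two and makes the role of $X$ very transparent, but it carries more overhead: you must check that the integral curves of $-X$ from $\partial B_r$ foliate $B_r\setminus\{0\}$ (they do, for $r$ small, since $\tfrac{d}{dt}|\Phi_{-t}(y)|^2<0$), cope with the infinite hitting time $T(y)$ via a reparametrisation, and justify the Jacobian estimate $\H^{N-1}(\text{flow-cone over }\Gamma)\le \tfrac{r(1+cr)}{N-1}\H^{N-2}(\Gamma)$. The paper's first-variation approach bypasses all of this, since it only needs the infinitesimal data $X$, $\nabla X$ and never an explicit competitor. On the other hand, your coarea step (using $1/\sqrt{1-u^2}\ge 1+u^2/(2\sqrt{1-u^2})$ to produce the $\tfrac12\tfrac{d}{dr}\int u^2$ term) is slightly sharper than what the first-variation computation gives before passing to the limit $\phi\to 1$. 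Both routes use \cref{lm:vectorfield} for exactly the same purpose you identify: making the deformation compatible with the obstacle while keeping it an $O(r)$ perturbation of the radial field.
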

\begin{proof} 
In this proof by $O(r)$ we mean any function bounded by $cr$ for $r$ small, where the constant $c$ depends only on $\Lambda$, $N$ and the semiconvexity constant $C$.

We assume that \(\bar x=0\) and we let \(\phi\in C_c^\infty([0,1);[0,1))\) be a smooth decreasing function with \(\phi (0)=1\)  and we consider for \(r\le \bar r/2\)  the vector field \(T: \R^N\to \R^N\)  defined as 
\[
T(x)=-\phi \Biggl(\frac{|x|}{r}\Biggr)X(x)\,,
\]
where $X$ is as in  Lemma~\ref{lm:vectorfield}. By (i) and (ii) of the lemma, if we let \(\varphi_t\) be the flow generated by \(T\), then for all \(t>0\)
\[
\varphi_t(\partial H)=\partial H,\qquad  \varphi_t( O)\subset  O\qquad \text{and} \qquad \varphi_t(x)=x\quad\text{for $x\not\in B_r$\,.} 
\]
In particular  \(\varphi_t(E)\Delta E\Subset B_{\bar r}\) and thus the set \(\varphi_t(E)\) is a competitor for the $(\Lambda, r_0)$-minimality of $E$.  Hence  
\beq\label{monotone0}
\begin{split}
\Fc_\sigma^O (E;B_{\bar r})&
\le \Fc_\sigma^O (\varphi_t(E);B_{\bar r})+\Lambda |\varphi_t(E)\Delta E| \\
&\leq P(\varphi_t(\pa E\setminus O))+\sigma P(\varphi_t(\pa E\cap O))+\Lambda |\varphi_t(E)\Delta E|
\end{split}
\eeq
for all \(t>0\).
By using the coarea formula and recalling that  \(|X(x)|=O(r)\) on \(\mathrm{spt} \,T\), it is easy to check that for $t>0$ small enough
$$
|\varphi_t(E)\Delta E|= t\int_{\partial E	\cap H} |T(x)\cdot\nu_E(x)|\,d\mathcal H^{N-1} +o(t)\leq
tO(r)\int_{\partial E	\cap H} \phi \biggl(\frac{|x|}{r}\biggr)\,d\mathcal H^{N-1}\,.
$$
Therefore, differentiating the inequality in \eqref{monotone0} we get
\begin{equation}\label{e:der}
O(r)\int_{\partial E	\cap H} \phi \biggl(\frac{|x|}{r}\biggr)\,\mathcal H^{N-1}\le \int_{\partial E \setminus O} \Div_{\tau} T d \mathcal H^{N-1}+\sigma  \int_{\partial E	\cap O} \Div_{\tau} T d \mathcal H^{N-1}\,,
\end{equation}
where 
\[
\Div_{\tau} T =\Div T-\nabla T [\nu_E] \cdot \nu_E
\]
is the tangential divergence of \(T\). Since 
\[
\nabla T =-\phi \biggl(\frac{|x|}{r}\biggr)\nabla X(x)-\phi' \biggl(\frac{|x|}{r}\biggr)\frac{|x|}{r} \frac{x}{|x|}\otimes\frac{X(x)}{|x|}
\]
by exploiting \eqref{e:st1}, \eqref{e:st2}  we have
\[
\begin{split}
\Div_{\tau} T =&-\phi \biggl(\frac{|x|}{r}\biggr)(N-1)-\phi' \biggl(\frac{|x|}{r}\biggr)\frac{|x|}{r}
\\
&+\phi' \biggl(\frac{|x|}{r}\biggr)\frac{|x|}{r}\biggl(\frac{(x\cdot \nu_E)^2}{|x|^2}\biggr)
+O(r)\phi \biggl(\frac{|x|}{r}\biggr)+O(r)\phi' \biggl(\frac{|x|}{r}\biggr)\frac{|x|}{r}.
\end{split}
\]
which can be written as 
\[
\begin{split}
\Div_{\tau} T &=(1+O(r))r^N\frac{d}{d r} \Biggl(r^{1-N}\phi \biggl(\frac{|x|}{r}\biggr)\Biggr)
\\
&+\phi' \biggl(\frac{|x|}{r}\biggr)\frac{|x|}{r}\biggl(\frac{(x\cdot \nu_E)^2}{|x|^2}\biggr)
+O(r)\phi \biggl(\frac{|x|}{r}\biggr)
\end{split}
\]
Combining the above inequality with \eqref{e:der} we infer, after easy computations,
\[
\begin{split}
(1+O(r))&\frac{d}{d r} \Biggl(r^{1-N}  \int_{\partial E	\setminus O} \phi \biggl(\frac{|x|}{r}\biggr)\mathcal H^{N-1}+\sigma r^{1-N}   \int_{\partial E	\cap O} \phi \biggl(\frac{|x|}{r}\biggr) d \mathcal H^{N-1}\Biggr)
\\
\ge& -c \Biggl(r^{1-N}  \int_{\partial E	\setminus O} \phi \biggl(\frac{|x|}{r}\biggr)\mathcal H^{N-1}+\sigma r^{1-N}  \int_{\partial E	\cap O} \phi \biggl(\frac{|x|}{r}\biggr) d \mathcal H^{N-1} \Biggr)
\\
&- r^{-N} \int_{\partial E	\setminus O}\phi' \biggl(\frac{|x|}{r}\biggr)\frac{|x|}{r}\biggl(\frac{(x\cdot \nu_E)^2}{|x|^2}\biggr)-\sigma r^{-N} \int_{\partial E	\cap O}\phi' \biggl(\frac{|x|}{r}\biggr)\frac{|x|}{r}\biggl(\frac{(x\cdot \nu_E)^2}{|x|^2}\biggr).
\end{split}
\]
In turn, noticing that
$$
\int_{\partial E	\cap H}\phi' \biggl(\frac{|x|}{r}\biggr)\frac{|x|}{r}\frac{(x\cdot \nu_E)^2}{|x|^2}=0\,,
$$
setting
\begin{align*}
h(r)
&=r^{1-N}  \int_{\partial E	\setminus O} \phi \biggl(\frac{|x|}{r}\biggr)\,d\mathcal H^{N-1}+\sigma r^{1-N}   \int_{\partial E	\cap O} \phi \biggl(\frac{|x|}{r}\biggr) d \mathcal H^{N-1}, \\
k(r)&=- r^{-N} \int_{\partial E	\setminus O}\phi' \biggl(\frac{|x|}{r}\biggr)\frac{|x|}{r}\frac{(x\cdot \nu_E)^2}{|x|^2}-\sigma r^{-N} \int_{\partial E	\cap O}\phi' \biggl(\frac{|x|}{r}\biggr)\frac{|x|}{r}\frac{(x\cdot \nu_E)^2}{|x|^2}\\
&=r^{1-N}\frac{d}{dr}\biggl[\int_{\partial E	\cap H}\phi \biggl(\frac{|x|}{r}\biggr)\frac{(x\cdot \nu_E)^2}{|x|^2}\,d\mathcal H^{N-1}\bigg]\,, \\
\end{align*}
the inequality above implies that
$$
h'(r)\geq -c_0h(r)+\frac{1}{2}k(r)
$$
provided that $2\geq1+O(r)\geq1/2$, where $c_0$ is a costant depending only on $\Lambda, N$ and $C$. 
Note also that $k(r)\geq0$.
Multiplying both sides of this inequality by $e^{c_0r}$ and integrating in $(s,r)$, we have
\begin{align*}
&h(r)e^{c_0r}-h(s)e^{c_0s}
\geq \frac12\int_s^re^{c_0t}k(t)\,dt\,.
\end{align*}
Then we conclude by letting $\phi\to1$.
\end{proof}

By classical argument, the above monotonicity formula allows for the study of blow-ups of \(\Lambda\)-minimizers. To this aim we preliminary observe that the following compactness property for blow-ups in the case of convex polyhedral domains holds.

\begin{theorem}\label{th:compactnessbis}
Let   $E\subset H$  be a $(\Lambda,r_0)$-minimizer of $\Fc_\sigma $  with obstacle $O$, where $O$ is a convex polyhedron. Let \(\bar x\in \partial_{\partial H}O\cap\pa E\) and set
$$
E_h=\frac{E-\bar x}{r_h}\,,
$$
where $r_h\to 0^+$.
 Then there exist a  (not relabelled) subsequence  
 and a set   $E_\infty$ of locally finite perimeter, such that $E_h\to E_\infty$ in $L^1_{loc}(\R^N)$ with the property that $E_\infty$ is a $0$-minimizer of $\Fc^{\tilde O}_\sigma$, where $\tilde O=T_{\bar x} O$.
Moreover, 
$$
\mu_{E_h}\wtos \mu_E\,,   \quad |\mu_{E_h}|\wtos |\mu_E|\,,
$$
as Radon measures. 
In addition, the following Kuratowski convergence type properties hold:
\[
\begin{split}
&\text{(i)\,\,for every $x\in\pa E_\infty$ there exists $x_h\in\pa E_h$ such that $x_h\to x$;} \\
& \text{(ii)\,\,if
 $x_h\in \overline{\pa E_h\cap H}$ and $x_h\to x$, then $x\in \overline{\pa E_\infty\cap H}$\,.}
 \end{split}
 \]
 Finally,  either 
 $\partial E_\infty\cap(\partial H\setminus{\tilde O})=\partial H\setminus{\tilde O}$ or $\partial E_\infty\cap \partial H\subset\overline{\tilde O}$.
\end{theorem}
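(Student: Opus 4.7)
The plan is to follow the proof of Theorem~\ref{th:compactness} with adjustments for the convex polyhedral obstacle. Setting $O_h:=(O-\bar x)/r_h$, the set $E_h$ is a $(\Lambda r_h, r_0/r_h)$-minimizer of $\Fc_\sigma$ with obstacle $O_h$, where $\Lambda r_h\to 0$ and $r_0/r_h\to +\infty$. A key structural simplification is that, since $O$ is a convex polyhedron, it coincides with (a translate of) its tangent cone in a neighborhood of $\bar x$; hence on any fixed compact set $K\subset \pa H$ one has $O_h\cap K=\tilde O\cap K$ for all $h$ sufficiently large, where $\tilde O:=T_{\bar x}O$. This exact identity replaces the $C^{1,\alpha}_{loc}$ convergence and signed-distance thickenings of the obstacle used in Theorem~\ref{th:compactness}.

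The first step is to verify that Proposition~\ref{p:densityestimates} applies uniformly along $\{E_h\}$. The hypothesis $O\in\mathcal B_R$ is used in that proof only to secure a lower bound on $\H^{N-1}(D_{r/4}(y')\cap \pa H\setminus O)$ at small scales around points $y'\in \pa_{\pa H}O$. For a convex polyhedron this lower bound is governed by the smallest solid angle among the vertices of $O$ and is scale invariant, so it extends to every rescaling $O_h$ with the same constant. I expect this adaptation to be the main technical point: the universal constants of Proposition~\ref{p:densityestimates} now depend also on the geometry of $O$, but the density estimates remain uniform along the blow-up sequence. With them in hand, $\{E_h\}$ has locally equibounded perimeters, and up to a (not relabelled) subsequence $E_h\to E_\infty$ in $L^1_{loc}(\R^N)$ with $\mu_{E_h}\wtos \mu_{E_\infty}$ as Radon measures.

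Next, I would prove that $E_\infty$ is a $0$-minimizer of $\Fc^{\tilde O}_\sigma$ by mimicking the competitor construction of Theorem~\ref{th:compactness}: given $F\subset H$ with $E_\infty\Delta F\Subset B_{r_0}(x)$, set $F_h:=(F\cap B_r(x))\cup(E_h\setminus B_r(x))$ for a suitable $r<r_0$ with $\H^{N-1}((E_h\Delta E_\infty)\cap \pa B_r(x))\to 0$. Using Lemma~\ref{lm:osta} and Remark~\ref{rem:lsc}, together with the identity $O_h\cap B_{r_0}(x)=\tilde O\cap B_{r_0}(x)$ for $h$ large, the lower semicontinuity argument of Theorem~\ref{th:compactness} then yields
$$
\Fc^{\tilde O}_\sigma(E_\infty;B_{r_0}(x))\leq \Fc^{\tilde O}_\sigma(F;B_{r_0}(x)).
$$
Taking $F=E_\infty$ gives $\Fc^{O_h}_\sigma(E_h;B_{r_0}(x))\to \Fc^{\tilde O}_\sigma(E_\infty;B_{r_0}(x))$, and then $|\mu_{E_h}|\wtos|\mu_{E_\infty}|$ follows exactly as in Theorem~\ref{th:compactness}, distinguishing according to the sign of $\sigma$. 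The Kuratowski convergences (i) and (ii) are then a standard consequence of the perimeter density estimate in Proposition~\ref{p:densityestimates}(iii).

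Finally, for the dichotomy, suppose there exists $x\in \pa E_\infty\cap(\pa H\setminus\overline{\tilde O})$. Then near $x$ the set $E_\infty$ is a $0$-minimizer of the standard perimeter in $\R^N\setminus\overline{\tilde O}$ and is contained in $H$; hence every blow-up at $x$ is a perimeter minimizer contained in a half-space, and thus a half-space itself by \cite[Lemma~3]{dm19}. Consequently $x$ is a regular point of $\pa E_\infty$, and the strong maximum principle for the mean curvature equation forces $\pa E_\infty$ to coincide with $\pa H$ on the connected component of $\pa H\setminus\overline{\tilde O}$ containing $x$. Since $\tilde O$ is a closed convex proper subset of $\pa H\cong\R^{N-1}$ with $N\geq 3$, its complement is connected, so $\pa E_\infty\cap(\pa H\setminus\overline{\tilde O})=\pa H\setminus\overline{\tilde O}$, proving the dichotomy.
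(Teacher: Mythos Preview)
Your proposal is correct and matches the paper's approach exactly: the paper's proof is a single sentence stating that the argument of Theorem~\ref{th:compactness} carries over once one checks that the density estimates of Proposition~\ref{p:densityestimates} remain valid for convex polyhedral obstacles, and you have correctly identified where and how that check enters. Your observation that a convex polyhedron locally coincides with (a translate of) its tangent cone, so that $O_h=\tilde O$ on any fixed compact set for $h$ large, is a clean replacement for the $O^\delta$-thickening step used in Theorem~\ref{th:compactness}. One small wording issue in the final paragraph: the complement of an arbitrary closed convex proper subset of $\R^{N-1}$ need not be connected (e.g.\ a hyperplane), so you should rather invoke that $\tilde O$ is a convex cone with nonempty interior, whose complement in $\R^{N-1}$ is indeed connected for every $N\ge 2$.
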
 
The proof of this theorem follows as in the proof of Theorem~\ref{th:compactness} observing that the density estimates proved in Proposition~\ref{p:densityestimates} still hold when $O$ is a convex polyhedron.

\begin{proposition}\label{cor:bu}
Let   $E\subset H$  be a $(\Lambda,r_0)$-minimizer of $\Fc_\sigma$  with obstacle $O$, where  $O$ is either of class $C^{1,1}$ or a convex polyhedron. If \(\bar x\in \partial_{\partial H}O\cap\pa E\) then the sets 
\[
E_{\bar x,r}=\frac{E-\bar x}{r}
\]
are pre-compact in \(L^1\) and every limit point \(E_\infty\) as $r\to0$ is a conical minimizer of \(\mathcal F_{\sigma}^{\tilde O}\) with obstacle \(\tilde O=T_{\bar x} O\). Moreover if \(N=3\) either \(\partial E_\infty=\pa H\)  or, after a rotation of coordinates in $\pa H$,  
\[
\partial E_\infty\cap H=\bigl\{ x: x_N=\alpha x_1\bigr\}\cap H
\]
with 
\[
\alpha \ge \frac{\sigma}{\sqrt{1-\sigma^2}}
\]
and $\tilde O$ is a half space.
\end{proposition}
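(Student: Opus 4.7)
The plan is to combine a $BV$-compactness argument, the monotonicity formula of Theorem~\ref{thm:montonicity}, and, in dimension three, the classical classification of area-minimizing cones. Pre-compactness and identification of the limit as a conical minimizer of the rescaled problem come first, and the specialization to $N=3$ follows from a dichotomy already built into Theorems~\ref{th:compactness} and \ref{th:compactnessbis}.

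By Proposition~\ref{p:densityestimates}(i) the blow-ups $E_{\bar x,r}$ have locally equibounded perimeters, so standard $BV$ compactness yields a subsequence $r_h\to 0^+$ with $E_{\bar x,r_h}\to E_\infty$ in $L^1_{loc}$. Each $E_{\bar x,r}$ is a $(\Lambda r,r_0/r)$-minimizer of $\Fc_\sigma$ with obstacle $O_{\bar x,r}$, and as $r\downarrow 0$ one has $\Lambda r\to 0$, $r_0/r\to\infty$, and $O_{\bar x,r}\to \tilde O=T_{\bar x}O$ in the Kuratowski sense (which is $C^{1,\alpha}_{loc}$ convergence by Remark~\ref{rem:BR} in the $C^{1,1}$ case, and immediate in the polyhedral case). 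The compactness statements in Theorems~\ref{th:compactness} and \ref{th:compactnessbis} then identify $E_\infty$ as a $0$-minimizer of $\Fc_\sigma^{\tilde O}$.

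The core of the argument is conicality. The monotonicity formula makes $r\mapsto e^{c_0 r}\Fc_\sigma^O(E;B_r(\bar x))/r^{N-1}$ non-decreasing on a right neighbourhood of $0$, so $\Theta_0:=\lim_{r\to 0^+}\Fc_\sigma^O(E;B_r(\bar x))/r^{N-1}$ exists. Rescaling gives
\[
\frac{\Fc_\sigma^{O_{\bar x,r}}(E_{\bar x,r};B_\rho)}{\rho^{N-1}}=\frac{\Fc_\sigma^O(E;B_{\rho r}(\bar x))}{(\rho r)^{N-1}}\longrightarrow \Theta_0
\]
for every fixed $\rho>0$; combining Remark~\ref{rem:lsc} with a standard gluing competitor (matching $E_\infty$ inside $B_\rho$ with $E_{\bar x,r}$ outside, and exploiting the almost-minimality of $E_{\bar x,r}$ to absorb the interface error) upgrades this to $\Fc_\sigma^{\tilde O}(E_\infty;B_\rho)=\Theta_0\,\rho^{N-1}$ for every $\rho>0$. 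Since $\tilde O$ has piecewise linear (hence $0$-semiconvex) boundary, Theorem~\ref{thm:montonicity} applied to the $0$-minimizer $E_\infty$ may be used with $c_0=0$, and the equality case of \eqref{monotone-1} then forces $x\cdot\nu_{E_\infty}=0$ $\H^{N-1}$-a.e.\ on $\partial^* E_\infty\cap H$, which is the infinitesimal statement that $E_\infty$ is a cone with vertex at the origin.

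For $N=3$ the dichotomy in the last parts of Theorems~\ref{th:compactness} and \ref{th:compactnessbis} applies: either $\partial E_\infty\cap(\partial H\setminus\overline{\tilde O})=\partial H\setminus\overline{\tilde O}$, or $\partial E_\infty\cap\partial H\subset\overline{\tilde O}$. In the first alternative, combining conicality with $0\in\partial E_\infty$ from Kuratowski convergence and the strong maximum principle reasoning already used in the proof of Theorem~\ref{th:compactness} forces $E_\infty=H$, i.e.\ $\partial E_\infty=\partial H$. In the second alternative $E_\infty$ is a conical area-minimizer inside $H\subset\R^3$ whose wet region sits inside $\overline{\tilde O}$; the classical classification of $2$-dimensional area-minimizing cones in $\R^3$ yields that $\partial E_\infty\cap H$ is a half-plane through the origin, and after a rotation about the $x_1$-axis in $\partial H$ it can be written as $\{x_N=\alpha x_1\}\cap H$. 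The first variation of $\Fc_\sigma$ on the free portion of $\partial E_\infty$ gives the weak Young inequality $\nu_{E_\infty}\cdot e_1\ge\sigma$, which translates into $\alpha\ge\sigma/\sqrt{1-\sigma^2}$. Since the wet region of such a half-plane cone is a closed half-plane in $\partial H$ through $0$ and is contained in the convex cone $\overline{\tilde O}$, the cone $\overline{\tilde O}$ must itself contain a closed half-plane and is therefore forced to be a half-space. The main technical obstacle I anticipate is the passage from the one-sided lower semicontinuity of $\Fc_\sigma^{O_{\bar x,r}}$ to the full energy convergence needed to run the monotonicity argument in the limit; this step already required a sign-dependent split on $\sigma$ in the proof of Theorem~\ref{th:compactness}, and in the polyhedral case it must additionally cope with the non-smoothness of $\tilde O$ at a vertex.
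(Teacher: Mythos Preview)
Your argument follows the same scheme as the paper's: compactness (Theorems~\ref{th:compactness}/\ref{th:compactnessbis}) to identify $E_\infty$ as a $0$-minimizer of $\Fc_\sigma^{\tilde O}$, the monotonicity formula to get constant density and hence conicality via $x\cdot\nu_{E_\infty}=0$, and then a cone classification in $N=3$. The ``technical obstacle'' you flag (upgrading lower semicontinuity to full energy convergence) is precisely what the paper dismisses by citing \cite[Theorem~28.6]{Maggi12}; no additional sign-dependent surgery is needed beyond what is already in the proof of Theorem~\ref{th:compactness}.

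The only substantive difference is in the $N=3$ organization. The paper does \emph{not} route the classification through the dichotomy of the compactness theorems. It argues directly that a zero-mean-curvature cone in $\R^3$ is planar, so $\partial E_\infty\cap H$ is a union of half-planes through $0$ that cannot intersect in $H$ by interior regularity, and then invokes minimality to reduce to a single half-plane; this yields at once either $\partial E_\infty=\partial H$ or the half-plane description, and the identification $\partial E_\infty\cap\partial H=\tilde O$ forces $\tilde O$ to be a half-space. Your dichotomy route introduces a ``first alternative'' whose resolution to $E_\infty=H$ is asserted but not fully justified: the maximum-principle argument from Theorem~\ref{th:compactness} only gives that $\partial E_\infty$ coincides with $\partial H$ on $\partial H\setminus\overline{\tilde O}$, not that $\partial E_\infty\cap H=\emptyset$. (In fact this alternative cannot occur for blow-ups of a set satisfying the obstacle constraint, since the trace of each $E_{\bar x,r}$ lies in $\overline{O_{\bar x,r}}$; but you would need to argue this separately.) In the ``second alternative'' you pass directly to a single half-plane via the classification of area-minimizing cones, eliding the paper's explicit step of allowing $k$ half-planes and ruling out $k>1$ by minimality. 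On the other hand, your derivations of $\alpha\ge\sigma/\sqrt{1-\sigma^2}$ from the Young inequality and of $\tilde O$ being a half-space from containment of the wet half-plane are more explicit than the paper's one-line conclusion.
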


\begin{proof}
 Let $E_\infty$ be a limit point of of $E_{\bar x,r}$ as $r\to0$. By Theorem~\ref{th:compactness} or Theorem~\ref{th:compactnessbis} $E_\infty$ is a $0$-minimizer of $\mathcal F_{\sigma}^{\tilde O}$.
Observe that by Theorem~\ref{thm:montonicity} there exists 
$$
\lim_{r\to0}\frac{\Fc_\sigma(E, B_{r}(\bar x))}{r^{N-1}}
$$
and it is finite. From this a standard argument, see for instance the proof of Theorem~28.6 in \cite{Maggi12}, shows that 
$$
\frac{\Fc_\sigma^{\tilde O}(E_\infty, B_{r}(\bar x))}{r^{N-1}}
$$
is constant with respect to $r$.
Thus, the right-hand side of  \eqref{monotone-1} with $E_\infty$ in place of $E$ is zero.  This immediately  implies that $x\cdot\nu_{E_\infty}(x)=0$ for $\H^{N-1}$-a.e. $x$, hence, see \cite[Prop. 28.8]{Maggi12}, $E_\infty$ is a cone.

Note that if \(N=3\) the only cones with zero mean curvature are planes and this forces \(\partial E_{\infty}\) to be a union of planes. These planes can not intersect in \(H\) by the interior regularity theory. In particular since \(0\in \partial E_{\infty} \) (which holds true since \(\bar x \in \partial E\)), either $\pa E_\infty=\pa H$ or, after a rotation in the hyperplane $H$,
\[
\partial E_{\infty}\cap H=\bigcup_{i=1}^k \bigl\{ x: x_1=\alpha_i x_N\bigr\}\cap H.
\]
Minimality forces \(k=1\) and $\pa E_\infty\cap \pa H=\tilde O$ and thus $\tilde O$ is a half space.
\end{proof}

We are now in a position to prove Theorem~\ref{th:reg}.
\begin{proof}[Proof of Theorem~\ref{th:reg}]
We start by proving part (i). 
In view of the result \cite[Theorem 1.2]{De-PhilippisMaggi15} it is enough to prove the regularity in a neighborhood of a point $\bar x\in\pa E\cap\pa_{\pa H}O$. Given such a point, by Proposition~\ref{cor:bu} we know that there exists a sequence $E_h=E_{\bar x,r_h}=\frac{E-\bar x}{r_h}$, $r_h\to0$, of blow-ups of $E$ converging in $L^1$ to a $0$-minimizer $E_\infty$ of \(\mathcal F_{\sigma}^{\tilde O}\), where \(\tilde O=T_{\bar x} O\). Moreover, either \(\partial E_\infty=\pa H\)  or, after a rotation of coordinates in $\pa H$,  
$\partial E_\infty\cap H=\bigl\{ x: x_1=\alpha x_N\bigr\}\cap H$
with $\alpha \ge \frac{\sigma}{\sqrt{1-\sigma^2}}$. Note that if $E_\infty=\pa H$ or if $\alpha> \frac{\sigma}{\sqrt{1-\sigma^2}}$ by conclusion (ii) of Theorem~\ref{th:compactness} we get that $\overline{\pa E_h\cap H}$ satisfies the assumptions of Lemma~\ref{lm:barrier} in a neighborhood of $\bar x$ and thus $\overline{\pa E\cap H}\cap\pa H$ coincides with $\pa_{\pa H}O$ in such a neighborhood. In turn, the conclusion follows by \cite[Theorem~6.1]{DuzaarSteffen02}. If instead $\alpha=\frac{\sigma}{\sqrt{1-\sigma^2}}$, using again (ii) of Theorem~\ref{th:compactness} we have that for $E_h$ satisfies the assumptions of Theorem~\ref{th:epsreg} for $h$ sufficiently large. Hence the conclusion follows also in this case.

Now the proof of part (ii) follows combining (i) with the classical Federer's dimension reduction argument, see for instance \cite[Appendix A]{Simon83} or \cite[Sections 28.4-28.5]{Maggi12}. We leave the details to the reader.
\end{proof}

We conclude the section by proving Theorems~\ref{cinqueuno} and \ref{cinquedue}. In the following we make use of the notation introduced in Subsection~\ref{nanosec}.

\begin{proof}[Proof of Theorem~\ref{cinqueuno}]
We start by recalling that by a standard argument, see for instance Example 21.3 in \cite{Maggi12}, there exists $\Lambda, r_0>0$ such that
$$
J_{\sigma,\omega}(E)\leq J_{\sigma,\omega}(F)+\Lambda||E|-|F||
$$
for all $F\subset\R^3\setminus\cc$ with diam$(E\Delta F)<r_0$. It easily follows that $E$ is a $(\Lambda,r_0)$-minimizer of $J_{\sigma,\omega}$ with obstacle $\omega$ in the sense introduced in the previous sections. Thus the conclusion follows from Theorem~\ref{th:reg}.
\end{proof}

Before proving Theorem~\ref{cinquedue} we  set the following definition.

\begin{definition}\label{nontang}
We say that $E\subset\R^3\setminus\cc$ has {\it a nontangential contact with $\cc_{top}$ at a point $\bar x\in\overline{\pa E\cap\mathscr H}\cap\gamma$} if for any subsequence $E_{\bar x, r_h}$ of blow-ups of $E$ with  $r_h\to0$, converging to $E_\infty$, we have that $\pa E_\infty$ does not contain the plane $\{x_3=0\}$.
\end{definition}

\begin{proof}[Proof of Theorem~\ref{cinquedue}] 
We start by observing that, arguing as in the proof of Theorem~\ref{cinqueuno} we have that $E$ is $(\Lambda,r_0)$-minimizer of $J_{\sigma,\omega}$ with ostacle $\omega$.

Assume first that $E$ has a nontangential contact at all points of $\overline{\pa E\cap\mathscr H}\cap\gamma$. Fix a point  $\bar x\in\overline{\pa E\cap\mathscr H}\cap\gamma$ and consider a sequence $E_{\bar x, r_h}$ of blow-ups of $E$ with  $r_h\to0$, converging to some set $E_\infty$ of locally finite perimeter.  By Proposition~\ref{cor:bu} it follows that  $E_\infty$ is a $0$-minimizer of $J_{\sigma,\omega}$ with obstacle $T_{\bar x}\omega$, that
after a rotation in the plane $\{x_3=0\}$
$$
\partial E_\infty\cap\{x_3>0\}=\bigl\{ x: x_1=\alpha x_3\bigr\}\cap \{x_3>0\}\,,
$$
with $\alpha\geq \frac{\sigma}{\sqrt{1-\sigma^2}}$ and  thus $\pa E_\infty\cap\{x_3=0\}$ is a half plane.  In turn $T_{\bar x}\omega$ is a half plane and thus it cannot be a vertex of the polygon. Hence, thanks to Theorem~\ref{th:reg},   $\overline{\pa E\cap\mathscr H}$ is of class $C^{1,\tau}$ for all $\tau\in(0,\frac12)$  in a neighborhood of $\bar x$. 

Finally, we claim that if $\sigma<0$ then $E$ has a nontangential contact at all points of $\overline{\pa E\cap\mathscr H}\cap\gamma$. Indeed assume by contradiction that $E_\infty=\{x_3>0\}$ for some point $\bar x\in\overline{\pa E\cap\mathscr H}\cap\gamma$ and some sequence of blow-ups $E_{\bar x, r_h}$ with $r_h\to0$. Then, arguing similarly as for Theorem~\ref{th:compactnessbis}, see also \cite[Theorem~2.9]{De-PhilippisMaggi15}, we get that $E_\infty$ is a $0$-minimizer of $J_{\sigma,T_{\bar x}\omega}$, that is 
$$
J_{\sigma,T_{\bar x}\omega}(E_\infty)\leq J_{\sigma,T_{\bar x}\omega}(F)
$$
for all $F\subset\R^3\setminus(T_{\bar x}\omega\times(-\infty,0])$ with $E_\infty\Delta F$ bounded. From this it easily follows that $F_\infty:=(\R^3\setminus(T_{\bar x}\omega\times(-\infty,0]))\setminus E_\infty$ is a $0$-minimizer of $J_{-\sigma,T_{\bar x}\omega}$. By a rotation in the plane $\{x_3=0\}$ we may assume without loss of generality that the half line $\ell=\{(0,x_2,0):\,x_2>0\}$ is contained in $\pa T_{\bar x}\omega\times\{0\}$ and that $F_\infty$ (locally around each point of $\ell$) is contained in the half space $H=\{x_1>0\}$. Hence, locally around any point $y\in\ell$, $F_\infty$ is a $0$ minimizer of $\mathcal F_{-\sigma}$ with obstacle $O=\{x_3<0\}\cap\pa H$. Thus, by the third inequality in \eqref{irene-1} (with $\sigma$ replaced by $-\sigma$) it follows that $\pa F_\infty$ must form with the vertical plane $\pa H$ an angle which is strictly larger then $\pi/2$, thus leading to a contradiction.
\end{proof}

\section{Appendix}
Here we give the proof of Proposition~\ref{prop:savin}. We shall closely follow the proofs of Lemmas~2.1 and 2.2 and of Theorem~1.1 in \cite{Savin} with the necessary changes. 

We denote by $\mathbb{M}^{d\times d}_{\rm sym}$ the space of symmetric $d\times d$ matrices and by ${{\rm Tr}\,(A)}$ the trace of $A$. Let $F:\mathbb{M}^{d\times d}_{\rm sym}\times\R^d\to\R$ be the function
$$
F(A,p):=\frac{{\rm Tr}\,(A)}{(1+|p+\xi|^2)^{1/2}}-\sum_{i,j=1}^d\frac{A_{ij}(p_i+\xi_i)(p_j+\xi_j)}{(1+|p+\xi|^2)^{3/2}}\,,
$$
where $\xi\in\R^d$ is a  vector, with $|\xi|\leq M$.
Observe that there exist two positive constants $\tilde\lambda,\lambda>0$, depending only on $M$ and $d$ such that if $|p|\leq1$, $A\in \mathbb{M}^{d\times d}_{\rm sym}$ and $A\geq0$, then
$$
\tilde\lambda\|A\|\geq F(A,p)\geq\lambda\|A\|\,,
$$
where $\|A\|:=\sqrt{\sum_{i,j}A^2_{ij}}$. Since throughout this section $M$ will be fixed, in the following with a slight abuse of language we will say that a constant is universal if it depends only on $d$ and $M$. 
The next result is essentially as \cite[Lemma 2.1]{Savin}.
\begin{lemma}\label{lemma2.1}
Let $r\in(0,1)$ and $a\in(0,1/2)$. Let $v:B_1\to(0,\infty)$ be a viscosity $(2a)$-supersolution of the equation
\begin{equation}\label{lemma2.10}
\Div \biggl(\frac{\nabla u+\xi}{\sqrt{1+|\nabla u+\xi|^2}}\biggr)  =a\,,
\end{equation}
bounded from above.
There exists a universal constant $c_0$ with the following property:
Let $\overline B_r(x_0)\subset B_1$ and let $B\subset\overline B_1$ be a closed set.  For every $y\in B$ consider the paraboloid 
\begin{equation}\label{lemma2.11}
-\frac{a}{2}|x-y|^2+c_y
\end{equation}
staying below $v$ in $\overline B_r(x_0)$ and touching $v$ in $\overline B_r(x_0)$. Assume that the set  $A$  of all such touching points for $y\in B$ is contained in  $B_r(x_0)$. Then $|A|\geq c_0|B|$.
\end{lemma}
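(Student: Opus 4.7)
The plan is to adapt the Alexandrov--Bakelman--Pucci measure-theoretic argument to the restricted-gradient viscosity notion. After the substitution $w(z):=v(z)+\tfrac{a}{2}|z|^2$, the condition that the paraboloid $-\tfrac{a}{2}|x-y|^2+c_y$ touches $v$ from below in $\overline B_r(x_0)$ translates into the condition that an affine function $z\mapsto ay\cdot z+\tilde c_y$ touches $w$ from below there. Hence, for each $y\in B$, there exists a touching point $x\in A$ such that
\[
w(z)\ge w(x)+ay\cdot(z-x)\qquad\text{for all } z\in \overline B_r(x_0).
\]
Thus $A$ supports every ``slope'' $ay$ with $y\in B$, which is exactly the hypothesis needed to launch an ABP-type area estimate.

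The second ingredient is a universal pointwise upper bound on $D^2 w$ at every $x\in A$. Since $|x-x_0|<r<1$ and $|y|\le 1$, one has $|\nabla\varphi_y(x)|=a|x-y|\le 2a$, so the touching paraboloid $\varphi_y(z):=-\tfrac{a}{2}|z-y|^2+c_y$ is a legitimate test function in the $(2a)$-viscosity sense. The supersolution inequality therefore yields $F(D^2\varphi_y(x),\nabla\varphi_y(x))\le a$, i.e.\ $F(-aI,\nabla v(x))\le a$; combined with the lower Hessian bound $D^2 v(x)\ge -aI$ inherited from the touching paraboloid, and with the uniform ellipticity of $F$ for $|p|\le 2a\le 1$ recorded at the beginning of the appendix, this gives $\|D^2 v(x)\|\le C_*a$ for a universal constant $C_*$; equivalently
\[
0\le D^2 w(x)\le (C_*+1)a\,I.
\]
The inequalities $D^2 v(x)\ge -aI$ and $D^2 v(x)\le C_*aI$ hold only in the Alexandrov sense (i.e.\ pointwise almost everywhere in $A$), but this is enough for the covering argument that follows.

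To conclude, I would invoke the standard area formula/contact-set estimate: the set-valued map that sends $x\in A$ to the collection of slopes $ay$ with $y\in B$ and $\Psi(y)=x$ is contained in the Alexandrov subdifferential of $w$ at $x$, and its image covers $aB$. Hence by the Alexandrov change-of-variables inequality (applied, e.g., to the convex envelope of $w$ against the affine functions with slopes in $aB$, which is $C^{1,1}$ with Hessian controlled by $(C_*+1)a$ thanks to the previous step),
\[
a^d|B|=|aB|\le\int_A\det\bigl(D^2 w(x)\bigr)\,dx\le \bigl((C_*+1)a\bigr)^d|A|,
\]
and rearranging gives $|A|\ge c_0|B|$ with $c_0=(C_*+1)^{-d}$, a universal constant.

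The hard part is the second step: showing that at a.e.\ point of $A$ the restricted $(2a)$-viscosity notion actually delivers the usable pointwise inequality $F(D^2\varphi_y(x),\nabla\varphi_y(x))\le a$. For smooth $v$ this is immediate, but for a merely lower semicontinuous $v$ one must argue that $A$ contains a positive-measure subset of ``clean'' touching points at which $\varphi_y$ can be strictly separated from $v$ away from $x$. This is the classical perturbation trick: replace $\varphi_y$ by $\varphi_y-\eta|z-x|^2$ for small $\eta>0$, observe that the perturbed paraboloid touches $v$ at a single interior point and is strictly below elsewhere, then smoothly mollify to obtain an admissible $C^2$ test function whose gradient at $x$ still satisfies $|\nabla\varphi_y(x)|\le 2a+O(\eta)$, and finally let $\eta\to 0^+$. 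Once this measure-theoretic refinement is in place, the remainder of the proof is the unaltered Alexandrov covering argument.
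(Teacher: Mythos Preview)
Your argument has a genuine gap at the crucial second step. Testing the supersolution with the touching paraboloid $\varphi_y(z)=-\tfrac{a}{2}|z-y|^2+c_y$ gives you only
\[
F\bigl(D^2\varphi_y(x),\nabla\varphi_y(x)\bigr)=F(-aI,\nabla\varphi_y(x))\le a,
\]
but since $F$ is linear in the matrix argument and $F(I,p)>0$ for $|p|\le 1$, one has $F(-aI,p)=-aF(I,p)<0<a$ automatically. This inequality is \emph{vacuous}: it carries no information about $D^2v(x)$, and there is no way to extract $\|D^2v(x)\|\le C_*a$ from it together with $D^2v(x)\ge -aI$ and ellipticity. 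The implication you assert simply fails.

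What is actually needed is to test with a $C^2$ function whose Hessian at $x$ equals (or dominates in some direction) $D^2v(x)$ itself. The paper does this by first reducing, via inf-convolution, to the case where $v$ is semi-concave; Alexandrov's theorem then gives twice differentiability a.e., and at any such point $z\in A$ one tests with the perturbed second-order Taylor polynomial $P(\cdot;z)-\tfrac{\eps}{2}|\cdot-z|^2$, which touches $v$ from below and has gradient $Dv(z)$ of size $a|y-z|\le 2a$. This yields $F(D^2v(z)-\eps I,Dv(z))\le a$, and \emph{now} ellipticity together with $D^2v(z)\ge -aI$ forces every eigenvalue of $D^2v(z)$ to be $\le Ca$. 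Your final paragraph worries about whether $\varphi_y$ is an admissible test function, but that was never the obstacle; the obstacle is producing a test function that sees the full Hessian of $v$, and for this one cannot avoid an Alexandrov-type regularization (semi-concavity via inf-convolution, or equivalently passing to the convex envelope and bounding its Hessian at contact points by the same Taylor-polynomial argument).
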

\begin{proof}
Observe that since $v$ is a bounded lower semicontinuous function, then $A$ is a closed set. 

Assume that $v$ is semi-concave in $B_r(x_0)$, that is there exists $b>0$ such that $v-b|x|^2$ is concave in $B_r(x_0)$. Note that this implies in particular that $v$ is differentiable at all touching points $z\in A$. Moreover it is not difficult to show that $D v$ is Lipschitz in $A$ with a Lipschitz constant depending only on $a$ and $b$.
By assumption if $z\in A$ there exists a paraboloid of vertex $y\in B$ and opening $a$ as in \eqref{lemma2.11} touching $v$ at $z$ from below. Moreover,
\begin{equation}\label{lemma2.12}
y=z+\frac{1}{a}Dv(z)\,.
\end{equation}
Let us denote by $Z$ the set of points in $B_r(x_0)$ such that $v$ is twice differentiable at $z$. By Alexandrov theorem $|B_r(x_0)\setminus Z|=0$. Fix $z\in Z$.  For all $x\in B_r(x_0)$ we have
$$
v(x)=P(x;z)+o(|x-z|^2)=v(z)+Dv(z)\cdot(x-z)+\frac12(x-z)^TD^2v(z)(x-z)+o(|x-z|)^2\,.
$$
We claim that there exists a universal constant $C>0$ such that
$$
-aI\leq D^2v(z)\leq CaI\qquad\text{for all $z\in A\cap Z\,.$}
$$
The left inequality follows from the fact that the paraboloid in \eqref{lemma2.11} touches $v$ from below. To prove the second inequality assume that there exists a unit vector $e$ such that
$$
D^2v(z)\geq Ca\,e\otimes e-aI\,.
$$
We will prove that if $C$ is sufficiently large this leads to a contradiction.

Consider now the test function $\varphi(x)=P(x;z)-\displaystyle\frac{\eps}{2}|x-z|^2$, with $\eps>0$. Note that $\varphi$ lies below $v$ in a neighborhood of $z$ and touches $v$ at $z$ and  that by \eqref{lemma2.12} $|D\varphi(z)|=|Dv(z)|=a|y-z|\leq2a$. Therefore $\varphi$ is an admissible test function and we have
\begin{align*}
a
& \geq F(D^2\varphi(z),D\varphi(z))=F(D^2v(z)-\eps I,D\varphi(z))\geq F(Ca\,e\otimes e-(a+\eps)I,D\varphi(z)) \\
&= F(Ca\,e\otimes e,D\varphi(z))-(a+\eps)F(I,D\varphi(z))\geq Ca\lambda-(a+\eps)\tilde\lambda\sqrt{n}\,
\end{align*}
which is a contradiction if $C>(\tilde\lambda\sqrt{n}+1)/\lambda$ and $\eps$ is sufficiently small.

Now, from the area formula, using \eqref{lemma2.12} we obtain
\begin{equation}\label{lemma2.13}
|B|\leq\int_A\Bigl|{\rm det}\Bigl(I+\frac{1}{a}D^2v(x)\Big)\Big|\,dx\leq C|A|,
\end{equation}   
where $C$ is another universal constant. 

If $v$ is not semi-concave for $\eps>0$ we define the inf-convolution, setting for $x\in\overline B_r(x_0)$
$$
v_\eps(x)=\inf_{y\in\overline B_r(x_0)}\Big\{v(y)+\frac1\eps|y-x|^2\Big\}.
$$
It is easily checked that $v_\eps$ is semi-concave: Since  $v$ is  lower semicontinuous and bounded we have also that $v_\eps$ converges pointwise to $v$ in $\overline B_r(x_0)$. Moreover  each $v_\eps$ is a viscosity $(2a)$-supersolution of the equation \eqref{lemma2.10} in $B_r(x_0)$. In fact, if $\varphi\in C^2(B_r(x_0))$ lies below $v_\eps$ in a neighborhood of  $\overline x\in B_r(x_0)$ and touches $v_\eps$ at $\overline x$, let $\overline y$ the point in $\overline B_r(x_0)$ such that 
$$
\inf_{y\in\overline B_r(x_0)}\Big\{v(y)+\frac1\eps|y-\overline x|^2\Big\}=v(\overline y)+\frac1\eps|\overline y-\overline x|^2.
$$
Then the function
$$
\varphi(x+\overline x-\overline y)+v(\overline y)-\varphi(\overline x)
$$
touches $v$ from below at $\overline y$ and thus
$$
F(D^2\varphi(\overline x),D\varphi(\overline x))\leq a.
$$
Observe that for $\eps$ small enough the contact set $A_\eps$ of $v_\eps$ is contained in $B_r(x_0)$. Indeed if this is not true there exist a sequence $\eps_n$ converging to $0$ and points $x_n\in A_{\eps_n}$, the contact set of $v_{\eps_n}$, such that $x_n\in\partial B_r(x_0)$. Therefore there exist $y_n\in B$ and $c_n\in\R$ such that
$$
-\frac{a}{2}|x-y_n|^2+c_n\leq v_{\eps_n}(x)\,\,\,\forall x\in\overline B_r(x_0),\quad-\frac{a}{2}|x_n-y_n|^2+c_n= v_{\eps_n}(x_n).
$$
Since $B$ is closed and the $v_{\eps_n}$ are equibounded we may assume that $y_n\to \bar y\in B$, $c_n\to\bar c$ and $x_n\to\bar x\in\partial B_r(x_0)$. Thus, from the first inequality above we have that the paraboloid $-\frac{a}{2}|x-\bar y|^2+\bar c$ stays below $v$ in  $B_r(x_0)$. Moreover, by lower semicontinuity we have
\[
\begin{split}
-\frac{a}{2}|\bar x-\bar y|^2+\bar c&= \lim_{n\to\infty}v_{\eps_n}(x_n)\\
&=\lim_{n\to\infty}\Big(v(z_n)+\frac{1}{\eps_n}|z_n-x_n|^2\Big)\geq \liminf_{z\to \bar x} \geq v(z)\geq v(\bar x).
\end{split}
\]
Hence $\bar x\in A\cap\partial B_r(x_0)$ which is impossible by assumption. In particular we may assume that \eqref{lemma2.13} holds with $A$ replaced by $A_\eps$ for $\eps$ sufficiently small. 

Using the fact that $B$ is closed and the pointwise convergence of $v_\eps$ to $v$, a similar argument shows
$$
\bigcap_{h=1}^\infty\bigcup_{k=h}^\infty A_{1/k}\subset A.
$$
From the above inclusion and \eqref{lemma2.13} we then conclude that $|B|\leq C|A|$.
\end{proof}
Let $v:\overline B_1\to(0,\infty)$ be a lower semicontinuous function. If $a>0$ we denote by $A_a$ the set of points where $v$ can be touched from below by a paraboloid of opening $a$ and vertex in $\overline B_1$ and where the value of $v$ is also smaller than $a$.
\begin{align*}
A_a:
&=\Big\{x\in\overline  B_1:\,\,v(x)\leq a\,\,\text{and there exists}\,\,y\in\overline  B_1\,\,\text{such that}\\
&\qquad\qquad\qquad \min_{z\in\overline B_1}\Big(v(z)+\frac{a}{2}|z-y|^2\Big)=v(x)+\frac{a}{2}|x-y|^2\Big\}\,.
\end{align*}
\begin{lemma}\label{lemma2.2}
There exist two constants $c_1>0$, $C_1>1$,  with the following properties: Let $0<a<1/C_1$ and let $v:\overline B_1\to (0,\infty)$ be a viscosity $(C_1a)$-supersolution of the equation \eqref{lemma2.10}, bounded from above. If 
$$
\overline B_r(x_0)\subset B_1,\qquad A_a\cap\overline B_r(x_0)\not=\emptyset\,,
$$
then
$$
\big|A_{C_1a}\cap B_{r/8}(x_0)|\geq c_1|B_r|\,.
$$
\end{lemma}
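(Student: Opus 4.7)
The proof follows the ABP-type construction of Savin \cite{Savin}, adapted to the restricted $(C_1 a)$-viscosity setting. The plan is to identify a closed vertex set $B\subset\overline B_1$ with $|B|\geq c|B_r|$ (for a universal $c>0$) such that for every $y\in B$, sliding the paraboloid $Q_y(x):=-\tfrac{A}{2}|x-y|^2+d_y$ of opening $A:=C_1 a/2$ centered at $y$ upward until it touches $v$ from below on $\overline B_{r/8}(x_0)$ produces a contact point $x_y\in B_{r/8}(x_0)$ at which $v(x_y)\leq C_1 a$. The critical observation ensuring compatibility with the restricted viscosity hypothesis is that the gradient of $Q_y$ at any contact point equals $-A(x_y-y)$ and is hence bounded by $2A=C_1 a$, which is exactly the admissibility threshold in Definition~\ref{viscorestr} for a $(C_1 a)$-viscosity supersolution. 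Lemma~\ref{lemma2.1}, applied with opening $A$ (so the required supersolution property is in the $(2A)=(C_1 a)$-viscosity sense), then yields $|\{x_y:y\in B\}|\geq c_0|B|\geq c_1|B_r|$, and since these contact points lie in $A_{C_1 a}\cap B_{r/8}(x_0)$, the conclusion follows.

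Concretely: after translating so $x_0=0$, we record the touching data $\bar x\in A_a\cap \overline B_r$, vertex $\bar y\in\overline B_1$, constant $\bar c\in[0,3a]$, with $P(x):=-\tfrac{a}{2}|x-\bar y|^2+\bar c\leq v$ on $\overline B_1$ and $P(\bar x)=v(\bar x)\leq a$. The vertex set $B$ is taken to be a small ball of radius $\delta r$ (with $\delta>0$ universal) intersected with $\overline B_1$, positioned along the direction $\bar x-\bar y$ beyond $\bar x$, so that for each $y\in B$ the ``natural'' contact zone of $Q_y$ falls inside $B_{r/8}(0)$. For $y\in B$, set
\[
d_y:=\sup\{c:-\tfrac{A}{2}|x-y|^2+c\leq v(x)\text{ on }\overline B_{r/8}(0)\},
\]
which is attained by lower semicontinuity of $v$ at some $x_y\in\overline B_{r/8}(0)$.

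The heart of the argument is verifying, for every $y\in B$, the two claims: (a) $x_y\in B_{r/8}(0)$, and (b) $v(x_y)\leq C_1 a$. Claim (a) is obtained by comparing $Q_y$ with the global lower bound $P$ on $\partial B_{r/8}(0)$: the much larger opening $A$ makes $Q_y$ drop faster than $P$ away from its vertex, so for $C_1$ large enough one forces $Q_y<P\leq v$ on $\partial B_{r/8}(0)$, precluding boundary contact. Claim (b) follows by evaluating the inequality $Q_y\leq v$ at a control point where $v$ is bounded through $P(\bar x)\leq a$, yielding $d_y\leq C_1 a$ and hence $v(x_y)=d_y-\tfrac{A}{2}|x_y-y|^2\leq C_1 a$. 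I expect the principal obstacle to be the geometric calibration of $C_1$, $\delta$, and the placement of $B$ so that the boundary comparison in (a) holds uniformly over $y$ with a universal choice of constants; the ratio $1/8$ between the two balls is precisely the slack that makes this universal choice possible, and no additional work beyond that already encoded in Lemma~\ref{lemma2.1} is required to accommodate the $\kappa$-viscosity restriction.
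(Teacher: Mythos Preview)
Your proposal has a genuine gap: you skip the barrier step that the paper uses to localize a point of small value \emph{inside} $B_{r/16}(x_0)$, and without it neither of your claims (a) and (b) can be verified.

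Concretely, the only information you start with is a touching point $\bar x\in A_a\cap\overline B_r(x_0)$; this point can sit anywhere in $\overline B_r(x_0)$, in particular at distance almost $r$ from $x_0$ and hence outside $\overline B_{r/8}(x_0)$. Your bound on $d_y$ in claim (b) invokes ``$Q_y\le v$ at the control point $\bar x$'', but $d_y$ is by definition the largest constant with $Q_y\le v$ \emph{on $\overline B_{r/8}(x_0)$}, so the inequality $Q_y(\bar x)\le v(\bar x)$ is simply not available when $\bar x\notin\overline B_{r/8}(x_0)$. Without a bound on $d_y$ in terms of $a$, your comparison $Q_y<P$ on $\partial B_{r/8}(x_0)$ in claim (a) also collapses: if $v$ happens to be large (of order the global upper bound, not $a$) on $\overline B_{r/8}(x_0)$, then $d_y$ is large and $Q_y$ can easily exceed $P$ on the boundary sphere. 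The direction ``$\bar x-\bar y$ beyond $\bar x$'' of your vertex set does nothing to remedy this; for a paraboloid of large opening $A$ centered at $y$ near $\bar x$, the natural contact zone (even when $v=P$) is near $y$, not near $x_0$.

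The paper resolves this with a barrier: it adds to $P(\cdot;y_1)$ a radial function $ar^2\varphi((x-x_0)/r)$ with $\varphi(s)=\alpha^{-1}(|s|^{-\alpha}-1)$ for $|s|\ge 1/16$, constant on $|s|\le 1/16$, and checks that the resulting $\psi$ is a strict subsolution of \eqref{lemma2.10} in the annulus (with $|\nabla\psi|\le C_1a$, so this step is exactly where the $(C_1a)$-viscosity hypothesis is used). Since $v-\psi<0$ at $x_1$ and $v-\psi\ge 0$ on $\partial B_r(x_0)$, the minimum cannot be in the annulus, producing $z\in\overline B_{r/16}(x_0)$ with $v(z)\le P(z;y_1)+Car^2$. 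Only \emph{after} this localization does the paper run the sliding argument you describe, with the vertex ball $B$ centered at $z$ (not at $\bar x$); your claims (a) and (b) then go through because one now has a control point inside $\overline B_{r/8}(x_0)$.
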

\begin{proof}
Up to replace $r$ with $r+\eps$ and then letting $\eps\to0^+$ we may assume that there exists 
$$
x_1\in A_a\cap B_r(x_0)\,.
$$
Thus there exists $y_1\in\overline B_1$ such that the paraboloid
$$
P(x;y_1)=v(x_1)+\frac{a}{2}|x_1-y_1|^2-\frac{a}{2}|x-y_1|^2
$$
 touches $v$ in $x_1$ from below. We claim that there exists a universal constant $C$ such that there exists a point $z\in \overline B_{r/16}(x_0)$ such that
\begin{equation}\label{lemma2.20}
v(z)\leq P(z;y_1)+Car^2\,.
\end{equation}
If $x_1\in \overline B_{r/16}(x_0)$ then we may take trivially $z=x_1$. Otherwise, 
 we consider the function
\[
\varphi(x)=
\begin{cases}
\alpha^{-1}(|x|^{-\alpha}-1) & \quad \text{if}\,\,\,\displaystyle\frac{1}{16}\leq|x|\leq1, \cr
\alpha^{-1}(16^{\alpha}-1) & \quad \text{if}\,\,\,\displaystyle |x|\leq\frac{1}{16}\,,
\end{cases}
\]
with $\alpha>0$, universal, to be chosen.
Then, for $x\in B_r(x_0)$, we set 
$$
\psi(x)=P(x;y_1)+ar^2\varphi\Big(\frac{x-x_0}{r}\Big)\,.
$$
Note that $|D\psi|\leq a(16^{\alpha+1}+2)\leq1$   if $a$ is small enough and that in the annulus $B_r(x_0)\setminus \overline B_{r/16}(x_0)$ we have
\begin{equation}\label{lemma2.21}
F(D^2\psi,D\psi)= aF\Big(D^2\varphi\Big(\frac{x-x_0}{r}\Big),D\psi\Big)-aF(I,D\psi) 
 \geq a\lambda(\alpha+1)-a\tilde\lambda\sqrt{n}>a,
\end{equation}
provided $\alpha>0$ is chosen large enough. 

Let us now denote by $z$ a minimum point of $v-\psi$ in $\overline B_r(x_0)$. Since $v(x_1)-\psi(x_1)=-ar^2\varphi\big(\frac{x_1-x_0}{r})<0$, the minimum of $v-\psi$ is strictly negative. Therefore $z\not\in\partial B_r(x_0)$ since $v-\psi\geq0$ on $\partial B_r(x_0)$. On the other hand, if $r/16<|z-x_0|<r$, observing that $|D\psi|\leq a(16^{\alpha+1}+2)\leq C_1a$, if we choose $C_1$ large enough, we would have that $\psi$ is an admissible test function for $v$ in a neighborhood of $z$ satisfying \eqref{lemma2.21}, which is impossible. Therefore $z\in\overline B_{r/16}(x_0)$ and we have
$$
v(z)\leq\psi(z)= P(z;y_1)+ar^2\alpha^{-1}(16^\alpha-1)\,,
$$
thus proving \eqref{lemma2.20}.

To conclude the proof, consider for every $y\in \overline B_{r/64}(z)$ the paraboloid
$$
P(x;y_1)-C'\frac{a}{2}|x-y|^2+c_y\,,
$$
where $C'$ is a universal constant to be chosen and $c_y$ is such that the above paraboloid touches $v$ from below. Note that the above paraboloid has opening $(C'+1)a$ and vertex at
$$
\frac{y_1}{C'+1}+\frac{C'y}{C'+1}\,.
$$
Observe that
$$
P(z;y_1)-C'\frac{a}{2}|z-y|^2+c_y\leq v(z)\leq P(z;y_1)+Car^2\,,
$$
hence $c_y\leq Car^2+\frac{C'a}{2}\big(\frac{r}{64}\big)^2$.
On the other hand, if $|x-z|\geq\frac{r}{16}$, we have
\begin{align*}
P(x;y_1)-C'\frac{a}{2}|x-y|^2+c_y
& \leq P(x;y_1)-C'\frac{a}{2}\Big(\frac{3r}{64}\Big)^2+Car^2+\frac{C'a}{2}\Big(\frac{r}{64}\Big)^2 \\
& < P(x;y_1)\leq v(x)\,,
\end{align*}
provided that we choose $C'$ large enough independently of $a$ and $r$. Thus the contact point $x_y$ belongs to the ball $B_{r/16}(z)\subset B_{r/8}(x_0)$. Note that for $y\in \overline B_{r/64}(z)$ the vertex $\frac{y_1}{C'+1}+\frac{C'y}{C'+1}$ spans the ball with center $\frac{y_1}{C'+1}+\frac{C'z}{C'+1}$ and radius $C'r/64(C'+1)$, which is contained in $B_1$, provided that $C'$ is large enough. Moreover, the gradient of the function $x\mapsto P(x;y_1)-C'\frac{a}{2}|x-y|^2+c_y$ is smaller than $2(C'+1)a$ and 
$$
v(x_y)=P(x_y;y_1)-C'\frac{a}{2}|x_y-y|^2+c_y\leq a+2a+c_y\leq C''a 
$$
for a sufficiently large, universal, constant $C''>2(C'+1)a$. Therefore, by applying Lemma~\ref{lemma2.1} with $B_{r}(x_0)$ and $B$ replaced respectively by $B_{r/8}(x_0)$ and  the ball of radius $C'r/64(C'+1)$ and center $\frac{y_1}{C'+1}+\frac{C'z}{C'+1}$, with $a$ replaced by $C''a$,  we have that, if $3C''a\leq1$
$$
\big|A_{C''a}\cap B_{r/8}(x_0)\big|\geq c_0\Big(\frac{C'}{C'+1}\Big)^n\big|B_{r/64}|\,,
$$
from which \eqref{lemma2.2} follows with $C_1=3\max\{C'',16^{\alpha+1}+2\}$.
\end{proof}
We can now give the
\begin{proof}[Proof of Proposition~\ref{prop:savin}]
Let $x_0\in B_{1/2}$ be a point where $v(x_0)\leq\nu$.  Then the function $u(x)=v(x_0+x)$ is a positive viscosity $(C_0^k\nu)$-supersolution of \eqref{prop:savin1} in $\overline B_1$ with $u(0)\leq\nu$. Consider the paraboloid with vertex at the origin and opening $20\nu$ touching $u$ from below and observe that since $u$ is positive this paraboloid touches $u$ in $B_{1/3}$. Therefore $A_{20\nu}\cap\overline B_{1/3}\not=\emptyset$. Moreover, setting $D_0=A_{20\nu}\cap\overline B_{1/3}$, from Lemma~\ref{lemma2.2} we know that if $B_r(x)\subset B_1$, $B_{r/8}(x)\subset B_{1/3}$  and $D_0\cap B_r(x)\not=\emptyset$, then
$$
|(A_{20C_1\nu}\cap\overline B_{1/3})\cap B_{r/8}(x)|=|A_{20C_1\nu}\cap B_{r/8}(x)|\geq c_1|B_r|\,,
$$
provided $20C_1\nu\leq1$ and $v$, hence $u$, is a viscosity $(20C_1\nu)$-supersolution. By applying the same lemma to $D_1=A_{20C_1\nu}\cap\overline B_{1/3}$ and proceeding by induction, we have that the sets $D_0\subset D_1\subset\dots\subset D_k\subset B_{1/3}$, where $D_i=A_{20C_1^i\nu}\cap\overline B_{1/3}$ for $i=1,\dots,k$, have all the property that if $B_r(x)\subset B_1$, $B_{r/8}(x)\subset B_{1/3}$ and $D_i\cap B_r(x)\not=\emptyset$, for $i\leq k$, then
$$
|D_{i+1}\cap B_{r/8}(x)|\geq c_1|B_r|\,, 
$$
provided $20C_1^k\nu\leq1$ and $v$, hence $u$, is a viscosity $(20C_1^k\nu)$-supersolution. Then, using Lemma~2.3 in \cite{Savin} and setting $C_2=20C_1$, it follow that for a suitable $\mu$ depending only on $c_1$, hence universal,
$$
\big|\{x\in B_{1/3}(x_0):\,v(x)\leq C_2^k\nu\}\big|\leq(1-\mu^k)|B_{1/3}|\,.
$$
From this inequality the conclusion then follows by a standard rescaling and covering argument.
\end{proof}

\bibliographystyle{siam}
\bibliography{capillarity_obstacle19}

\begin{thebibliography}{10}

\bibitem{AmbrosioFuscoPallara00}
{\sc L.~Ambrosio, N.~Fusco, and D.~Pallara}, {\em {Functions of bounded
  variation and free discontinuity problems}}, {Oxford Mathematical
  Monographs}, The Clarendon Press, Oxford University Press, New York, 2000.

\bibitem{Dalphin}
{\sc J.~Dalphin}, {\em Uniform ball property and existence of optimal shapes
  for a wide class of geometric functionals}, Interfaces Free Bound., 20
  (2018), pp.~211--260.

\bibitem{De-PhilippisMaggi15}
{\sc G.~De~Philippis and F.~Maggi}, {\em Regularity of free boundaries in
  anisotropic capillarity problems and the validity of {Y}oung's law}, Arch.
  Ration. Mech. Anal., 216 (2015), pp.~473--568.

\bibitem{dm19}
{\sc M.~G. Delgadino and F.~Maggi}, {\em Alexandrov's theorem revisited}, Anal.
  PDE, 12 (2019), pp.~1613--1642.

\bibitem{DuzaarSteffen02}
{\sc F.~Duzaar and K.~Steffen}, {\em {Optimal interior and boundary regularity
  for almost minimizers to elliptic variational integrals}}, J. reine angew.
  Math., 564 (2002), pp.~73--138.

\bibitem{Fernandez-RealSerra20}
{\sc X.~Fern\'{a}ndez-Real and J.~Serra}, {\em Regularity of minimal surfaces
  with lower-dimensional obstacles}, J. Reine Angew. Math., 767 (2020),
  pp.~37--75.

\bibitem{Finnbook}
{\sc R.~Finn}, {\em Equilibrium capillary surfaces}, vol.~284 of Grundlehren
  der mathematischen Wissenschaften [Fundamental Principles of Mathematical
  Sciences], Springer-Verlag, New York, 1986.

\bibitem{focardi-spadaro20}
{\sc M.~Focardi and E.~Spadaro}, {\em How a minimal surface leaves a thin
  obstacle}, Ann. Inst. H. Poincar\'{e} Anal. Non Lin\'{e}aire, 37 (2020),
  pp.~1017--1046.

\bibitem{FFLM22}
{\sc I.~Fonseca, N.~Fusco, G.~Leoni, and M.~Morini}, {\em Global and local
  energy minimizers for a nanowire growth model}, Ann. Inst. H. Poincar\'e
  Anal.Non Lin\'eaire,  (2022), to appear.

\bibitem{Guillen09}
{\sc N.~Guillen}, {\em Optimal regularity for the {S}ignorini problem}, Calc.
  Var. Partial Differential Equations, 36 (2009), pp.~533--546.

\bibitem{Curiotto}
{\sc P.~Krogstrup, S.~Curiotto, E.~Johnson, M.~Aagesen, J.~Nyg{\aa}rd, and
  D.~Chatain}, {\em Impact of the liquid phase shape on the structure of iii-v
  nanowires}, Physical Review Letters, 106 (2011), 125505.

\bibitem{Maggi12}
{\sc F.~Maggi}, {\em {Sets of finite perimeter and geometric variational
  problems: an introduction to Geometric Measure Theory}}, vol.~135 of
  {Cambridge Studies in Advanced Mathematics}, Cambridge University Press,
  2012.

\bibitem{Misi}
{\sc E.~Milakis and L.~Silvestre}, {\em Regularity for the nonlinear
  {S}ignorini problem}, Adv. Math., 217 (2008), pp.~1301--1312.

\bibitem{MoMo00}
{\sc M.~G. Mora and M.~Morini}, {\em Functionals depending on curvatures with
  constraints}, Rend. Sem. Mat. Univ. Padova, 104 (2000), pp.~173--199.

\bibitem{Richardson78}
{\sc D.~J.~A. Richardson}, {\em V{ariational} {problems} {with} {thin}
  {obstacles}}, ProQuest LLC, Ann Arbor, MI, 1978.
\newblock Thesis (Ph.D.)--The University of British Columbia (Canada).

\bibitem{Savin}
{\sc O.~Savin}, {\em Small perturbation solutions for elliptic equations},
  Comm. Partial Differential Equations, 32 (2007), pp.~557--578.

\bibitem{Simon83}
{\sc L.~Simon}, {\em {Lectures on geometric measure theory}}, vol.~3 of
  {Proceedings of the Centre for Mathematical Analysis, Australian National
  University}, Australian National University Centre for Mathematical Analysis,
  Canberra, 1983.

\bibitem{Taylor77}
{\sc J.~E. Taylor}, {\em Boundary regularity for solutions to various
  capillarity and free boundary problems}, Comm. Partial Differential
  Equations, 2 (1977), pp.~323--357.

\end{thebibliography}

\end{document}